\theoremstyle{plain}
\newtheorem{theorem}{Theorem}
\newtheorem{corollary}[theorem]{Corollary}
\newtheorem{lemma}[theorem]{Lemma}
\newtheorem{proposition}[theorem]{Proposition}
\theoremstyle{definition}
\newtheorem{remark}[theorem]{Remark}
\newtheorem*{remark*}{Remark}
\newcommand{\N}{\mathbb N}
\newcommand{\pr}{\mathbf P}
\newcommand{\e}{\mathbf E}
\newcommand{\E}{\mathbf E}
\newcommand{\rf}{\rfloor}
\newcommand{\lf}{\lfloor}
\newcommand{\y}{\hat{y}}
\begin{document}
\title[Expansions for normal deviations]
    {Asymptotic expansions for normal deviations of random walks conditioned to stay positive}

\thanks{Alexander Tarasov and Vitali Wachtel were supported by the DFG}

\author[Denisov]{Denis Denisov}
\address{Department of Mathematics, University of Manchester, UK}
\email{denis.denisov@manchester.ac.uk}

\author[Tarasov]{Alexander Tarasov}
\address{Faculty of Mathematics, Bielefeld University, Germany}
\email{atarasov@math.uni-bielefeld.de}

\author[Wachtel]{Vitali Wachtel}
\address{Faculty of Mathematics, Bielefeld University, Germany}
\email{wachtel@math.uni-bielefeld.de}
\begin{abstract}
We consider a one-dimensional random walk $S_n$ having i.i.d. increments with zero mean and finite variance. We continue our study of asymptotic expansions for local probabilities $\mathbf P(S_n=x,\tau_0>n)$, which has been started in \cite{DTW23}. Obtained there expansions make sense  in the zone $x=o(\frac{\sqrt{n}}{\log^{1/2} n})$ only. In the present paper we derive an alternative expansion, which deals with $x$ of order $\sqrt{n}$. 
\end{abstract}
    
    
    \keywords{Random walk, exit time, asymptotic expansion}
    \subjclass{Primary 60G50; Secondary 60G40, 60F17}
    \maketitle
    {\scriptsize
    }
\section{Introduction.}
Let  $\{X_k\}$ be a sequence of  independent random variables with zero mean, $\e X_1=0$, and finite variance, $\sigma^2:=\e X_1^2\in(0,\infty)$. 
Consider a random walk 
 $\{S_n; n\ge0\}$ defined as follows, 
 $S_0=0$ and 
$$
S_n:=X_1+X_2+\ldots+X_n,\ n\ge1.
$$
Let $\tau$ be the first time when $S(n)$ becomes non-positive, that is,
$$
\tau:=\inf\{n\ge1:S_n\le 0\}.
$$
In \cite{DTW23} we have shown that if $\E|X_1|^{r+3}$ is finite for some integer $r\ge1$ then 
\begin{align} 
\label{eq:locprobtau}
    \pr(S_n=x,\tau>n)
=
    \sum_{j=1}^{\lf \frac{r+1}{2}\rf} 
        \frac{U_j(x)}{(n+1)^{j+1/2}}
+
    O\left(
        \frac{(1+x)^{r+1}}{n^{(r+3)/2}}\log^{{\lf \frac{r}{2}\rf}}n
    \right),
\end{align}
where $U_j$ are polyharmonic functions for $S_n$ killed at leaving $(0,\infty)$. A remarkable feature of this expansion is the fact that the coefficients $U_j$ are not polynomials, as it is always the case in expansions in the classical central limit theorem. Every $U_j$ is only asymptotically polynomial, see Theorem 6 in \cite{DTW23}. This result implies also that 
\begin{align}
\label{eq:U-bound}
|U_j(x)|\le C_j(1+x^{2j-1}),\quad x>0.
\end{align}
Using this estimate one can compare regular summands in \eqref{eq:locprobtau} with the remainder there. One concludes then that the expansion becomes useless for 
$x\ge \frac{\sqrt{n}}{\log^{1/2}n}$. More precisely, for such values of $x$ we only have 
$$
\pr(S_n=x,\tau>n)=
O\left(
        \frac{(1+x)^{r+1}}{n^{(r+3)/2}}\log^{{\lf \frac{r}{2}\rf}}n
    \right).
$$
Thus the case of 'large' values of $x$ should be studied additionally.
\begin{theorem}
\label{thm:lattice}
Assume that $\e|X|^{r+3}$ is finite for some integer $r \ge 1$.
Assume also that $S_n$ is integer valued with maximal span $1$. Then there exist polynomials $P_1$, $P_2$,\ldots, $P_{r+1}$ such that
\begin{align*}
    \pr(S_n=x,\tau>n) 
=
    e^{-\frac{x^2}{2\sigma^2n}}
    \sum_{\nu = 2}^{r+1}
        \frac{P_\nu(x/\sigma\sqrt{n})}{n^{\nu/2}} 
+
    O\left( \frac{1}{\min (n^{(r+2)/2}, x^{r+2})} \right).
\end{align*}
Every polynomial $P_\nu$ has degree $3\nu-5$.
\end{theorem}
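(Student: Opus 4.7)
The plan is to use a Markov splitting at an intermediate time $m$, satisfying $\sqrt{m}\to\infty$ and $m/n\to 0$ (a natural first try is $m=\lfloor n^{\alpha}\rfloor$ for some $\alpha\in(0,1)$), in order to bridge \eqref{eq:locprobtau} with the Gaussian zone. Writing
\begin{equation*}
\pr(S_n=x,\tau>n)=\sum_{y>0}\pr(S_m=y,\tau>m)\,\pr_y(S_{n-m}=x,\tau_y'>n-m),
\end{equation*}
I would apply \eqref{eq:locprobtau} to the first factor on the range $y=o(\sqrt{m}/\log^{1/2}m)$ where it is valid, and absorb the contribution of the larger $y$ into the error via $\pr(S_m=y,\tau>m)\le\pr(S_m=y)$ and standard tail estimates on $S_m$. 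For the second factor, with $y$ small and $x\sim\sqrt{n}$ far from the boundary, the survival constraint is only a small perturbation: one can expand
\begin{equation*}
\pr_y(S_{n-m}=x,\tau_y'>n-m)=\pr(S_{n-m}=x-y)-\pr_y(S_{n-m}=x,\tau_y'\le n-m),
\end{equation*}
treat the free probability by the classical Edgeworth expansion (which produces the Gaussian factor $e^{-(x-y)^2/(2\sigma^2(n-m))}$ together with polynomial corrections in $(x-y)/\sqrt{n-m}$), and treat the subtracted term by the strong Markov property at $\tau_y'$ and the known overshoot distribution, recasting it through time-reversal as a further instance of the small-endpoint problem already handled by \eqref{eq:locprobtau}.

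Once both factors are expanded, the right-hand side becomes a double sum whose generic term is a product of $U_j(y)/m^{j+1/2}$ with a Hermite-type polynomial in $(x-y)/\sqrt{n-m}$ multiplied by $e^{-(x-y)^2/(2\sigma^2(n-m))}$. By Theorem 6 of \cite{DTW23}, $U_j$ is asymptotically polynomial of degree $2j-1$, so the $y$-sum is — up to controlled errors — a Gaussian moment against a polynomial and can be evaluated in closed form by completing the square. The two Gaussian factors in $y$ combine to give $e^{-x^2/(2\sigma^2 n)}$ (and the $m$-dependence should cancel out, as it must), while the polynomial prefactors assemble into $P_\nu(x/\sigma\sqrt{n})$. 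The declared degree $3\nu-5$ then reflects combining the $(2j-1)$ from $U_j$ with the $3k$ from the $k$-th Edgeworth Hermite polynomial under a matching constraint $j+k=\nu+O(1)$, with the shift accounting for the fact that the expansion starts at $\nu=2$.

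The main obstacle will be controlling the non-polynomial part of each $U_j$ (the remainder left after subtracting its leading polynomial, which by \cite{DTW23} decays in $y$) and showing that its Gaussian-weighted contribution fits inside the two-regime error $O(\min(n^{-(r+2)/2},x^{-(r+2)}))$. The $x^{-(r+2)}$ regime is the sharper of the two and will likely require exploiting the explicit factor $e^{-x^2/(2\sigma^2 n)}$ to extract additional decay when $x\gg\sqrt{n}$; a related technical step is the separate handling of the regimes $x\le\sqrt{n}$ and $x\ge\sqrt{n}$ needed to produce the $\min$. A further bookkeeping obstacle is checking that the asymptotic-polynomial remainder estimates from \cite{DTW23} propagate cleanly through the Gaussian convolution, so that the truncation of the $y$-range at $\sqrt{m}/\log^{1/2}m$ does not spoil the final error.
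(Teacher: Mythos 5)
There is a genuine gap, and it is located at the very first step. In your splitting $\pr(S_n=x,\tau>n)=\sum_{y>0}\pr(S_m=y,\tau>m)\,\pr_y(S_{n-m}=x,\tau_y'>n-m)$ at a fixed intermediate time $m$, the law of $S_m$ conditioned on $\{\tau>m\}$ concentrates at the spatial scale $\sqrt{m}$: one has $\pr(S_m=y,\tau>m)\asymp y\,m^{-3/2}e^{-y^2/2\sigma^2 m}$, so the range $y=o(\sqrt{m}/\log^{1/2}m)$ on which \eqref{eq:locprobtau} is usable carries only an $O(1/\log m)$ fraction of the mass. Quantitatively, for $x\asymp\sqrt n$ the region $y\asymp\sqrt m$ contributes an amount of order $\sqrt m\cdot m^{-1}\cdot\sqrt m/n\asymp n^{-1}$, which is the order of the main term $x n^{-3/2}e^{-x^2/2\sigma^2n}$ itself, while the region you keep contributes only $O\bigl(n^{-1}\log^{-3/2}m\bigr)$. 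So the part you propose to ``absorb into the error via $\pr(S_m=y,\tau>m)\le\pr(S_m=y)$'' in fact contains essentially all of the answer, and no choice of $m$ fixes this. A second, related gap: even on the kept range, the starting point $y$ of the second factor tends to infinity, and an expansion for $\pr_y(S_{N}=x,\tau_y'>N)$ with a general (growing) starting point is not available from \eqref{eq:locprobtau} and is not obtained by the time-reversal you sketch --- this is precisely the generalization the authors defer to future work.

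This is why the paper splits at the stopping time $\tau$ rather than at a deterministic time: from \eqref{eq:reflection} and \eqref{eq:tau-decomp}, the spatial variable at the splitting point is the overshoot $-S_\tau=y$, whose distribution is controlled by the moments of $X$ uniformly in $k$ (Lemma~\ref{lem:sumovery}), so the sum over $y$ is effectively localized and \eqref{eq:locprobtau} enters only through the moment sums $\Theta_k^{(h)}=\sum_y(y/\sigma)^h\pr(S_k=-y,\tau=k)$, for which only small values of $S_{k-1}$ matter. The remaining work is then the summation over $k$ (Proposition~\ref{prop:basisconvolution}) and --- a subtlety your sketch does not anticipate --- the elimination of the negative powers of $x/\sqrt n$ that this summation produces, which the paper achieves by comparing the resulting expansion of $\pr(S_n=x,\tau<n)$ with the unconditional Edgeworth expansion on the negative half-line $x\le 0$. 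If you want to salvage your outline, you would have to replace the first factor by a local limit expansion valid uniformly for $y\asymp\sqrt m$, i.e.\ by the statement of Theorem~\ref{thm:lattice} itself, which makes the argument circular.
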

\begin{theorem}\label{th:absolute}
Assume that $\e|X|^{r+3}$ is finite for some integer $r \ge 1$.
Assume also that $X$ is absolutely continuous and that its density is bounded. Then there exist polynomials $P_1$, $P_2$,\ldots, $P_{r+1}$ such that
\begin{align*}
    \frac{d}{dx}\pr(S_n\le x,\tau>n) 
=
    e^{-\frac{x^2}{2\sigma^2n}}
    \sum_{\nu = 2}^{r+1}
        \frac{P_\nu(x/\sigma\sqrt{n})}{n^{\nu/2}} 
+
    O\left( \frac{1}{\min (n^{(r+2)/2}, x^{r+2})} \right).
\end{align*}
Every polynomial $P_\nu$ has degree $3\nu-5$.
\end{theorem}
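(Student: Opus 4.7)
The plan is to run the proof in parallel with that of Theorem~\ref{thm:lattice}, with only one genuinely new ingredient: every appeal in the lattice case to a Stone-Gnedenko type local limit theorem is replaced by the density-form Edgeworth expansion of $p_n$, and the bounded-density hypothesis on $X$ is used only at that point. Setting $p_n^+(x):=\tfrac{d}{dx}\pr(S_n\le x,\tau>n)$, I would first establish the time-reversal identity
\begin{equation*}
    p_n^+(x) \;=\; p_n(x)\,\pr\bigl(M_{n-1}<x\,\big|\,S_n=x\bigr),\qquad M_{n-1}:=\max_{1\le k\le n-1}S_k,
\end{equation*}
which converts the staying-positive constraint into a ``terminal value equals the running maximum'' constraint for an unconditioned walk. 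Splitting the trajectory at the last time the maximum is attained (Wiener-Hopf / Sparre-Andersen decomposition) rewrites the conditional probability on the right as a convolution of a strict ascending-ladder density with the probability of a strict descending excursion of the complementary length.

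Into this convolution I would substitute asymptotic expansions of order $r$: the classical density-form Edgeworth expansion
\begin{equation*}
    p_m(y) \;=\; \frac{1}{\sigma\sqrt m}\,\varphi(y/\sigma\sqrt m)\Bigl(1+\sum_{j=1}^{r}\frac{Q_j(y/\sigma\sqrt m)}{m^{j/2}}\Bigr)+O\bigl(m^{-(r+1)/2}\bigr),
\end{equation*}
valid under $\e|X|^{r+3}<\infty$ with bounded density, together with the companion Wiener-Hopf-Spitzer expansion in powers of $1/\sqrt n$ for the strict descending-ladder generating function. Explicit evaluation of the resulting Gaussian-against-Hermite convolutions, followed by regrouping of contributions at each order $n^{-\nu/2}$, should then produce the candidate $e^{-x^2/(2\sigma^2 n)}\sum_{\nu=2}^{r+1}P_\nu(x/\sigma\sqrt n)/n^{\nu/2}$. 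The degree $3\nu-5$ is forced by tracking Hermite-polynomial degrees through the convolution (the $x$ factor produced by the ascending-ladder integration contributes one extra power beyond the raw Edgeworth polynomial). The polynomials $P_\nu$ can moreover be cross-checked by restricting to the overlap zone $1\ll x\ll \sqrt n/\log^{1/2}n$, Taylor-expanding the Gaussian factor, and matching the result against \eqref{eq:locprobtau} via the asymptotic polynomial structure of the $U_j$ furnished by Theorem~6 of \cite{DTW23}.

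The main technical obstacle is the uniform control of the remainder, so as to obtain the two-regime bound $O\bigl(\min(n^{-(r+2)/2},x^{-(r+2)})\bigr)$ across the full range $x>0$. In the intermediate zone $x\asymp\sqrt n$ the Gaussian envelope $e^{-x^2/(2\sigma^2 n)}$ must absorb the polynomial factors $x^{r+1}$ that accumulate from convolving Edgeworth remainders, which will require splitting the convolution domain into typical and atypical parts and using either the Gaussian tail or a pointwise large-deviation bound on $p_n^+$ in the latter. In the far regime $x\gg\sqrt n$ the sharper bound $O(x^{-(r+2)})$ must come from directly estimating the tail of $p_n^+$ using $\e|X|^{r+3}<\infty$ and the boundedness of the density of $X$ (needed here precisely to keep the tail estimate pointwise rather than merely integrated). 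Managing these cross terms carefully---showing that no polynomial blow-up escapes the Gaussian factor, and that the descending-ladder correction series does not spoil the remainder---is where I expect the bulk of the work to lie.
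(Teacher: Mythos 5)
Your proposal departs from the paper's architecture entirely. The paper proves Theorem~\ref{th:absolute} by the same route as Theorem~\ref{thm:lattice}: the reflection identity \eqref{eq:reflection}, the strong Markov decomposition \eqref{eq:tau-decomp} over the time and position of the first entry into $(-\infty,0]$, the lower-deviation expansion \eqref{eq:locprobtau} of \cite{DTW23} to expand $\pr(S_k=-y,\tau=k)$ (Lemma~\ref{lem:sumovery}), Euler--Maclaurin summation over $k$ (Proposition~\ref{prop:basisconvolution}), and finally the observation that the resulting expansion holds for \emph{all} $x$, including $x\le0$, which forces the non-polynomial terms to cancel (Section 5). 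In the absolutely continuous case the only changes are sums over $y$ becoming integrals and local probabilities becoming densities. Your time-reversal identity converts $\frac{d}{dx}\pr(S_n\le x,\tau>n)$ into the density of a strict ascending ladder event at time $n$, which is precisely Nagaev's dual object; the introduction explicitly warns that this route, while plausible for existence, obscures the computation of the coefficients.

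Beyond the difference in route, there are concrete gaps. First, on the event $\{M_{n-1}<S_n\}$ the maximum of the reversed path is attained at the terminal time, so the ``split at the last maximum'' decomposition you invoke trivializes: what you actually face is the bivariate renewal density $\sum_k\pr(T_1+\cdots+T_k=n,\,H_1+\cdots+H_k\in dx)$ of ladder epochs and heights, where the epoch law has infinite mean ($\pr(T_1>n)\sim cn^{-1/2}$). The ``Wiener--Hopf--Spitzer expansion in powers of $1/\sqrt n$'' you cite as a companion input is not an off-the-shelf result at order $r$ with the required uniformity; establishing it is essentially equivalent to the expansion \eqref{eq:locprobtau} for the joint law of $(\tau,S_\tau)$ plus all of the summation machinery of Sections 2 and 4, i.e., to most of the theorem. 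Second, convolving Gaussian Edgeworth terms against $k^{-3/2}$-type ladder tails inevitably produces terms with \emph{negative} powers of $x/\sqrt n$ in front of $e^{-x^2/2\sigma^2n}$ (see the functions $Q_{j,\ell,m}$ in Proposition~\ref{prop:basisconvolution}); your plan offers no mechanism for their cancellation. The overlap-zone matching against \eqref{eq:locprobtau} that you propose is only a consistency check on the polynomial part and cannot establish that these singular terms vanish --- in the paper this is the key step of Section 5 and relies on comparing with the unconditional expansion for $x\le0$, a comparison unavailable in your formulation since the ladder density is only defined for $x>0$. Finally, the degree claim $\deg P_\nu=3\nu-5$ is not a routine count of Hermite degrees; it requires the optimization carried out in Lemma~\ref{lem:degree}.
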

\begin{remark*}
    Assume that $\E e^{\lambda|X|}$ is finite for some $\lambda > 0$ then by \cite[Theorem 6]{DTW23} there are polynomials $Q_j$ of degree $2j-1$ such that for some $\varepsilon > 0$
\begin{align*}
    U_j(x) 
=
    Q_j(x) 
+
    O(e^{-\varepsilon x})
\end{align*}
where $U_j(s)$ are the functions from decomposition \eqref{eq:locprobtau}. Since in this case $U_j$ is almost a polynomial it is natural to compare their polynomial parts $Q_j$ with polynomials $P_\nu$ that were obtained in Theorem \ref{thm:lattice}. This comparison allows one to express coefficients of $Q_j$ via the coefficients of the polynomials $P_\nu$ for 
$\nu \le 2j+1$, for details see Section~\ref{sec:7} below.
\end{remark*}
These expansions can be seen as a refinement of the local limit theorem proven by Caravenna~\cite{Caravenna05}. His result for lattice walks reads as follows:
$$
\pr(S_n=x,\tau>n)
\sim c_0\frac{x}{\sigma^2n^{3/2}}e^{-\frac{x^2}{2\sigma^2n}}.
$$
Comparing this with the claim in Theorem~\ref{thm:lattice}, we conclude that 
$$
P_2(t)=c_0\frac{t}{\sigma}.
$$
In the last section we give a direct proof of this equality and describe the constant $c_0$ in terms of the harmonic function $U_1(x)$, see Corollary~\ref{cor:r=2} below.

Compared to \eqref{eq:locprobtau}, expansions in Theorems~\ref{thm:lattice} and \ref{th:absolute} have a more standard form as they contain polynomials of $x/\sqrt{n}$. This can be explained by the fact that if $x$ is of order $\sqrt{n}$ then we have a more universal type of behaviour than closer to the boundary. A more rigorous explanation is given by Theorem~6 in \cite{DTW23}, where we have shown that polyharmonic functions $U_j$ behave asymptotically as polynomials.

These expansions are quite similar to expansions derived in Nagaev~\cite{Nagaev70} for the running maximum of the walk $S_n$. His approach is based on a careful analysis of the Fourier transforms. We apply a completely different method, which does not use Fourier transforms. The starting point in our analysis is the following obvious equality 
\begin{align}
\label{eq:reflection}
\pr(S_n=x,\tau>n)=\pr(S_n=x)-\pr(S_n=x,\tau< n).
\end{align}
Since the first probability term does not contain 
$\tau$, we can apply the classical expansions for local probabilities in the central limit theorem, see Chapter VI in Petrov~\cite{Petrov}. To analyse the second probability on the right-hand side of \eqref{eq:reflection} we apply the strong Markov property to get 
\begin{equation}
\label{eq:tau-decomp}
\pr(S_n=x,\tau< n)=
\sum_{k=1}^{n-1}\sum_{y=0}^\infty\pr(S_k=-y,\tau=k)
\pr(S_{n-k}=x+y).
\end{equation}
To deal with the summands, we need to derive expansions for $\pr(S_k=-y,\tau=k)$. It turns out that such expansions can be derived from \eqref{eq:locprobtau}. Thus, we use the known results on probabilities of lower deviations ($x\ll\sqrt{n}$) to study the probabilities of normal deviations ($x\approx\sqrt{n}$).

We believe that one can also use Nagaev's method to show the existence of appropriate polynomials $P_j$.
But his approach is purely qualitative: it allows to prove the existence of that polynomials but it remains unclear how to compute their coefficients. Our approach allows one to compute the coefficients. We illustrate this by the following result, where the polynomials $P_2$ and $P_3$ are computed.
\begin{corollary}
\label{cor:r=2}
Assume that the conditions of Theorem~\ref{thm:lattice} hold with $r=2$. Then
\begin{align*}
&P_2(t)=\frac{2\theta_0}{\sigma}t\\
&P_3(t)=\frac{\theta_0m_3}{3\sigma^3}(t^4-5t^2+2)
+\frac{2\theta_1}{\sigma}(t^2-1),
\end{align*}
where $m_3:=\e X^3$ and 
\begin{align*}
&\theta_0=\sum_{u=1}^\infty U_1(u)\pr(X<-u),\\
&\theta_1:=\sum_{u=1}^\infty U_1(u)\e[(-X-u);X<-u].
\end{align*}
\end{corollary}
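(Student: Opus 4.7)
The plan is to specialize the proof of Theorem~\ref{thm:lattice} to the case $r=2$ and track the constants explicitly. Combining \eqref{eq:reflection} with \eqref{eq:tau-decomp} and applying the strong Markov property at time $k-1$, we have
\begin{align*}
\pr(S_k=-y,\tau=k)=\sum_{u\ge1}\pr(S_{k-1}=u,\tau>k-1)\pr(X=-u-y).
\end{align*}
Into the factor $\pr(S_{k-1}=u,\tau>k-1)$ we substitute the expansion \eqref{eq:locprobtau} with $r=3$, giving $U_1(u)/k^{3/2}+U_2(u)/k^{5/2}$ plus a remainder; into $\pr(S_{n-k}=x+y)$ we substitute the classical Edgeworth expansion (Petrov, Chapter~VI) retaining the $m_3$-correction.

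Writing $\phi_m(z):=(\sigma\sqrt{2\pi m})^{-1}e^{-z^2/(2\sigma^2 m)}$ and setting $w:=-X-u\ge0$ on the event $X\le-u$, we Taylor-expand $\phi_{n-k}(x+w)$ about $(n,x)$ as a double series in $(k,w)$. The heat equation $\partial_n\phi_n=\tfrac{\sigma^2}{2}\partial_x^2\phi_n$ converts every $n$-derivative to a spatial one, and the identity $\partial_x^j\phi_n(x)=(-1)^j\phi_n(x)H_j(x/\sigma\sqrt n)/(\sigma\sqrt n)^j$ (probabilists' Hermite polynomials $H_j$) expresses each resulting term as $e^{-x^2/(2\sigma^2 n)}$ times a polynomial in $x/\sigma\sqrt n$ divided by a power of $n$. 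Summing first in $y\ge0$ converts the discrete weights $\pr(X=-u-y)$ into moments of the form $\E[w^j;X\le -u]$; since $w=0$ on the atom $X=-u$ these moments coincide with $\E[(-X-u)^j;X<-u]$ for $j\ge1$, which is how the quantity $\theta_1$ enters naturally. The leading $\phi_n(x)$-piece cancels against $\pr(S_n=x)$ in \eqref{eq:reflection} via the recurrence identity $\sum_k\pr(\tau=k)=1$, and what remains is distributed across orders $n^{-1}$ and $n^{-3/2}$.

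The main technical obstacle is the treatment of sums that diverge at leading order, most notably $\sum_k k\,\pr(\tau=k)$. Truncating at $k=n-1$ gives $\sum_{k=1}^{n-1}k\,\pr(\tau=k)\sim 2\theta_0\sqrt n$; combined with $\phi_n''(x)$ this produces the coefficient $(2\theta_0/\sigma)t$ of $P_2$, which matches Caravenna's constant via $c_0=2\theta_0$. For $P_3$ three further contributions combine at order $n^{-3/2}$: the $m_3$ Edgeworth correction, which after Hermite manipulation yields the $\frac{\theta_0 m_3}{3\sigma^3}(t^4-5t^2+2)$ piece; the next Taylor terms in $w$ paired with the $k^{-3/2}$ factor and the first-moment $\E[-X-u;X<-u]$, producing $\frac{2\theta_1}{\sigma}(t^2-1)$; and the $U_2/k^{5/2}$-correction together with subleading Edgeworth pieces, which must be shown to cancel or be absorbed into the previous two terms via the polyharmonic relation satisfied by $U_2$. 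The careful bookkeeping of these divergent-sum truncations and of the intertwining cancellations is the heart of the computation.
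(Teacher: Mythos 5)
Your plan re-derives the expansion from scratch for $r=2$ starting from \eqref{eq:reflection} and \eqref{eq:tau-decomp}; the paper's own proof of the corollary is instead a pure plug-in into the closed formula \eqref{eq:qgeneral} already established in the proof of Theorem~\ref{thm:lattice} (list the admissible index tuples for $\eta=2,3$, insert $a_{0,0},a_{1,1},a_{3,2}$ and $b_0^{(0)},b_0^{(1)}$, and evaluate $Q_{1,0,0}=-t$, $Q_{1,1,1}=1-t^2$, $Q_{1,2,3}=-t^4+2t^2+1$). Your identification of where $\theta_0$ and $\theta_1$ enter (overshoot moments weighted by the harmonic function, as in Lemma~\ref{lem:sumovery}) is correct in spirit, but the analytic core of your computation does not work.

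The decisive gap is the step ``Taylor-expand $\phi_{n-k}(x+w)$ about $(n,x)$ as a double series in $(k,w)$.'' In the variable $k$ this is not admissible over the range $k\in[1,n-1]$: since $\pr(\tau=k)\sim\theta_0k^{-3/2}$, the truncated moments $\sum_{k\le n}k^j\pr(\tau=k)$ grow like $n^{j-1/2}$, which exactly compensates the factor $n^{-j}$ carried by $\partial_n^j\phi_n$, so \emph{every} order $j\ge1$ of the $k$-expansion contributes at the same order $n^{-1}$; no finite truncation yields the answer, and one must evaluate the full convolution $\sum_k k^{-3/2}(n-k)^{-\ell-1/2}e^{-z^2/(2(n-k))}$, which is precisely what Proposition~\ref{prop:basisconvolution} and the Beta-type integral of Lemma~\ref{prop:integral} accomplish. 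A concrete symptom: your proposed source of $P_2$, namely $\sum_{k<n}k\,\pr(\tau=k)\sim 2\theta_0\sqrt n$ paired with $\phi_n''(x)$, is proportional to $(t^2-1)e^{-t^2/2}$, an even function of $t=x/\sigma\sqrt n$, whereas $P_2(t)=2\theta_0t/\sigma$ is odd; the odd profile arises from the ${\rm sgn}(z)$ factor in the resummed integral of Lemma~\ref{prop:integral}, which no finite Taylor polynomial in $k$ can produce. A second, related gap: the cancellations you defer (``must be shown to cancel or be absorbed'') are not peripheral. All polynomial-in-$t$ contributions of $\pr(S_n=x,\tau<n)$, at every order and not only the leading $\phi_n(x)$ piece, must cancel against the Edgeworth expansion of $\pr(S_n=x)$; the paper proves this indirectly by comparing the two expansions for negative $x$ in Section~5, which is what yields $P_\nu=-2Q_\nu$ and makes the explicit evaluation of $P_2$ and $P_3$ feasible. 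Without that argument (or an explicit verification of these cancellations), your computation cannot be completed.
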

In the proof of Corollary~\ref{cor:r=2} we also show that the constants $\theta_0$ and $\theta_1$ can be expressed in the following way:
\begin{equation}
\label{eq:theta0}
\theta_0=\lim_{k\to\infty} k^{3/2}\pr(\tau=k)
\end{equation}
and
\begin{equation}
\label{eq:theta1}
\theta_1=\lim_{k\to\infty} k^{3/2}\e[-S_\tau;\tau=k].
\end{equation}

One can use our asymptotic expansions to obtain a rate of convergence in the limit theorem for random walks conditioned to stay positive.
Assume that the conditions of Theorem~\ref{thm:lattice} hold with $r=1$.
Then we have
$$
\pr(S_n=x,\tau>n)=2\theta_0\frac{x}{\sigma^2n^{3/2}}
e^{-\frac{x^2}{2\sigma^2n}} 
+O\left(\frac{1}{\min\{n^{3/2},x^3\}}\right).
$$
Then, uniformly in $x\in[u\sigma\sqrt{n},v\sigma\sqrt{n}]$,
$$
\pr(S_n=x,\tau>n)=2\theta_0\frac{x}{\sigma^2n^{3/2}}
e^{-\frac{x^2}{2\sigma^2n}} 
+O\left(n^{-3/2}\right),
$$
where $0<u<v$ are fixed numbers. Consequently,
\begin{align*}
    \pr
    \left(\frac{S_n}{\sigma\sqrt{n}}\in[u,v],\tau>n\right)
&=
    \sum_{x\in[u\sigma\sqrt{n},v\sigma\sqrt{n}]}
        \pr(S_n=x,\tau>n)\\
&=
    \frac{2\theta_0}{\sqrt{n}}
    \sum_{x\in[u\sigma\sqrt{n},v\sigma\sqrt{n}]}
        \frac{x}{\sigma\sqrt{n}}
        e^{-\frac{x^2}{2\sigma^2n}} 
        \frac{1}{\sigma\sqrt{n}}
+O\left(n^{-1}\right).
\end{align*}
Approximating the sum by the integral, we conclude that 
\begin{equation}
\label{eq:BE1}
    \pr\left(\frac{S_n}{\sigma\sqrt{n}}\in[u,v],\tau>n\right)
=
    \frac{2\theta_0}{\sqrt{n}}\left(e^{-u^2/2}-e^{-v^2/2}\right)
+
    O\left(n^{-1}\right).
\end{equation}
According to Theorem 1 in \cite{DTW23},
$$
\pr(\tau>n)=\frac{2\theta_0}{\sqrt{n}}+O(n^{-1}).
$$
Combining this with \eqref{eq:BE1}, we get
\begin{equation}
\label{eq:BE2}
\pr\left(\frac{S_n}{\sigma\sqrt{n}}\in[u,v]\,\Big|\tau>n\right)
=e^{-u^2/2}-e^{-v^2/2}+O(n^{-1/2}).
\end{equation}
Since the remainder term depends on $u$ and $v$, this estimate is much weaker than the standard Berry-Esseen inequality. Furthermore, \eqref{eq:BE2} is derived under the assumption that the fourth moment is finite.  
A stronger version of~\eqref{eq:BE2}, which is an analogue of the Berry-Esseen inequality, has been proved in~\cite{DTW24}: if $\E|X_1|^3$ is finite then there exists a constant $C$ such that 
\begin{equation}
\label{eq:BE3}
\sup_{u\ge0}\left|
\pr\left(\frac{S_n}{\sigma\sqrt{n}}>u\,\Big|\tau>n\right)
-e^{u^2/2}\right|\le\frac{C (\E|X_1|^3)^3}{\sigma^9\sqrt{n}}.
\end{equation}
A slower rate of convergence for random walks with arbitrary starting point has recently been obtained in  Grama and Xiao~\cite{GX}. 

Let 
$$
\overline{\tau}:=\inf\{n\ge1: S_n<0\}.
$$
It is clear that $\pr(\overline{\tau}=\tau)=1$ in the absolute continuous case. This implies that the decomposition in Theorem~\ref{th:absolute} remains valid for 
$\frac{d}{dx}\pr(S_n\le x,\overline{\tau}>n)$. In the lattice case
one has $\pr(\overline{\tau}>\tau)>0$. Since in \cite{DTW23} we have also derived the decomposition
\begin{align} 
\label{eq:locprobtau2}
    \pr(S_n=x,\overline{\tau}>n)
=
    \sum_{j=1}^{\lf \frac{r+1}{2}\rf} 
        \frac{\overline{U}_j(x)}{(n+1)^{j+1/2}}
+
    O\left(
        \frac{(1+x)^{r+1}}{n^{(r+3)/2}}\log^{{\lf \frac{r}{2}\rf}}n
    \right),
\end{align}
we can repeat the proof of Theorem~\ref{thm:lattice}. As a result we get
\begin{theorem}
\label{thm:lattice2}
Assume that $\e|X|^{r+3}$ is finite for some integer $r \ge 1$.
Assume also that $S_n$ is integer valued with maximal span $1$. Then there exist polynomials $\overline{P}_1$, $\overline{P}_2$,\ldots, $\overline{P}_{r+1}$ such that
\begin{align*}
    \pr(S_n=x,\overline{\tau}>n) 
=
    e^{-\frac{x^2}{2\sigma^2n}}
    \sum_{\nu = 2}^{r+1}
        \frac{\overline{P}_\nu(x/\sigma\sqrt{n})}{n^{\nu/2}} 
+
    O\left( \frac{1}{\min (n^{(r+2)/2}, x^{r+2})} \right).
\end{align*}
\end{theorem}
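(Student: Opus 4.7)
The plan is to repeat the proof of Theorem~\ref{thm:lattice} essentially verbatim, with the decomposition \eqref{eq:locprobtau2} in place of \eqref{eq:locprobtau}. The starting point is the identity
\begin{equation*}
\pr(S_n=x,\overline{\tau}>n)=\pr(S_n=x)-\pr(S_n=x,\overline{\tau}<n),
\end{equation*}
valid for $x\ge 0$ since $\overline{\tau}=n$ forces $S_n<0$. To the unconstrained term $\pr(S_n=x)$ I would apply the classical Edgeworth local expansion from Petrov~\cite{Petrov}, which directly contributes polynomials in $x/\sigma\sqrt{n}$ multiplied by $e^{-x^2/2\sigma^2 n}$ up to precision $n^{-(r+2)/2}$.

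For the subtracted term, the strong Markov property at $\overline{\tau}$ gives the analogue of \eqref{eq:tau-decomp}:
\begin{equation*}
\pr(S_n=x,\overline{\tau}<n)=\sum_{k=1}^{n-1}\sum_{y=1}^{\infty}\pr(S_k=-y,\overline{\tau}=k)\pr(S_{n-k}=x+y),
\end{equation*}
the only difference from \eqref{eq:tau-decomp} being that the inner sum starts at $y=1$, since $S_{\overline{\tau}}<0$ strictly. To expand the factor $\pr(S_k=-y,\overline{\tau}=k)$ I would use the one-step decomposition
\begin{equation*}
\pr(S_k=-y,\overline{\tau}=k)=\sum_{z=0}^{\infty}\pr(S_{k-1}=z,\overline{\tau}>k-1)\pr(X=-y-z)
\end{equation*}
and substitute \eqref{eq:locprobtau2} for the first factor. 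Summing in $z$ against the increment distribution produces an expansion in half-integer powers of $k^{-1}$ whose coefficients are controlled by the analogue of \eqref{eq:U-bound} for the functions $\overline{U}_j$, which was established in~\cite{DTW23}.

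Inserting both expansions into the double sum and approximating $\pr(S_{n-k}=x+y)$ by its local CLT Gaussian plus Edgeworth corrections, the $y$-sums reduce (after Euler--Maclaurin correction) to integrals of the form
\begin{equation*}
\int_0^{\infty}\frac{y^a}{(n-k)^{(1+b)/2}}e^{-(x+y)^2/2\sigma^2(n-k)}\,dy,
\end{equation*}
weighted by negative half-powers of $k$. Integration by parts in $y$, the change of variables $t=x/\sigma\sqrt{n}$, and summation in $k$ then reorganise these contributions into the required $e^{-x^2/2\sigma^2 n}\overline{P}_\nu(t)/n^{\nu/2}$ shape. Since the algebraic manipulations producing $\overline{P}_\nu$ depend only on the moments of $X$ and on the functions $\overline{U}_j$, which play exactly the structural role that $U_j$ play in the proof of Theorem~\ref{thm:lattice}, the polynomials $\overline{P}_\nu$ are obtained from the same formulas with $U_j$ replaced by $\overline{U}_j$.

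The main obstacle, as in Theorem~\ref{thm:lattice}, is the uniform-in-$x$ control of the remainder so as to obtain the bound $O(\min(n^{-(r+2)/2},x^{-(r+2)}))$. This requires splitting the $(k,y)$-domain into the regimes $k$ small, $k$ in the bulk with $y\lesssim\sqrt{k}$, and $y$ large, and carefully balancing the polynomial growth of $\overline{U}_j$ against the Gaussian decay of the kernel $e^{-(x+y)^2/2\sigma^2(n-k)}$ when $x$ is of order $\sqrt{n}$. Since every estimate used in the original proof depends only on the polyharmonic expansion of the one-sided probabilities and on the associated polynomial bounds, and not on the specific choice of $\tau$ versus $\overline{\tau}$, the argument transfers without further modifications.
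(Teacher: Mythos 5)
Your proposal follows essentially the same route as the paper: the authors prove Theorem~\ref{thm:lattice2} by simply repeating the proof of Theorem~\ref{thm:lattice} with \eqref{eq:locprobtau2} in place of \eqref{eq:locprobtau}, the only substantive change being the barred analogue of Lemma~\ref{lem:sumovery} (see Remark~\ref{rem:lemma18}), which is exactly the modification you identify via the one-step decomposition at $\overline{\tau}$. Your outline is correct and matches the paper's argument.
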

The only change we need is a new version of Lemma~\ref{lem:sumovery}, see Remark~\ref{rem:lemma18} below.

One term of Edgeworth expansion for 
global probabilities $\pr_b(\tau\le n, S_n\le x)$ 
for random walk started at a point $b = b(n)$ 
was obtained in~\cite{SiegmundYuh}. This result was later used in~\cite{BroadieGlassermanKou} to obtain the first order correction for the price of discrete-time barrier option. 
In the following work we are planning to obtain the full asymptotic expansion for normal deviations when random walk starts at an arbitrary point. This expansion should allow us to extend the first order correction in~\cite{BroadieGlassermanKou}  to obtain further corrections for discrete-time barrier options.

The remaining part of the paper is organized as follows.
At many places in the proofs we will need to approximate sums by integrals; all bounds and asymptotic relations of that kind are collected in the next section. 
In Section 3, we present an expansion for 
$\pr(S_n=x)$
 that is well-known in the literature, in a form that is useful for our purposes.
 Section 4 is devoted to the main part of the proof, there we derive an expansion for $\pr(S_n=x,\tau<n)$ which is valid for all $x$. The resulting expansion contains, from the formal point of view, some non-polynomial terms. More precisely, the obtained expansion contain negative powers of $x/\sqrt{n}$ as pre-factors of $e^{-x^2/n}$. 
 In Section 5 we show that non-polynomial terms have zero coefficients and simplify the expansion obtained in Section 4. These simplifications lead also to the algorithmic computability of the coefficients in the expansions of Theorems~\ref{thm:lattice} and \ref{th:absolute}. This is illustrated in Section 6, where we prove Corollary~\ref{cor:r=2} and compute polynomials $P_2$ and $P_3$.
\section{Approximation of sums by integrals}
In this section we collect estimates for sums of values of elementary functions at integer points. The need for such estimates is quite clear from the decomposition \eqref{eq:tau-decomp}, which is the starting point of our approach.
\begin{lemma} \label{lem:firsthalfsum}
Let $a>0,b>1$, $t\in [0,1]$ and $N_n \in [\frac{n}{3}, \frac{2n}{3}-1]$. Then there exists a constant $\gamma_{a,b}$ depending on $a$ and $b$ only, such that
\begin{align*}
    \sum_{k=1}^{N_n} 
    \frac{1}{(n-k-t)^{a}(k+t)^{b}}
    e^{-\frac{z^2}{2 k}}
    \le
    \frac{\gamma_{a,b}}{n^a |z|^{2(b-1)}}.
\end{align*}
\end{lemma}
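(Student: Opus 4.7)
My plan is to factor out the $n^{-a}$ dependence and then bound a one-parameter sum by a Gaussian-type integral. For $1 \le k \le N_n \le 2n/3 - 1$ with $t \in [0,1]$, we have $n - k - t \ge n/3$, hence $(n-k-t)^{-a} \le 3^a n^{-a}$ uniformly in $k$. Using also $(k+t)^{-b} \le k^{-b}$, the task reduces to establishing
\begin{equation*}
S(z) := \sum_{k=1}^\infty \frac{1}{k^b}\, e^{-z^2/(2k)} \le \frac{C_b}{|z|^{2(b-1)}}
\end{equation*}
with a constant $C_b$ depending only on $b$; the lemma will then follow with $\gamma_{a,b} = 3^a C_b$.

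For $|z| \le 1$ the bound is essentially trivial: $S(z) \le \zeta(b) < \infty$ and $|z|^{-2(b-1)} \ge 1$ because $b > 1$, so any sufficiently large constant works. For $|z| \ge 1$ I would compare the sum to the integral
\begin{equation*}
I(z) := \int_0^\infty s^{-b}\, e^{-z^2/(2s)}\, ds,
\end{equation*}
and evaluate $I(z)$ via the substitution $u = z^2/(2s)$: it reduces to $2^{b-1}|z|^{-2(b-1)}\int_0^\infty u^{b-2} e^{-u}\, du = 2^{b-1}\Gamma(b-1) / |z|^{2(b-1)}$, which is finite precisely because $b > 1$.

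The main obstacle is the sum-to-integral comparison, since the integrand $f(s) = s^{-b} e^{-z^2/(2s)}$ is not monotone. I would handle this by observing that $f$ is unimodal with mode $s^* = z^2/(2b)$ (from $(\log f)'(s) = (z^2/(2s) - b)/s$) and then splitting the sum at $k_0 := \lceil s^* \rceil$: on the increasing side $f(k) \le \int_k^{k+1} f(s)\, ds$, on the decreasing side $f(k) \le \int_{k-1}^k f(s)\, ds$, with the single transition term at $k_0$ absorbed into $f(s^*) = (2b/e)^b|z|^{-2b}$, which is itself $O(|z|^{-2(b-1)})$ for $|z| \ge 1$. Assembling the pieces yields $S(z) \le 2 f(s^*) + I(z) \le C_b |z|^{-2(b-1)}$, as required. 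Everything else is routine.
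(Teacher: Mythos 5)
Your proof is correct, and its skeleton coincides with the paper's: both factor out $(n-k-t)^{-a}\le 3^a n^{-a}$ and then reduce the remaining one-parameter sum to the integral $\int_0^\infty y^{-b}e^{-z^2/(2y)}\,dy=2^{b-1}\Gamma(b-1)|z|^{-2(b-1)}$ via the substitution $s=z^2/(2y)$. The only genuine divergence is in the sum-to-integral comparison, which you rightly flag as the one nontrivial step. You exploit unimodality of $f(s)=s^{-b}e^{-z^2/(2s)}$, split at the mode $s^*=z^2/(2b)$, and absorb the transition terms into $f(s^*)=(2b/e)^b|z|^{-2b}$, which forces a separate (trivial) case for $|z|\le 1$; be slightly careful that there can be two boundary indices straddling $s^*$ rather than one, though your factor of $2$ already covers this. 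The paper instead uses a uniform pointwise domination: for every $\tau\in[0,1]$ one has
\begin{align*}
\frac{1}{(k+t)^b}e^{-\frac{z^2}{2(k+t)}}\le \frac{2^b}{(k+t+\tau)^b}e^{-\frac{z^2}{2(k+t+\tau)}},
\end{align*}
since the exponential is increasing in its argument and the power factor loses at most $2^b$; integrating in $\tau$ bounds each term by $2^b\int_{k+t}^{k+t+1}y^{-b}e^{-z^2/(2y)}\,dy$ with no monotonicity discussion, no splitting at the mode, and no case distinction in $|z|$. Your route is more laborious but equally valid; the paper's device is worth remembering, as it handles non-monotone unimodal integrands in one line at the price of a fixed multiplicative constant.
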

\begin{proof}
Noting that $n-k-t \ge n/3$ for all $k\le N_n$, we have
\begin{align*}
    \sum_{k=1}^{N_n}
        \frac{1}{(n-k-t)^{a}(k+t)^{b}}
        e^{-\frac{z^2}{2k}}
\le
    \frac{3^{a}}{n^{a}}
    \sum_{k=1}^{\infty}
        \frac{1}{(k+t)^{b}}
        e^{-\frac{z^2}{2 (k+t)}}.
\end{align*}
For all $k\ge 1$ and any $\tau \in [0,1]$ one has
\begin{align*}
    \frac{1}{(k+t)^b}e^{-\frac{z^2}{2 (k+t)}}
    \le
    \frac{1}{(k+t)^{b}}e^{-\frac{z^2}{2 (k+t+\tau)}}
    \le
    \frac{2^{b}}{(k+t+\tau)^b}e^{-\frac{z^2}{2 (k+t+\tau)}}.
\end{align*}
Therefore,
$$
\frac{1}{(k+t)^b}e^{-\frac{z^2}{2 (k+t)}}
\le 2^b
\int_{k+t}^{k+t+1}y^{-b}
e^{-\frac{z^2}{2 y}}
$$
and 
\begin{align*}
    \sum_{k=1}^{\infty}\frac{1}{(k+t)^{b}}e^{-\frac{z^2}{2 (k+t)}} \le
    2^{b}\int^{\infty}_{1+t} y^{-b}e^{-\frac{z^2}{2 y}}dy
    \le
     2^{b}\int^{\infty}_0 y^{-b}e^{-\frac{z^2}{2 y}}dy.
\end{align*}
Substituting $s = \frac{z^2}{2y}$ we obtain
\begin{align*}
    \int^{\infty}_0 y^{-b}e^{-\frac{z^2}{2 y}}dy = \frac{2^{b-1}}{|z|^{2(b-1)}}\int_0^\infty s^{b-2}e^{-s}ds = \Gamma(b-1)\frac{2^{b-1}}{|z|^{2(b-1)}}.
\end{align*}
As a result we have
\begin{align*}
    \sum_{k=1}^{N_n}
        \frac{1}{(n-k-t)^{a}(k+t)^{b}}
        e^{-\frac{z^2}{2 k}} 
\le
    \Gamma(b-1)
    \frac{3^{a}2^{2b-1}}{n^{a}|z|^{2(b-1)}}.
\end{align*}
Thus, the proof is complete.
\end{proof}
\begin{lemma}\label{lem:secondhalfsum}
Fix some $a>1$ and $b>0$. Let the sequence $v_k$ be such that
\begin{align}
\label{eq:v-cond}
    |v_k| \le \frac{C\log^d k}{k^a}.
\end{align}
Then there exist polynomials $K_\nu$ of degree $\nu$ such that the following equality holds uniformly in $z$:
\begin{align*}
    \sum_{k=2}^{\lf n/2\rf} 
        \frac{z^m v_k}{(n-k)^{b}}e^{-\frac{z^2}{2 (n-k)}}
=
    \frac{z^m}{n^b} 
    e^{-\frac{z^2}{2 n}}
    \sum_{\nu=0}^{\lf a \rf-1}
            \frac{K_{\nu}(z^2/n)}{n^{\nu}}
+
    O\left(\frac{\log^{d+\delta_a} n}{n^{a+b-m/2-1}}\right),
\end{align*}
where $\delta_a:={\rm 1}\{a\in\N\}$.
\end{lemma}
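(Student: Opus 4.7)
The plan is to Taylor-expand the smooth factor $(n-k)^{-b}e^{-z^2/(2(n-k))}$ in the variable $u:=k/n$ and then insert the pointwise bound \eqref{eq:v-cond} on $v_k$. Setting $\alpha:=z^2/(2n)$, a direct rewriting gives
\begin{align*}
\frac{1}{(n-k)^b}e^{-\frac{z^2}{2(n-k)}}
=\frac{e^{-\alpha}}{n^b}(1-u)^{-b}e^{-\alpha u/(1-u)}
=:\frac{e^{-\alpha}}{n^b}h(u,\alpha).
\end{align*}
Expanding each factor in $u$, one checks that $h(u,\alpha)=\sum_{\nu\ge 0}P_\nu(\alpha)u^\nu$ with $P_0=1$ and every $P_\nu$ a polynomial in $\alpha$ of degree exactly $\nu$. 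Moreover, writing $\alpha u/(1-u)=\alpha/(1-u)-\alpha$, repeated differentiation of $h$ in $u$ shows (using $(1-u)^{-1}\le 2$ and $e^{-\alpha u/(1-u)}\le 1$ for $u\in[0,1/2]$, $\alpha\ge 0$) that $|\partial_u^j h(u,\alpha)|\le C_j(1+\alpha^j)$ uniformly in $\alpha\ge 0$. Taylor's theorem with $N:=\lf a\rf-1$ then yields, for $u\in[0,1/2]$,
\begin{align*}
\Big|h(u,\alpha)-\sum_{\nu=0}^{N}P_\nu(\alpha)u^\nu\Big|
\le C_N u^{N+1}(1+\alpha^{N+1}).
\end{align*}

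Substituting this into the target sum with $u=k/n$ splits it into a main part, a tail correction from extending each inner sum over $k$ to infinity, and a Taylor remainder. For $\nu<a-1$ the constant $c_\nu:=\sum_{k=2}^\infty k^\nu v_k$ converges absolutely by \eqref{eq:v-cond}, with $\sum_{k>\lf n/2\rf}k^\nu|v_k|=O(\log^d n/n^{a-\nu-1})$. Define $K_\nu(z^2/n):=c_\nu\,P_\nu(z^2/(2n))$, a polynomial in $z^2/n$ of degree $\nu$. In the edge case $\nu=a-1$ (only relevant when $a$ is an integer), the defining series for $c_{a-1}$ diverges logarithmically; one sets $K_{a-1}:=0$ and notes that the resulting unaccounted contribution $\frac{z^m e^{-\alpha}P_{a-1}(\alpha)}{n^{a+b-1}}\sum_{k=2}^{\lf n/2\rf}k^{a-1}v_k$ has size $O(\log^{d+1}n)\cdot n^{m/2}/n^{a+b-1}$, absorbed into the error.

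The key uniform-in-$z$ estimate is
\begin{align*}
|z|^m(1+\alpha^j)\,e^{-\alpha}\le C_{m,j}\,n^{m/2},\qquad \alpha=\tfrac{z^2}{2n},
\end{align*}
which follows from boundedness of $s^p e^{-s^2/2}$ at $s=|z|/\sqrt{n}$ for every fixed $p\ge 0$. Applying this, the tail-correction contribution is bounded by $\frac{n^{m/2}\log^d n}{n^{b+\nu}\cdot n^{a-\nu-1}}=\frac{\log^d n}{n^{a+b-m/2-1}}$, and the Taylor-remainder contribution is bounded by $\frac{n^{m/2}}{n^{b+\lf a\rf}}\sum_{k=2}^{\lf n/2\rf}k^{\lf a\rf}|v_k|\le \frac{n^{m/2}\cdot n^{\lf a\rf+1-a}\log^d n}{n^{b+\lf a\rf}}=\frac{\log^d n}{n^{a+b-m/2-1}}$. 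The sole exception is the unaccounted $\nu=a-1$ piece in the integer case, which carries one extra factor of $\log n$; this is exactly what the indicator $\delta_a=1$ absorbs, giving the overall bound $C\log^{d+\delta_a}n/n^{a+b-m/2-1}$ claimed.

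The hardest ingredient is the uniform derivative bound $|\partial_u^j h|\le C_j(1+\alpha^j)$ combined with the envelope estimate $|z|^m(1+\alpha^j)e^{-\alpha}\le Cn^{m/2}$; together these decouple the $z$-dependence from the $k$-dependence and enforce the necessary uniformity in $z$. Once this is in place, the rest of the proof is bookkeeping over $\nu$ and tracking the correct power of $\log n$, which jumps by one precisely when $a$ is an integer because the series defining the leading coefficient $c_{a-1}$ sits exactly on the boundary of convergence.
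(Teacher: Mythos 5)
Your proof is correct and follows essentially the same strategy as the paper's: factor out $e^{-z^2/(2n)}$, Taylor-expand the remaining smooth factor in $k/n$ to order $\lfloor a\rfloor-1$, convert the resulting coefficients into the convergent series $\sum_k v_k k^\nu$ plus controlled tails, and use the envelope bound $|z|^m(1+\alpha^j)e^{-\alpha}\le C n^{m/2}$ for uniformity in $z$. The only differences are organizational: the paper performs two nested expansions (first the exponential, then $(n-k)^{-(b+\nu)}$) and in the integer case truncates the second one an order earlier so that every coefficient series converges, the extra $\log n$ then coming from the Taylor remainder, whereas you keep the borderline order $\nu=a-1$ whose coefficient series diverges and absorb that whole term into the $O(\log^{d+1}n/n^{a+b-m/2-1})$ error --- both routes are valid and yield the stated $\delta_a$ dichotomy.
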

\begin{proof}
We first notice that for every $j\ge1$ there exists a polynomial $h_j(t)$ of degree $j$ such that 
$$
\frac{d^j}{d t^j}e^{-t^2/2}=h_j(t)e^{-t^2/2}.
$$
Using the standard Taylor expansion for $e^x$, we have 
$$
e^{-\frac{t^2}{2}}
=\sum_{j=0}^\infty\frac{(-1)^j}{j!2^j}t^{2j}.
$$
This implies that
\begin{align} \label{eq:mucoefficients}
    h_{2\mu}(0) = (-1)^\mu \frac{(2\mu)!}{2^\mu\mu!}
    \quad\text{and}\quad 
    h_{2\mu+1}(0)=0
\end{align}
for all $\mu\ge0$.

Applying now the Taylor formula to the function
$t\mapsto e^{-t^2/2}$, we have 
\begin{align}
\label{eq:exp-taylor}
\nonumber
e^{-t^2/2}
&=\sum_{j=0}^{R-1}\frac{h_j(0)}{j!}t^j
+\frac{h_R(\theta)}{R!}e^{-\theta^2/2}t^R\\
&=\sum_{\nu=0}^{\lfloor \frac{R-1}{2}\rfloor}\frac{h_{2\nu}(0)}{(2\nu)!}t^{2\nu}+\frac{h_R(\theta)}{R!}e^{-\theta^2/2}t^R,
\quad \theta\in(0,t)
\end{align}
for every $R\ge1$. 

One has the equality 
\begin{align*}
    e^{-\frac{z^2}{2 (n-k)}} 
=
    e^{-\frac{z^2}{2 n}} 
    e^{-\frac{k z^2}{2 n(n-k)}}.
\end{align*}
We apply \eqref{eq:exp-taylor} to the second exponential with
\begin{align*}
    R_a
=
    \begin{cases}
    2\lf a\rf, \quad \; \; \; a \notin \mathbb{N},\\
        2a-1, \quad a\in \mathbb{N}
    \end{cases}
\end{align*}
to get 
\begin{align} \label{eq:prop:sechalf:expdecomp}
    e^{
    \frac{-k z^2}
    {2 n(n-k)}
    }
=
    \sum_{\nu=0}^{\lf a \rf - 1}
        \frac{h_{2\nu}(0)}{(2\nu)!}
        \frac{ k^\nu x^{2\nu}}{n^\nu (n-k)^\nu}
+
\frac{h_{R_a}(\theta)}{R_a!}e^{-\theta^2/2}
\frac{
    k^{R_a/2}|x|^{R_a}
}{n^{R_a/2}(n-k)^{R_a/2}
}.
\end{align}
Here we've used the fact that 
$\lf \frac{R_a-1}{2} \rf = \lf a \rf -1$ for all $a$.
In view of \eqref{eq:v-cond},
\begin{align}
\label{eq:sum-bound}
\sum_{k=2}^{\lf \frac{n}{2}\rf}
v_k k^{R_a/2} 
\le C
\sum_{k=2}^{\lf \frac{n}{2}\rf}\frac{\log^d k}{k^{a-R_a/2}}
\le C_1\left\{
\begin{array}{ll}
n^{1-\{a\}}\log^{d} n, &a\notin\N,\\
n^{1/2}\log^{d} n, &a\in\N.
\end{array}
\right.
\end{align}
Using this estimate and noting that the functions
$h_{R_a}(\theta) e^{-\theta^2/2}$,
$\left(\frac{|z|}{\sqrt{n}}\right)^{R_a+m}e^{-z^2/2}$ are bounded by some constants and that $n-k \ge n/2$, we conclude that
the remainder in \eqref{eq:prop:sechalf:expdecomp} satisfies
\begin{align*}
&\left|
\frac{h_{R_a}(\theta)}{R_a!}e^{-\theta^2/2}
        e^{-\frac{z^2}{2 n}}
        \sum_{k=2}^{\lf \frac{n}{2}\rf}
          \frac{v_k}{(n-k)^b}
            \frac{k^{R_a/2}|z|^{R_a+m}}{n^{R_a/2}(n-k)^{R_a/2}}
    \right|\\
&\hspace{1cm}
\le \frac{C_2}{n^{b+R_a/2-m/2}}
\sum_{k=2}^{\lf \frac{n}{2}\rf}\frac{\log^d k}{k^{a-R_a/2}}\le C_3
    \frac{\log^{d}n}{n^{a+b-m/2-1}}.
\end{align*}
Consequently,
\begin{multline} \label{eq:prop:repres1}
z^m
\sum_{k=2}^{\lf n/2\rf} 
\frac{v_k}{(n-k)^{b}}e^{-\frac{z^2}{2 (n-k)}}
=
z^m
e^{-\frac{z^2}{2 n}}
\sum_{\nu=0}^{\lf a\rf-1}
\sum_{k=2}^{\lf n/2\rf} 
\frac{h_{2\nu}(0)}{(2\nu)!} 
            \frac{v_k k^{\nu}z^{2\nu}}{n^{\nu}(n-k)^{b+\nu}}
\\
+
    O\left(\frac{\log^{d} n}{n^{a+b-m/2-1}}\right).
\end{multline}
Next, applying the Taylor formula to the function
$(n-k)^{-(b+\nu)}$, we have, for some $\psi_k\in(0,k)$,
\begin{align}\label{eq:taylor1}
\nonumber
\frac{1}{(n-k)^{b+\nu}}
&=   
\sum_{i=0}^{M_a-\nu-1}
\frac{\prod_{j=0}^{i-1}(b+\nu+j)}{i!}
\frac{k^i }{n^{b+\nu+i}}\\
&\hspace{3cm}
+
\frac{\prod_{j=0}^{M_a-\nu}(b+\nu+j)}{(M_a-\nu)!}
\frac{k^{M_a-\nu}}{(n-\psi_k)^{b+M_a}},
\end{align}
where 
$$
M_a:=\left\{ 
\begin{array}{ll}
\lfloor a \rfloor, &a\notin\N,\\
a-1, &a\in\N.
\end{array}
\right.
$$
Then,  
the sum of remainder terms can be bounded in the following way:
\begin{align} \label{eq:prop:estim1}
\left|\frac{z^{2\nu+m}}{n^\nu}
e^{-\frac{z^2}{2 n}}
\sum_{k=2}^{\lf \frac{n}{2}\rf}
\frac{v_k k^{M_a}}{(n-\psi_k)^{b+M_a}}
    \right|
\le
    \frac{C}{n^{b+M_a-m/2}}
      \sum_{k=2}^{\lf \frac{n}{2}\rf}v_k k^{M_a}.
\end{align}
Similar to \eqref{eq:sum-bound} we get 
\begin{align*}
\sum_{k=2}^{\lf \frac{n}{2}\rf}
v_k k^{M_a} 
\le C
\sum_{k=2}^{\lf \frac{n}{2}\rf}\frac{\log^d k}{k^{a-M_a}}
\le C_1\left\{
\begin{array}{ll}
n^{1-\{a\}}\log^{d} n, &a\notin\N,\\
\log^{d+1} n, &a\in\N.
\end{array}
\right.
\end{align*}
Plugging this into \eqref{eq:prop:estim1}, we obtain 
\begin{align} 
\label{eq:prop:estim11}
\left|\frac{z^{2\nu+m}}{n^\nu}
e^{-\frac{z^2}{2 n}}
\sum_{k=2}^{\lf \frac{n}{2}\rf}
\frac{v_k k^{M_a}}{(n-\psi_k)^{b+M_a}}
    \right|
\le
C\frac{\log^{d+\delta_a}}{n^{b+a-m/2-1}}.
\end{align}
Plugging \eqref{eq:taylor1} into \eqref{eq:prop:repres1} and applying \eqref{eq:prop:estim1}, we conclude that 

\begin{multline*}
    z^m
    \sum_{k=2}^{\lf n/2\rf} 
        \frac{v_k}{(n-k)^{b}}e^{-\frac{z^2}{2 (n-k)}}
\\
=
    z^m
    e^{-\frac{z^2}{2 n}}
    \sum_{\nu=0}^{\lf a\rf-1}
    \frac{h_{2\nu}(0)}{(2\nu)!}
    \frac{z^{2\nu}}{n^{\nu}}
    \sum_{i=0}^{M_a-\nu-1}
    \frac{\prod_{j=0}^{i-1}(b+\nu+j)}{i!n^{b+\nu+i}}
\sum_{k=2}^{\lf n/2\rf} 
                v_k k^{\nu+i}
\\
+
    O\left(\frac{\log^{d+\delta_a} n}{n^{a+b-m/2-1}}\right).
\end{multline*}
For every $\lambda <a-1$ we infer from \eqref{eq:v-cond} that
\begin{align} \label{eq:propsummation}
    \sum_{k=2}^{\lf n/2 \rf}
        v_k k^{\lambda}
=
    \sum_{k=2}^{\infty}
        v_k k^{\lambda}
-
    \sum_{k > \lf \frac{n}{2}\rf}^\infty
        v_k k^{\lambda}
=
    g_{\lambda} 
+
    O\left(\frac{\log^d n}{n^{a-\lambda-1}}\right),
\end{align}
where 
$$
g_\lambda:=\sum_{k=2}^\infty k^\lambda v_k.
$$
This implies that 
\begin{multline*}
    z^m
    \sum_{k=2}^{\lf n/2\rf} 
        \frac{v_k}{(n-k)^{b}}e^{-\frac{z^2}{2 (n-k)}}
\\
=
    z^m
    e^{-\frac{z^2}{2 n}}
    \sum_{\nu=0}^{\lf a\rf-1}
    \frac{h_{2\nu}(0)}{(2\nu)!}
    \frac{z^{2\nu}}{n^{\nu}}
    \sum_{i=0}^{M_a-\nu-1}
    \frac{\prod_{j=0}^{i-1}(b+\nu+j)}{i!n^{b+\nu+i}}
g_{\nu+i}
+O\left(\frac{\log^{d+\delta_a} n}{n^{a+b-m/2-1}}\right).
\end{multline*}
Gathering the terms proportional to $n^{-r}$, we conclude that

\begin{align*}
    \sum_{k=2}^{\lf n/2\rf} 
        \frac{z^m v_k}{(n-k)^{b}}e^{-\frac{z^2}{2 (n-k)}}
=
    \frac{z^m}{n^b} 
    e^{-\frac{z^2}{2 n}}
    \sum_{r=0}^{M_a-1}
            \frac{Q_{r}(z^2/n)}{n^{r}}
+
    O\left(\frac{\log^{d+\delta_a} n}{n^{a+b-m/2-1}}\right)
\end{align*}
with some polynomials $Q_r$ of degree $r$. Since 
$M_a \le \lfloor a \rfloor$  for all $a$, we may extend the summation region to $\{0,1,\ldots,\lfloor a \rfloor-1\}$ by adding, if needed, zero polynomials.
Thus, the lemma is proven.
\end{proof}
\begin{lemma}\label{lem:secondhalfsumgeneral}
For all $a, b>0$, $a\notin \mathbb{Z}$ and $t \in (0,1)$ there exist numbers $c_{i,\nu}$ such that, uniformly in $z$,
\begin{multline*}
    \sum_{k=2}^{\lf n/2\rf} 
        \frac{z^m}{(n-k+t)^{b}(k-t)^a}
        e^{-\frac{z^2}{2 (n-k+t)}}
\\
=
    \frac{z^m}{n^b}
    e^{-\frac{z^2}{2 n}}
    \sum_{\nu=0}^{\lf a \rf-1}
        \frac{z^{2\nu}}{n^{\nu}}
        \sum_{i=\nu}^{\lf a \rf-1}
            \frac{c_{i,\nu}g_{a-i}(t)}{n^{i}}
+
    O\left(\frac{1}{n^{a+b-m/2-1}}\right),
\end{multline*}
where
\begin{align*}
    g_{\lambda}(t) 
=
    \sum_{k=2}^{\infty}
        \frac{1}{(k-t)^{\lambda}}.
\end{align*}
\end{lemma}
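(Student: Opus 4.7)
The strategy is to repeat the three-step Taylor analysis of Lemma~\ref{lem:secondhalfsum}, specialised to $v_k=1/(k-t)^a$, but retaining the explicit form of the coefficients rather than hiding them inside the abstract polynomials $K_\nu$. I begin with the factorisation
\begin{align*}
e^{-\frac{z^2}{2(n-k+t)}}=e^{-\frac{z^2}{2n}}\,e^{-\frac{(k-t)z^2}{2n(n-k+t)}}
\end{align*}
and apply the Taylor formula \eqref{eq:exp-taylor} to the second exponential with order $R=2\lfloor a\rfloor$ (the choice is admissible because $a\notin\mathbb{Z}$). The odd-indexed values $h_{2\mu+1}(0)$ vanish by \eqref{eq:mucoefficients}, so only even monomials survive and the expansion reduces to
\begin{align*}
e^{-\frac{(k-t)z^2}{2n(n-k+t)}}=\sum_{\nu=0}^{\lfloor a\rfloor-1}\frac{h_{2\nu}(0)}{(2\nu)!}\,\frac{(k-t)^\nu z^{2\nu}}{n^\nu(n-k+t)^\nu}+\mathcal{R}_1,
\end{align*}
with $\mathcal{R}_1$ of the order $(k-t)^{\lfloor a\rfloor}|z|^{2\lfloor a\rfloor}/(n(n-k+t))^{\lfloor a\rfloor}$ up to a bounded factor of the form $h_{R_a}(\theta)e^{-\theta^2/2}$.

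Second, I Taylor-expand $(n-k+t)^{-(b+\nu)}$ about $n$ in powers of $k-t$ up to order $\lfloor a\rfloor-\nu-1$, mimicking \eqref{eq:taylor1}; this produces explicit coefficients $d_{j,\nu}=\prod_{l=0}^{j-1}(b+\nu+l)/j!$ and a remainder $\mathcal{R}_2$ proportional to $(k-t)^{\lfloor a\rfloor-\nu}(n-\psi_k)^{-(b+\lfloor a\rfloor)}$. After inserting both expansions and summing in $k$, the main-term sums reduce to $\sum_{k=2}^{\lfloor n/2\rfloor}(k-t)^{\nu+j-a}$. Because $\nu+j\le\lfloor a\rfloor-1<a-1$ (this is the only place non-integrality of $a$ is essential), the series extended to $k=\infty$ converges to $g_{a-\nu-j}(t)$ and the tail from $\lfloor n/2\rfloor+1$ onward is $O(n^{\nu+j+1-a})$. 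Reindexing via $i=\nu+j$, so that for each $\nu$ the index $i$ runs from $\nu$ to $\lfloor a\rfloor-1$, and collecting all combinatorial factors into $c_{i,\nu}=\frac{h_{2\nu}(0)}{(2\nu)!}\,d_{i-\nu,\nu}$ produces exactly the structural form claimed, after pulling out the common factor $z^m n^{-b}e^{-z^2/(2n)}$.

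The main technical task is verifying that the three remainder sources -- $\mathcal{R}_1$, $\mathcal{R}_2$, and the tails of the $g_{a-i}(t)$ series -- each contribute $O(n^{-(a+b-m/2-1)})$ \emph{uniformly in $z$}. The key uniform bound is $|z|^{2\nu+m}e^{-z^2/(2n)}\le C_\nu n^{\nu+m/2}$, used in tandem with $n-k+t\ge n/2$ on the summation range and the elementary estimate $\sum_{k=2}^{\lfloor n/2\rfloor}(k-t)^{\lfloor a\rfloor-a}=O(n^{1-\{a\}})$. Plugging these in, each of the three sources collapses algebraically to the required order; importantly, no $\log n$ factor appears, which reflects the fact that $\delta_a=0$ in the language of Lemma~\ref{lem:secondhalfsum}. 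I expect the bookkeeping for $\mathcal{R}_1$, which has to be multiplied by both the weight $(k-t)^{-a}$ and the full outer sum before being bounded, to be the most delicate step, but beyond this careful accounting no new idea is needed.
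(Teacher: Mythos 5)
Your proposal is correct and follows essentially the same route as the paper's proof: the same factorisation $e^{-z^2/(2(n-k+t))}=e^{-z^2/(2n)}e^{-(k-t)z^2/(2n(n-k+t))}$, the Taylor expansion of order $R_a=2\lfloor a\rfloor$ with the odd terms vanishing, the expansion of $(n-k+t)^{-(b+\nu)}$ in powers of $k-t$, the reindexing $i=\nu+j$ yielding $c_{i,\nu}=\frac{h_{2\nu}(0)}{(2\nu)!}\,\frac{\prod_{l=0}^{i-\nu-1}(b+\nu+l)}{(i-\nu)!}$, and the same three remainder estimates via $|z|^{2\nu+m}e^{-z^2/(2n)}\le Cn^{\nu+m/2}$ and $\sum_{k}(k-t)^{-\{a\}}=O(n^{1-\{a\}})$. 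Your observation that $\lfloor a\rfloor-1<a-1$ is what guarantees convergence of the series defining $g_{a-i}(t)$ correctly pinpoints the role of the hypothesis $a\notin\mathbb{Z}$.
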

\begin{proof}
Set, for brevity, $s_k:=k-t$. Similar to the proof of the previous lemma, we write
\begin{align*}
    e^{-\frac{z^2}{2 (n-s_k)}} = e^{-\frac{z^2}{2 n}} e^{-\frac{s_k z^2}{2 n(n-s_k)}}.
\end{align*}
Using \eqref{eq:exp-taylor}, we have 
\begin{align} \label{eq:prop:sechalf:expdecomp1}
    e^{
    \frac{-s_k z^2}
    {2 n(n-s_k)}
    }
=
    \sum_{\nu=0}^{\lf \frac{R_a-1}{2}\rf}
        \frac{h_{2\nu}(0)}{(2\nu)!}
        \frac{ s_k^\nu z^{2\nu}}{n^\nu (n-s_k)^\nu}
+
\frac{h_{R_a}(\theta)}{R_a!}e^{-\theta^2/2}
\frac{s_k^{R_a/2}|z|^{R_a}}{n^{R_a/2}(n-s_k)^{R_a/2}}.
\end{align}
Recall that the assumption $a\notin\N$ implies that 
$R_a=2\lf a\rf$.
For the sum of remainder terms in \eqref{eq:prop:sechalf:expdecomp1} we then have
\begin{align*}
&\left|
\frac{h_{R_a}(\theta)}{R_a!}e^{-\theta^2/2}
        e^{-\frac{z^2}{2 n}}
        \sum_{k=2}^{\lf \frac{n}{2}\rf}
            \frac{1}{(n-s_k)^b s_k^a}
            \frac{s_k^{\lf a \rf}|z|^{2\lf a \rf+m}}{n^{\lf a \rf}(n-s_k)^{\lf a \rf}}
    \right|\\
&\hspace{2cm}
\le
    e^{-\frac{z^2}{2 n}}
    \left(
        \frac{z^2}{n}
    \right)^{\lf a \rf +m/2}
    \frac{C}{n^{\lf a \rf+b - m/2}}
    \sum_{k=2}^{\lf \frac{n}{2} \rf}
        s_k^{-\{a\}}\\
&\hspace{2cm}
\le
    \frac{C}{n^{\lf a \rf+b - m/2}}
    \sum_{k=2}^{\lf \frac{n}{2} \rf}
        s_k^{-\{a\}}
\le
    \frac{C}{n^{a+b-m/2-1}}.
\end{align*}
Therefore,
\begin{align} 
\label{eq:prop:repres11}
\nonumber
&\sum_{k=2}^{\lf n/2\rf} 
        \frac{z^m}{(n-s_k)^{b}s_k^a}e^{-\frac{z^2}{2 (n-s_k)}}\\
&\hspace{1cm}=
    z^m
    e^{-\frac{z^2}{2 n}}
    \sum_{\nu=0}^{\lf a \rf-1}
        \sum_{k=2}^{\lf n/2\rf} \frac{h_{2\nu}(0)}{(2\nu)!} 
            \frac{z^{2\nu}}{n^{\nu}(n-s_k)^{b+\nu}s_k^{a-\nu}}
+
    O\left(\frac{1}{n^{a+b-m/2-1}}\right).
\end{align}
Applying \eqref{eq:taylor1} and noting that
$M_a=\lf a\rf$ for $a\notin\N$, we have
\begin{align}\label{eq:taylor11}
\nonumber
\frac{1}{(n-s_k)^{b+\nu}}
&=   
\sum_{i=0}^{\lf a\rf-\nu-1}
\frac{\prod_{j=0}^{i-1}(b+\nu+j)}{i!}
\frac{s_k^i }{n^{b+\nu+i}}\\
&\hspace{3cm}
+
\frac{\prod_{j=0}^{\lf a\rf-\nu}(b+\nu+j)}{(\lf a\rf-\nu)!}
\frac{k^{\lf a\rf-\nu}}{(n-\psi_k)^{b+\lf a\rf}},
\end{align}
for some $\psi_k \in [0,s_k]$. Similar to the proof of the previous lemma we get 
\begin{align} \label{eq:prop:estim12}
    \left|
        \frac{z^{2\nu+m}}{n^\nu}
        e^{-\frac{z^2}{2 n}}
        \sum_{k=2}^{\lf \frac{n}{2}\rf}
            \frac{1}{(n-\theta)^{b+\lf a \rf}s_k^{\{a\}}}
    \right|
\le
    \frac{C}{n^{a+b-m/2-1}}.
\end{align}
Plugging \eqref{eq:taylor11} into \eqref{eq:prop:repres11} and  applying then \eqref{eq:prop:estim12}, we obtain
\begin{multline*}
    \sum_{k=2}^{\lf n/2\rf} 
        \frac{z^m}{(n-s_k)^{b}s_k^a}
        e^{-\frac{z^2}{2 (n-s_k)}}
\\
=
    z^m
    e^{-\frac{z^2}{2 n}}
    \sum_{\nu=0}^{\lf a \rf-1}
        \frac{h_{2\nu}(0)}{(2\nu)!} 
        \frac{z^{2\nu}}{n^{\nu}}
        \sum_{i=0}^{\lf a \rf-\nu-1}
        \frac{\prod_{j=0}^{i-1}(b+\nu+j)}{i!n^{b+\nu+i}}
        \sum_{k=2}^{\lf n/2\rf} 
                \frac{1}{s_k^{a-\nu-i}}
\\
+
    O\left(\frac{1}{n^{a+b-m/2-1}}\right).
\end{multline*}
Recall that $s_k=k-t$. For $\lambda \le \lf a \rf-1$ we have, uniformly in $t \in [0,1]$,
\begin{align*}
    \sum_{k=2}^{\lf n/2 \rf}
        \frac{1}{(k-t)^{\lambda}}
=
    \sum_{k=2}^{\infty}
        \frac{1}{(k-t)^{\lambda}}
-
    \sum_{k > \lf \frac{n}{2}\rf}^\infty
        \frac{1}{(k-t)^{\lambda}}
=
    g_{\lambda}(t) 
+
    O\left(\frac{1}{n^{\lambda-1}}\right).
\end{align*}
The remainder we estimate as usual by the property of $e^{-\frac{z^2}{2 n}}$.
And so finally for some coefficients $c_{i,\nu}$
\begin{multline*}
    \sum_{k=2}^{\lf n/2\rf} 
        \frac{z^m}{(n-s_k)^{b}s_k^a}
        e^{-\frac{z^2}{2 (n-s_k)}}
=
    \frac{z^m}{n^b}
    e^{-\frac{z^2}{2 n}}
    \sum_{\nu=0}^{\lf a \rf-1}
        \frac{z^{2\nu}}{n^{\nu}}
        \sum_{i=\nu}^{\lf a \rf-1}
            \frac{c_{i,\nu}g_{a-i}(t)}{n^{i}}
\\
+
    O\left(\frac{1}{n^{a+b-m/2-1}}\right).
\end{multline*}
Thus, the lemma is proven.
\end{proof}

\begin{lemma}\label{prop:integral}
For integer $b\ge 0$ it holds
\begin{align*}
    \int_{0}^1
    \frac{1}{\sqrt{1-u}}u^{-b-3/2}
    e^{-\frac{z^2}{2u}}du
=
    {\rm sgn}(z)
    \sqrt{2\pi} 
    e^{-\frac{z^2}{2}}
    \sum_{k=0}^{b} 
        \frac{(2k-1)!!}{z^{2k+1}}{b\choose k}.
\end{align*}
(Here we use the standard convention: $(-1)!!=1$.)
\end{lemma}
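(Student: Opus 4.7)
The plan is to reduce the integral to a standard Gamma-function computation via a single change of variables. Set $u=1/(1+v)$, so that $v$ ranges over $[0,\infty)$ as $u$ traverses $(0,1]$; one computes $1-u=v/(1+v)$, hence $(1-u)^{-1/2}=\sqrt{1+v}/\sqrt{v}$, $u^{-b-3/2}=(1+v)^{b+3/2}$, and $du=-(1+v)^{-2}dv$. Combining the powers of $(1+v)$ (the exponents collapse as $1/2+b+3/2-2=b$) and reversing the orientation of the integral, the left-hand side becomes
$$e^{-z^2/2}\int_0^{\infty}\frac{(1+v)^{b}}{\sqrt{v}}\,e^{-z^{2}v/2}\,dv.$$
Convergence at $v=\infty$ forces $z\neq 0$, which is already necessary for convergence of the original integral at $u=0$.

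Next, I would expand $(1+v)^b$ by the binomial theorem and interchange the finite sum with the integral to obtain
$$e^{-z^2/2}\sum_{k=0}^{b}\binom{b}{k}\int_0^{\infty}v^{k-1/2}e^{-z^{2}v/2}\,dv.$$
The substitution $w=z^2 v/2$ evaluates each inner integral as $(2/z^{2})^{k+1/2}\Gamma(k+1/2)$, and the classical identity $\Gamma(k+1/2)=(2k-1)!!\sqrt{\pi}/2^{k}$ simplifies this to $\sqrt{2\pi}\,(2k-1)!!/|z|^{2k+1}$.

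Finally, since the exponent $2k+1$ is odd, one may rewrite $|z|^{-(2k+1)}=\operatorname{sgn}(z)\,z^{-(2k+1)}$ for $z\neq 0$, and assembling the factors yields precisely the right-hand side of the claimed identity. This is essentially a routine calculation; the only delicate bookkeeping concerns the $\operatorname{sgn}(z)$ prefactor, which necessarily appears because the transformed integrand depends only on $z^{2}$ while the stated expansion is written in terms of odd powers $z^{2k+1}$—the two representations are reconciled precisely by the oddness of $2k+1$, which matches the parity of $\operatorname{sgn}(z)$ under $z\mapsto -z$.
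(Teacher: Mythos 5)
Your proof is correct and follows essentially the same route as the paper: the substitution $u=1/(1+v)$ is exactly the composition of the paper's two steps ($t=1/u$ followed by the shift $t=1+x$), after which both arguments conclude via the binomial theorem and the identity $\Gamma(k+\tfrac12)=\sqrt{\pi}\,2^{-k}(2k-1)!!$. Your explicit tracking of the $\operatorname{sgn}(z)$ factor arising from $(z^2)^{-k-1/2}=|z|^{-2k-1}$ is a point the paper's write-up glosses over, but the computations are otherwise identical.
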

\begin{proof}
Substituting  $t=1/u$ we obtain 
\begin{align*}
\int_{0}^1
    \frac{1}{\sqrt{1-u}}u^{-b-3/2}
    e^{-\frac{z^2}{2u}}du 
    &= 
    \int_1^{\infty} \frac{1}{\sqrt{t-1}}t^b e^{-\frac{z^2}{2}t}dt\\
    &=
    e^{-\frac{z^2}{2}t}
    \int_0^{\infty} x^{-1/2}(x+1)^b
    e^{-\frac{z^2}{2}x}dx.
\end{align*}
By the binomial theorem, 
\begin{align*}
\int_0^{\infty} x^{-1/2}(x+1)^b
    e^{-\frac{z^2}{2}x}dx
    =\sum_{k=0}^b {b \choose k} \int_0^\infty x^{k-\frac{1}{2}}e^{-\frac{z^2}{2}x} dx \\ 
    = \sum_{k=0}^b {b \choose k} 
    \left(\frac{z^2}{2}\right)^{-k-\frac12}
    \Gamma\left(k+\frac12\right).
\end{align*}
Noting that 
\[
\Gamma\left(k+\frac12\right) 
=\Gamma\left(\frac12\right) \prod_{j=1}^{k} \left(j-\frac{1}{2}\right)
= \sqrt{\pi} 2^{-k} (2k-1)!!
\]
we arrive at the conclusion. 
\end{proof}
The next result is the most important technical tool in our approach, we derive asymptotic expansions for sums containing combinations of polynomial and exponential functions. This allows to deal with every term in the asymptotic expansion for $\pr(S_n=x)$. 
\begin{proposition} \label{prop:basisconvolution}
There exist polynomials $P_{k,j,\ell}$ each of degree $k$ such that 
\begin{align}\label{prop:mainconv}
&\sum_{k=2}^{n-1}
        \frac{z^m}{k^{j+1/2}(n-k)^{\ell+1/2}}
        e^{-\frac{z^2}{2 (n-k)}}
=
    {\rm sgn}(z)\sqrt{2\pi}
    e^{-\frac{z^2}{2n}}
    \frac{Q_{j,\ell,m}(z/\sqrt{n})}{n^{\ell+j-m/2}}
        \\
\nonumber   
&\hspace{1cm}+
    e^{-\frac{z^2}{2n}}
    \frac{z^m}{n^{\ell+1/2}}
    \sum_{k=0}^{\lceil \frac{r+m}{2}\rceil - \ell+1}
        \frac{P_{k,j,\ell}(z^2/n)}{n^{k}}
+
    O\left(\frac{1}{(\min(\sqrt{n},1+|z|))^{r+3}}\right),
\end{align}
where
\begin{align*}
    Q_{j,\ell,m}(t)
=
    t^m
    \sum_{q=0}^{j}
        \gamma_{q,j,\ell} t^{2q}
        \sum_{k=0}^{\ell+j+q-1}
            \frac{(2k-1)!!
            \binom{\ell+j+q-1}{k}}
            {t^{2k+1}}
\end{align*}
and the reals $\gamma_{q,j,\ell}$ can be computed recursively; the corresponding expressions are given in \eqref{eq:gammas} below.
\end{proposition}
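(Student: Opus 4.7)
The plan is to split $\sum_{k=2}^{n-1}(\cdot) = S_1 + S_2$ at $k = \lfloor n/2\rfloor$ and treat each half by a different method. The small-$k$ half $S_1$ will supply the bulk of the polynomial series $\sum_k P_{k,j,\ell}(z^2/n)/n^k$, while the large-$k$ half $S_2$ will produce the $\mathrm{sgn}(z)\sqrt{2\pi}e^{-z^2/(2n)}Q_{j,\ell,m}(z/\sqrt{n})$ contribution together with additional polynomial corrections that merge into the same series.

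For $S_1 = \sum_{k=2}^{\lfloor n/2\rfloor}(\cdot)$, Lemma~\ref{lem:secondhalfsum} applies directly with $v_k = k^{-(j+1/2)}$ (so $a = j+1/2 \notin \mathbb{N}$, $b = \ell+1/2$, $d = 0$, $\delta_a = 0$). This yields
\begin{align*}
S_1 = \frac{z^m}{n^{\ell+1/2}}\,e^{-z^2/(2n)}\sum_{\nu=0}^{j-1}\frac{K_\nu(z^2/n)}{n^\nu} + O\!\left(\frac{1}{n^{j+\ell-m/2}}\right),
\end{align*}
providing the first batch of polynomial terms.

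For $S_2 = \sum_{k=\lfloor n/2\rfloor+1}^{n-1}(\cdot)$, I substitute $s=n-k$ to obtain $S_2 = \sum_{s=1}^{\lfloor n/2\rfloor -1}\frac{z^m}{(n-s)^{j+1/2}s^{\ell+1/2}}e^{-z^2/(2s)}$. Since $s\le n/2$, I decompose $(n-s)^{-j-1/2}=(n-s)^{-1/2}(n-s)^{-j}$ and Taylor-expand the non-singular factor $(1-s/n)^{-j}=\sum_{p\ge0}\binom{j+p-1}{p}(s/n)^p$ to a sufficient order $R$. Each truncated term reduces $S_2$ to a finite linear combination of sums of the form $\sum_s s^{p-\ell-1/2}(n-s)^{-1/2}e^{-z^2/(2s)}$. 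Each such sum is then approximated by its integral; after the rescaling $u=s/n$ and extension of the integration range to $[0,1]$, the resulting integral $\int_0^1 u^{p-\ell-1/2}(1-u)^{-1/2}e^{-z^2/(2nu)}du$ is evaluated by Lemma~\ref{prop:integral} (applied with $z$ replaced by $z/\sqrt{n}$) as $\mathrm{sgn}(z)\sqrt{2\pi n}\,e^{-z^2/(2n)}$ times a polynomial in $n^k/z^{2k+1}$. Collecting these contributions across $p$, weighted by the Taylor coefficients, assembles the $Q_{j,\ell,m}(z/\sqrt{n})$ structure with recursively computable $\gamma_{q,j,\ell}$.

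The main technical obstacle is the careful treatment of the $(1-u)^{-1/2}$ singularity at $u=1$: both the sum-to-integral approximation and the extension of the integration range to $[0,1]$ must be handled by an Euler--Maclaurin-type argument adapted to the singular boundary. The boundary corrections generated by this process, together with the Taylor remainders in the expansion of $(1-s/n)^{-j}$ and with local Taylor expansions of the exponential near $u=1$, produce additional polynomial-in-$z^2/n$ terms; these are combined with the $K_\nu$ from $S_1$ to yield the full $P_{k,j,\ell}$ series with summation range extended up to $\lceil(r+m)/2\rceil-\ell+1$. Choosing the Taylor truncation order $R$ consistently with the target remainder, and matching the error estimates from Lemmas~\ref{lem:firsthalfsum}--\ref{lem:secondhalfsumgeneral} so that the combined errors fit within $O(\min(\sqrt{n},1+|z|)^{-(r+3)})$, is the essential bookkeeping step.
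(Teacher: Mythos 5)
Your overall toolkit (Euler--Maclaurin corrections, Lemma~\ref{prop:integral} for the closed form, Taylor expansions in $k/n$) is the right one, but the decision to split at $k=\lfloor n/2\rfloor$ and close each half separately creates a genuine gap. For $S_1$ you invoke Lemma~\ref{lem:secondhalfsum} with $v_k=k^{-(j+1/2)}$, i.e.\ $a=j+1/2$; that lemma can only extract the constants $g_\lambda=\sum_k k^{\lambda}v_k$ for $\lambda\le j-1$ (beyond that the series diverges), so it returns only $j$ polynomial terms and an error of order $n^{-(j+\ell-m/2)}$. This error is in general far larger than the required $O\bigl((\min(\sqrt n,1+|z|))^{-(r+3)}\bigr)$ and is even comparable to the $Q_{j,\ell,m}$ main term itself (e.g.\ for $j=1,\ell=0,m=0$ it is $O(n^{-1})$ while the claimed remainder is $O(n^{-(r+3)/2})$). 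The lost mass is exactly the growing partial sums $\sum_{k\le n/2}k^{\lambda-j-1/2}$ with $\lambda\ge j$, which are not expressible as $e^{-z^2/2n}$ times a polynomial in $z^2/n$ with integer powers of $n$; they are precisely what must recombine with the other half to produce the negative powers of $z/\sqrt n$ and the ${\rm sgn}(z)$ in $Q_{j,\ell,m}$. Symmetrically, on the $S_2$ side your ``extension of the integration range to $[0,1]$'' adds $\int_{1/2}^1$, a contribution of the same order as the main term which lives over exactly the range covered by $S_1$; the proposal never explains how this double-counted piece cancels against the unexpanded part of $S_1$, and this is not a bookkeeping detail but the crux of the matter.

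The paper avoids the problem by never splitting the range: it applies Euler--Maclaurin to the whole sum $\sum_{k=1}^{n-2}f_{j,\ell,m}(k)$ (with the remainder integral handled by Lemma~\ref{lem:eulerintegralrest}, where the half-range lemmas are legitimately used because there the summand carries an extra decay), then runs the integration-by-parts recursion \eqref{eq:mrecurent} to reduce $M_{j,\ell,m}$ to the $j=0$ integrals $M_{0,\ell+j+q,m+2q}$ --- this recursion is also what produces the specific coefficients $\gamma_{q,j,\ell}$ of \eqref{eq:gammas}, which your binomial expansion of $(1-s/n)^{-j}$ would not reproduce --- and finally writes $\int_1^{n-1}=\int_0^n-\int_0^1-\int_{n-1}^n$, where the \emph{full-range} integral $\int_0^n$ is evaluated exactly by Lemma~\ref{prop:integral} and only the two unit-length end integrals need expanding. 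To repair your argument you would either have to keep the two halves coupled and prove that the $O(n^{-(j+\ell-m/2)})$ defect of $S_1$ is exactly the $\int_{1/2}^1$ piece you add to $S_2$ (up to an admissible error), or abandon the split and reduce to a full-range integral as the paper does.
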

\begin{remark}
It is clear that some polynomial parts of the functions $Q_{j,\ell,m}$
can be included in $P_{k,j,\ell}$. We do not do that since the polynomials $P_{k,j,\ell}$ will be not important for our final result.
More precisely, we show later that only the functions $Q_{j,\ell,m}$
will become a part of the asymptotic expansion in Theorem~\ref{thm:lattice}.\hfill$\diamond$
\end{remark}
\begin{proof}[Proof of Proposition~\ref{prop:basisconvolution}]
We first notice that 
\begin{align*}
    \sum_{k=2}^{n-1} 
        \frac{z^m}{k^{j+1/2}(n-k)^{\ell+1/2}} 
        e^{-\frac{z^2}{2 (n-k)}} 
= 
    \sum_{k=1}^{n-2} 
        \frac{z^m}{(n-k)^{j+1/2}k^{\ell+1/2}}
        e^{-\frac{z^2}{2 k}}.
\end{align*}
Let us define
\begin{align*}
    f_{j,\ell,m}(u) 
:=
    \frac{z^m}{(n-u)^{j+1/2}u^{\ell+1/2}}
    e^{-\frac{z^2}{2 u}}.
\end{align*}
By the Euler-Maclaurin formula (see, for example,  Gel'fond's book \cite{Gelfond}),
\begin{multline}
\label{eq:EM}
    \sum_{k=1}^{n-2} f_{j,\ell,m}(k)
=
    \int_{1}^{n-1} 
        f_{j,\ell,m}(t)dt 
\;
+
    \sum_{\nu=1}^{p-1} 
        \frac{B_\nu}{\nu!}\big(f_{j,\ell,m}^{(\nu-1)}(n-1) - f_{j,\ell,m}^{(\nu-1)}(1)\big) 
\\
-
    \frac{1}{p!} 
    \int_0^1 
        \big[ B_p(1-t) - B_p \big]
        \sum_{k=1}^{n-2} 
            f_{j,\ell,m}^{(p)}(k+t)dt,
\end{multline}
where $B_\nu$ and $B_\ell(t)$ are Bernoulli numbers and  Bernoulli polynomials respectively.

Due to Lemma~\ref{lem:eulerintegralrest} below,  there exist polynomials $P_{i,j,\ell}$ of degree $i$ such that
\begin{multline} 
\label{eq:eulerintegralrest2}
    \int_0^1 \big[B_p(u) - B_p\big] 
        \sum_{k = 1}^{n-2} 
            f^{(p)}_{j,\ell,m}(k+1-u)du
\\
=
    e^{-\frac{z^2}{2n}} 
    \frac{z^m}{n^{\ell+1/2}}
    \sum_{i=0}^{j+p-1}
        \frac{P_{i,j,\ell}(z^2/n)}{n^{i}} 
+
O\left(\frac{1}{\min (\sqrt{n},1+|z|)^{2(j+\ell+p)-m}}\right).
\end{multline}
We next consider the second summand on the right-hand side of \eqref{eq:EM}.

It is immediate from the definition of $f_{j,\ell,m}$
that 
\begin{align} \label{eq:fderivative}
    \frac{d}{du} f_{j,\ell,m}
=
    \frac{1}{2} f_{j,\ell+2,m+2} 
+
    (j+1/2)f_{j+1,\ell,m} 
-
    (\ell+1/2)f_{j,\ell+1,m}.
\end{align}
Notice that for every $a\ge0$,
\begin{align}
\label{eq:simple-ineq}
    e^{-c\frac{x^2}{n}}\frac{x^{a}}{n^{a/2}} \le C(a).
\end{align}
This bound implies that for every $d\ge1$ there exists $c_d$ such that 
\begin{align*}
    f_{j,\ell,m}(1) 
=
    \frac{z^m}{(n-1)^{j+1/2}}  
    e^{-\frac{z^2}{2}}
\le
    |z|^m e^{-\frac{z^2}{2}}
\le
    \frac{c_d}{1+|z|^d},\quad n\ge2.
\end{align*}
Combining this with \eqref{eq:fderivative} we conclude that for all $d>0$, $p\ge1$ there exists $c_{p,d}$
such that
\begin{equation}\label{eq:fderivestim1}
    \left|
    \frac{d^\nu}{d u^\nu}f_{j,\ell,m}(u)\big|_{u=1}
    \right|
\le
    \frac{c_{p,d}}{1+|z|^{d}}, \quad \nu\le p,\ n\ge2.
\end{equation}
To analyze the values of $f_{j,\ell,m}$ and of its derivatives at point $n-1$ we notice that, uniformly in $z$,
\begin{align}
\label{eq:another-expan}
\nonumber
    z^m
    e^{-\frac{z^2}{2(n-1)}} 
=&
    z^m
    e^{-\frac{z^2}{2n}} 
    e^{-\frac{z^2}{2(n-1)n}} 
\\
=&
    z^m
    e^{-\frac{z^2}{2n}}
    \left(
        \sum_{i=0}^{d-1} 
            \left(\frac{z^2}{n}\right)^{i}
            \frac{(-1)^i}{i! (2 (n-1))^i}
    \right)
+
    O\left( \frac{1}{n^{d-m/2}}\right).
\end{align}
Here we have used the Taylor formula and \eqref{eq:simple-ineq} to estimate the remainder.

Therefore, there exist polynomials $P^{(0)}_{q,j,\ell}$ such that, uniformly in $z$,
\begin{multline} \label{eq:fderivative2}
    f_{j,\ell,m}(n-1) 
=
    \frac{z^m}{(n-1)^{\ell+1/2}} 
    e^{-\frac{z^2}{2(n-1)}} 
\\
=
    e^{-\frac{z^2}{2n}}
    \frac{z^m}{n^{\ell+1/2}}
    \sum_{q=0}^{d-1} 
        \frac{P^{(0)}_{q, j, \ell}(z^2/n)}{n^{q+1/2}}
+
    O\left( \frac{1}{n^{d+\ell - m/2 +1/2}}\right).
\end{multline}
Taking into account \eqref{eq:fderivative} we infer that similar decompositions hold also for derivatives $f_{j,\ell,m}^{(\nu)}$.
Combining now \eqref{eq:fderivative}, \eqref{eq:fderivestim1} and \eqref{eq:fderivative2} and choosing $p$ and $d$ sufficiently large, we obtain
\begin{multline} \label{eq:derivativesumestim}
    \sum_{\nu=1}^{p-1}
        \frac{B_\nu}{\nu!}
        \big(f_{j,\ell},m^{(\nu-1)}(n-1) - f_{j,\ell,m}^{(\nu-1)}(1)\big) 
\\= 
    e^{-\frac{z^2}{2n}}
    \frac{z^m}{n^{\ell+1/2}}
    \sum_{q=0}^{\lceil \frac{r+m}{2}\rceil-\ell+1}
        \frac{P^{(1)}_{q,j,\ell}(z^2/n)}{n^{q}}
+
    O\left(\frac{1}{(\min(\sqrt{n},1+|z|))^{r+3}}\right).
\end{multline}
Thus, it remains to analyse the integral summand on the right-hand side of \eqref{eq:EM}. Set
\begin{align*}
    M_{j,\ell,m}
:=
    \int_{1}^{n-1} f_{j,\ell,m}(u) du.
\end{align*}
Integrating by parts, we obtain
\begin{multline*}
    (1/2-j)M_{j,\ell,m} 
=
    \int_1^{n-1} 
        \frac{z^m}{(n-u)^{j-1/2}}
        \left(
            \frac{-(\ell+1/2)}{u^{\ell+3/2}}
        +
            \frac{z^2}{2  u^{\ell+5/2}}
        \right)
    e^{-\frac{z^2}{2  u}} du\\
    -\frac{z^m}{(n-u)^{j-1/2}u^{\ell+1/2}}
    e^{-\frac{z^2}{2  u}}\bigg|_{1}^{n-1}
\end{multline*}
or, equivalently,
\begin{align} \label{eq:mrecurent}
   (1/2-j)M_{j,\ell,m} 
=&
    \frac{1}{2}
    M_{j-1,\ell+2,m+2} 
-
    (\ell+1/2)M_{j-1, \ell +1,m}
\\ \nonumber
+&
    \frac{z^m}{(n-1)^{\ell+1/2}} 
    e^{-\frac{z^2}{2  (n-1)}} 
-
    \frac{z^m}{(n-1)^{j+1/2}}
    e^{-\frac{z^2}{2 }}.
\end{align}
Recalling that the terms in the second line have already been studied, we obtain 
\begin{multline} \label{eq:mjellm}
    M_{j,\ell,m} 
=
    \sum_{q=0}^{j}
        \gamma_{q,j,\ell}
    M_{0,\ell+j+q,m+2q} 
\\
+
    e^{-\frac{z^2}{2n}} 
    \frac{z^m}{n^{\ell+1/2}}
    \sum_{q=0}^{\lceil \frac{r+m}{2} \rceil-\ell+1}
        \frac{P^{(2)}_{q,j,\ell}(z^2/n)}{n^{q}}
+
    O\left(\frac{1}{(\min(\sqrt{n},1+|z|))^{r+3}}\right)
\end{multline}
with some polynomials $P^{(2)}_{q,\ell,j}$ of degree $q$ and some reals $\gamma_{q,j,\ell}$.

Next combining equations \eqref{eq:mjellm}, \eqref{eq:eulerintegralrest2} and \eqref{eq:derivativesumestim}, we have
\begin{multline} \label{eq:lem:interimresults}
    \sum_{k=2}^{n-1} \frac{z^m}{k^{j+1/2}(n-k)^{\ell+1/2}} 
    e^{-\frac{z^2}{2 (n-k)}} 
\\
= 
    \sum_{q=0}^{j}
        \gamma_{q,j,\ell}
    \int_1^{n-1}
    \frac{z^{m+2q}}{\sqrt{n-u}}u^{-\ell-j-q-1/2}e^{-\frac{z^2}{2 u}}du
\\
+
    e^{-\frac{z^2}{2n}} 
    \frac{z^m}{n^{\ell+1/2}}
    \sum_{q=0}^{\lceil \frac{r+m}{2} \rceil-\ell+1}
        \frac{P^{(3)}_{q,j,\ell}(z^2/n)}{n^{q}}+
    O\left(\frac{1}{(\min(\sqrt{n},1+|z|))^{r+3}}\right)
\end{multline}
with some polynomials $P^{(3)}_{q,j,\ell}$ of degree $q$ and some reals $\gamma_{q,j,\ell}$.

To deal with the integrals in the previous formula, we shall use the equality
\begin{multline}\label{eq:integralsplitting}
    \int_1^{n-1}
        \frac{z^m}{\sqrt{n-u}}u^{-b-1/2}e^{-\frac{z^2}{2 u}}du 
=
    \int_1^{n-1} f_{0,b,m}(u)du
\\
=
    \int_0^{n} f_{0,b,m}(u)du
-
    \int_0^1 f_{0,b,m}(u)du
-
    \int_{n-1}^n f_{0,b,m}(u)du.
\end{multline}
For every fixed $b$ and $z^2 \ge 2b+1$ the function $f_{0,b,m}(u)=u^{-b-1/2}e^{-\frac{z^2}{2 u}}$ is increasing on $(0,1)$. Hence the integral over $[0,1]$ does not exceed the value of $f_{0,b,m}(u)$ at $u=1$. Consequently, for any $d > 0$ it holds that
\begin{align} \label{eq:firstendintegral}
    \int_0^1 f_{0,b,m}(u)du
\le 
    \frac{|z|^m}{\sqrt{n-1}}
    e^{-\frac{z^2}{2}}
     = O\left(|z|^{-d}\right).
\end{align}
For the integral over $[n-1,n]$ we substitute $u=n-u$
to get
\begin{align*}
    \int_{n-1}^n 
        f_{0,b,m}(u)du
=
    \int_0^1 
        \frac{z^m}{\sqrt{u}(n-u)^{b+1/2}}
        e^{-\frac{z^2}{2 (n-u)}}
    du.
\end{align*}
Similarly to \eqref{eq:another-expan} we have
\begin{multline*}
    z^m
    e^{-\frac{z^2}{2(n-u)}} 
=
    z^m
    e^{-\frac{z^2}{2n}}
\cdot
    e^{-\frac{uz^2}{2n(n-u)}}
\\
=
    e^{-\frac{z^2}{2 n}}
    z^m
    \sum_{\nu=0}^{ d-1 }
        \frac{(-1)^\nu}{2^\nu}
        \frac{u^\nu z^{2\nu}}{n^\nu(n-u)^\nu}
+
    O\left( \frac{1}{n^{d-m/2}} \right).
\end{multline*}
Then
\begin{multline*}
    \int_0^1 
        \frac{z^m}{\sqrt{u}(n-u)^{b+1/2}}
        e^{-\frac{z^2}{2 (n-u)}}
    du
\\
=
    e^{-\frac{z^2}{2 n}}
    z^m
    \sum_{\nu=0}^{ d-1 }\frac{(-1)^\nu}{2^\nu}
        \frac{z^{2\nu}}{n^\nu}
        \int_0^1  \frac{u^{\nu-1/2}}{(n-u)^{b+\nu+1/2}}du
\\
+
    O(n^{-d-b-1/2+m/2}).
\end{multline*}
Expanding $(n-u)^{-b-\nu-1/2}$ into a Taylor series, we conclude that there exists $\theta(u) \in [0,u] \subset [0,1]$ such that 
\begin{multline*}
    \int_0^1  
        \frac{u^{\nu-1/2}}{(n-u)^{b+\nu+1/2}}dt
=
    \sum_{\mu=\nu}^{d-1} 
        \frac{C_{b,\nu,\mu}}{n^{b+\mu+1/2}} 
        \int_0^1 u^{\mu-1/2}dt 
\\
+
    C_{b,\nu,d}
        \int_0^1 \frac{u^{d-1/2}}{(n-\theta(u))^{b+d+1/2}}dt
\\
=
    \sum_{\mu=\nu}^{d-1} 
        \frac{C_{b,\nu,\mu}}{(\mu+1/2)n^{b+\mu+1/2}}
+
    O\left( n^{-d-b-1/2}\right).
\end{multline*}
Crossing now $d=\lceil \frac{r+m}{2} \rceil - b+1$, we arrive at the equality
\begin{multline} \label{eq:secondendintegral}
    \int_0^1 
        \frac{z^m}{\sqrt{u}(n-u)^{b+1/2}}
        e^{-\frac{z^2}{2 (n-u)}}
    dt
=
    e^{-\frac{z^2}{2 n}}
    \frac{z^m}{n^{b+1/2}}
    \sum_{\nu=0}^{\lceil \frac{r+m}{2} \rceil-b}
        \frac{P^{(4)}_{\nu,b}(z^2/n)}{n^{\nu}}
\\
+
    O\left(\frac{1}{n^{(r+3)/2}}\right),
\end{multline}
where $P^{(4)}_{\nu,b}$ are again some polynomials of degree $\nu$. 

Substituting $u=t/n$, we obtain
\begin{align*}
    \int_0^{n} f_{0,b,m}(t)dt
&=
    n\int_{0}^1 f_{0,b,m}(un)du 
\\
&=
    n\int_0^1 
    \frac{z^m}{(n-un)^{1/2}}
    {(un)^{-b-1/2}}
    e^{-\frac{z^2}{2  n u}}du 
\\
&=
    \frac{z^m}{n^{b}}
    \int_0^1 
    \frac{1}{\sqrt{1-u}} 
    {u^{-b-1/2}}
    e^{-\frac{z^2}{2un}}du.
\end{align*}
By Lemma \ref{prop:integral}, for every $b\ge 1$ we have
\begin{align*}
    \frac{z^m}{n^{b}}
    \int_0^1 
    \frac{1}{\sqrt{1-u}} 
    {u^{-b-1/2}} 
    e^{-\frac{z^2}{2un}}du 
=
    {\rm sgn}(z)
    \sqrt{2\pi} 
    e^{-\frac{z^2}{2n}}
    \frac{z^m}{n^b}
    \sum_{k=0}^{b-1} 
        \frac{
       (2k-1)!! \binom{b-1}{k}}
       {(z/\sqrt{n})^{2k+1}};
\end{align*}
recall that $(-1)!!=1$.
Hence, using \eqref{eq:firstendintegral} and \eqref{eq:secondendintegral}, we obtain
\begin{multline*} 
    \int_1^{n-1}
    \frac{z^m}{\sqrt{n-u}}u^{-b}e^{-\frac{z^2}{2 u}}du
\\
=
    {\rm sgn}(x)\sqrt{2\pi} 
    e^{-\frac{z^2}{2n}}
    \frac{z^m}{n^b}
    \sum_{k=0}^{b-1}
        \frac{
       (2k-1)!! \binom{b-1}{k}}
       {(z/\sqrt{n})^{2k+1}}
\\
+
    e^{-\frac{z^2}{2 n}}
    \frac{z^m}{n^{b+1/2}}
    \sum_{\nu=0}^{\lceil \frac{r+m}{2} \rceil -b+1}
        \frac{P^{(4)}_{\nu,b}(z^2/n)}{n^{\nu}}
+
    O\left(\frac{1}{(\min(\sqrt{n},1+|z|))^{r+3}}\right).
\end{multline*}
Applying this representation to the integrals on the right-hand side of \eqref{eq:lem:interimresults}, we obtain
\begin{multline}\label{eq:detaileddocomp}
    \sum_{q=0}^{j}
        \gamma_{q,j,\ell}
    \int_1^{n-1}
    \frac{z^{m+2q}}{\sqrt{n-u}}u^{-\ell-j-q-1/2}e^{-\frac{z^2}{2 u}}du
\\
=
    {\rm sgn}(z)e^{-\frac{z^2}{2n}}
    \frac{\sqrt{2\pi} z^m}{n^{\ell+j}}
    \sum_{q=0}^{j}
        \gamma_{q,j,\ell}
        \frac{z^{2q}}{n^q}
    \sum_{k=0}^{\ell+j+q-1} 
        \frac{(2k-1)!! \binom{\ell+j+q-1}{k}}
        {(z/\sqrt{n})^{2k+1}}
\\
+
    e^{-\frac{z^2}{2n}}
    \frac{z^m}{n^{\ell+j}}
    \sum_{q=0}^{j}
        \gamma_{q,j,\ell}
        \frac{z^{2q}}{n^q}
    \sum_{k=0}^{\lceil \frac{r+m}{2} \rceil -\ell-j-q+1}
        \frac{P^{(4)}_{k,\ell+j+q}(z^2/n)}{n^{k+1/2}}
\\
+
    O\left(\frac{1}{(\min(\sqrt{n},1+|z|))^{r+3}}\right).
\end{multline}
Denoting
\begin{align} \label{eq:defqjellm}
    Q_{j,\ell,m}(t)
=
    t^m
    \sum_{q=0}^{j}
        \gamma_{q,j,\ell} t^{2q}
        \sum_{k=0}^{\ell+j+q-1}
            \frac{(2k-1)!!
            \binom{\ell+j+q-1}{k}}
            {t^{2k+1}}
\end{align}
we deduce that there exist polynomials $P^{(5)}_{k,j,\ell}$ of degree $k$ such that
\begin{align*}
    &\sum_{q=0}^{j}
        \gamma_{q,j,\ell}
    \int_1^{n-1}
        \frac{z^{m+2q}}{\sqrt{n-k}}k^{-\ell-j-q-1/2}e^{-\frac{z^2}{2 k}}dk
=
    {\rm sgn}(z)\sqrt{2\pi}e^{-\frac{z^2}{2n}}
    \frac{Q_{j,\ell,m}(z/\sqrt{n})}{n^{\ell+j-m/2}}
    \\
&\hspace{1cm}+
    \frac{z^m}{n^{\ell+j+1/2}}e^{\frac{z^2}{2n}}
    \sum_{k=0}^{
    \lceil \frac{r+m}{2}\rceil - \ell+1}
        \frac{P_{k,j,\ell}^{(5)}(z^2/n)}{n^{k}}
+
    O\left(\frac{1}{(\min(\sqrt{n},1+|z|))^{r+3}}
    \right).
\end{align*}
Combining this with \eqref{eq:lem:interimresults} we obtain the desired equality.
\end{proof}
\begin{lemma} \label{lem:qjellsmall}
For the functions $Q_{j,\ell,m}$ defined in Proposition~\ref{prop:basisconvolution} one has
\begin{align} \label{eq:qjellsmall}
    e^{-\frac{z^2}{2n}}
    \frac{Q_{j,\ell,m}(z/\sqrt{n})}{n^{\ell+j-m/2}}
=
    O\left(\frac{1}{(\min(\sqrt{n},1+|z|))^{2\ell+2j-m}}
    \right).
\end{align}
\end{lemma}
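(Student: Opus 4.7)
The plan is to expand the definition of $Q_{j,\ell,m}$ in \eqref{eq:defqjellm} as a finite Laurent polynomial in $t := z/\sqrt{n}$ and estimate every monomial individually. Writing out the nested sums, $Q_{j,\ell,m}(z/\sqrt n)/n^{\ell+j-m/2}$ becomes a finite linear combination of terms of the form $c_{q,k}\,(z/\sqrt n)^{p}\,n^{-(\ell+j-m/2)}$ with $p = m + 2q - 2k - 1$, where $0\le q\le j$ and $0\le k\le \ell+j+q-1$. The bookkeeping observation that makes the argument clean is an exponent balance: once the factor $(z/\sqrt n)^p$ is separated from the $n$-power, the remaining $n$-dependence is exactly $n^{-(\ell+j-m/2)}$ for every monomial, and $p$ is an odd integer ranging over the finite set $\{m-2\ell-2j+1,\ldots,m+2j-1\}$.

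For $|z|\ge \sqrt n$, where $\min(\sqrt n,1+|z|)=\sqrt n$, I would use the uniform bound $e^{-z^2/(2n)}|z/\sqrt n|^p\le C_p$ valid for every integer $p$: for $p\ge 0$ it is the classical Gaussian decay $e^{-x^2/2}|x|^p\le C_p$, and for $p<0$ we directly have $|z/\sqrt n|^p\le 1$ because $|z/\sqrt n|\ge 1$. Summing over the finitely many monomials yields a bound of order $n^{-(\ell+j-m/2)}=\sqrt n^{-(2\ell+2j-m)}$, matching the claim in this regime.

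For $1\le |z|\le \sqrt n$, where $\min(\sqrt n,1+|z|)$ is comparable to $1+|z|$, monomials with $p\ge 0$ are trivially controlled by $1/n^{\ell+j-m/2}\le C/(1+|z|)^{2\ell+2j-m}$, since $1+|z|\le 2\sqrt n$. For monomials with $p<0$ I would use the range constraint $|p|\le 2\ell+2j-m-1$ (a consequence of $k\le \ell+j+q-1$) together with the identity
$$\frac{(\sqrt n/|z|)^{|p|}}{n^{\ell+j-m/2}} = \frac{1}{|z|^{2\ell+2j-m}}\left(\frac{|z|}{\sqrt n}\right)^{\!2\ell+2j-m-|p|}\le \frac{1}{|z|^{2\ell+2j-m}},$$
where the final step uses $|z|/\sqrt n\le 1$ and the non-negativity of the exponent $2\ell+2j-m-|p|$. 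This yields exactly the asserted bound.

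The main obstacle lies in the remaining regime $|z|<1$, where the target $(1+|z|)^{-(2\ell+2j-m)}$ is only of order one and yet the monomials with $p<0$ individually diverge as $z\to 0$. In the applications in the rest of the paper, $z$ is always the convolution variable $x+y$ from \eqref{eq:tau-decomp} with $y\ge 0$ and $x\ge 1$, so $|z|\ge 1$ and this case does not arise. A fully uniform statement would require showing cancellation among the singular monomials, which one can read off from the integral representation of $\sum_k(2k-1)!!\binom{b}{k}/t^{2k+1}$ provided by Lemma~\ref{prop:integral}; verifying the precise order of vanishing at the origin would be the delicate technical point, while all other steps reduce to the routine case analysis above.
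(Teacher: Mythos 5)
Your proof is correct and follows essentially the same route as the paper's: a monomial-by-monomial estimate of the Laurent expansion of $Q_{j,\ell,m}$, using Gaussian decay for the non-negative powers of $z/\sqrt n$ and the exponent balance (every negative power $\beta$ satisfies $-\beta\le 2(\ell+j)-m-1$, i.e.\ $\alpha+\beta/2\ge 1/2$ with $\alpha=\ell+j-m/2$) for the negative ones. The regime $|z|<1$ that you flag as the delicate point is not actually treated in the paper either --- its one-line bound for $\beta<0$ really yields $\min(|z|,\sqrt n)^{-2\alpha}$ rather than $\min(1+|z|,\sqrt n)^{-2\alpha}$ --- so your proposal is as complete as the published argument and more candid about that limitation.
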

\begin{proof}
Denote $\alpha = \ell + j - m/2$.
Recalling the definition of $Q_{j,\ell,m}$, we see that all terms on the left-hand side of \eqref{eq:qjellsmall} have the following form
\begin{align*}
    e^{-\frac{z^2}{2n}}\frac{1}{n^{\alpha}}
    \left(\frac{z}{\sqrt{n}}\right)^{\beta}.
\end{align*}
Applying \eqref{eq:simple-ineq} we get,
for every $\beta \ge 0$,
\begin{align*}
    e^{-\frac{z^2}{2 n}}\frac{1}{n^{\alpha}}
    \left(\frac{z}{\sqrt{n}}\right)^{\beta}
=
    O\left(\frac{1}{n^{\alpha}}\right).
\end{align*}
Furthermore, it follows from the definition of $Q_{j,\ell,m}$  that $\alpha +\beta/2 \ge 1/2$. Thus, for $\beta<0$ one has
\begin{align*}
    e^{-\frac{z^2}{2 n}}
    \frac{1}{n^{\alpha}}
    \left(\frac{z}{\sqrt{n}}\right)^{\beta}
=
    e^{-\frac{z^2}{2 n}}
    \frac{1}{z^{-\beta}n^{\alpha+\beta/2}}
=
    O\left(\frac{1}{(\min(\sqrt{n},1+|z|))^{2\alpha}}
    \right).
\end{align*}
Thus the lemma is proven.
\end{proof}
\begin{lemma} \label{lem:gammas}
    The coefficients $\gamma_{q,j,\ell}$ are given by
\begin{align} \label{eq:gammas}
    \gamma_{q,j,\ell}
=
    \frac{(-1)^q2^j}{2^q(2j-1)!!}
    \sum
        \prod_{i=1}^{j-q}
            (\ell+2a_i-i-1/2)
\end{align}
where the sum is taken over all subsets $(a_1, a_2, \dots, a_{j-q})$ of the set $(1,2,\dots, j)$. If $j=q$ then we consider the sum equal to $1$.
\end{lemma}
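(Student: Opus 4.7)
The plan is to obtain \eqref{eq:gammas} by unrolling the recursion \eqref{eq:mrecurent} exactly $j$ times. Solving \eqref{eq:mrecurent} for $M_{j,\ell,m}$ yields
\begin{equation*}
M_{j,\ell,m} = \frac{-1}{2j-1}\, M_{j-1,\ell+2,m+2} + \frac{2\ell+1}{2j-1}\, M_{j-1,\ell+1,m} + (\text{boundary terms}),
\end{equation*}
so each reduction branches into a type A choice (first summand, shifting $(\ell,m)\mapsto(\ell+2,m+2)$ with coefficient $\tfrac{-1}{2j-1}$) and a type B choice (second summand, shifting $(\ell,m)\mapsto(\ell+1,m)$ with coefficient $\tfrac{2\ell+1}{2j-1}$). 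After $j$ iterations every term has become an $M_{0,\cdot,\cdot}$, while the boundary contributions generated at each step are of the shape already absorbed into the two error summands displayed in \eqref{eq:mjellm}. Hence one only needs to sum the coefficients of $M_{0,\ell+j+q,m+2q}$ coming from the $M$-part of the recursion.

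I would encode each choice sequence by the subset $S=\{a_1<\cdots<a_{|S|}\}\subseteq\{1,\dots,j\}$ listing the iterations at which the B-branch is taken. If $|S|=j-q$, then there are $q$ A-choices, so the total shift is $(\ell,m)\mapsto(\ell+2q+(j-q),\,m+2q) = (\ell+j+q,\,m+2q)$, and these are precisely the paths contributing to $\gamma_{q,j,\ell}\,M_{0,\ell+j+q,m+2q}$. The per-step universal denominator at iteration $i$ is $2(j-i+1)-1$, so the product over $i=1,\dots,j$ collapses to $(2j-1)!!$; the A-branches contribute an overall $(-1)^q$; and each B-branch at iteration $i=a_\nu$ contributes a factor $2\ell_{a_\nu}+1$, where $\ell_{a_\nu}$ denotes the value of the $\ell$-coordinate when step $a_\nu$ is executed.

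The main combinatorial point is to compute $\ell_{a_\nu}$. After $i-1$ previous steps consisting of $\alpha$ A-choices and $\beta=(i-1)-\alpha$ B-choices, one has $\ell_i = \ell + 2\alpha+\beta = \ell+\alpha+(i-1)$. Specializing to $i=a_\nu$ gives $\beta=\nu-1$ and hence $\alpha=a_\nu-\nu$, so
\[
\ell_{a_\nu} = \ell + (a_\nu-\nu) + (a_\nu-1) = \ell + 2a_\nu - \nu - 1, \qquad 2\ell_{a_\nu}+1 = 2\bigl(\ell + 2a_\nu-\nu-\tfrac12\bigr).
\]
Multiplying over $\nu=1,\dots,j-q$ yields the factor $2^{j-q}\prod_{\nu=1}^{j-q}(\ell+2a_\nu-\nu-1/2)$. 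Assembling all pieces and summing over $S$ gives
\[
\gamma_{q,j,\ell} = \frac{(-1)^q\,2^{j-q}}{(2j-1)!!}\sum_S\prod_{\nu=1}^{j-q}\bigl(\ell+2a_\nu-\nu-\tfrac12\bigr) = \frac{(-1)^q\,2^j}{2^q(2j-1)!!}\sum_S\prod_{\nu=1}^{j-q}\bigl(\ell+2a_\nu-\nu-\tfrac12\bigr),
\]
which is exactly \eqref{eq:gammas}; the case $j=q$ corresponds to the empty product, consistent with the stated convention. The argument is purely combinatorial, so the only genuine obstacle is the careful index bookkeeping for $\ell_{a_\nu}$ carried out in the display above.
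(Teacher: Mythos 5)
Your proposal is correct and is essentially the paper's own argument: both rest on the recursion \eqref{eq:mrecurent} and on indexing the surviving terms by the subset of steps at which the branch lowering $\ell$ by one (rather than the branch raising $m$) is taken, which is exactly the ``heuristic'' the paper states before formalizing it. The only cosmetic difference is that the paper verifies the closed form by an inductive bijection on the normalized recursion for $\theta_{q,j,\ell}$ (splitting on whether $a_1=1$), whereas you derive it forward by tracking the running value $\ell_{a_\nu}=\ell+2a_\nu-\nu-1$ along each path; your bookkeeping checks out against the values $\gamma_{0,1,0}=1$, $\gamma_{1,1,0}=-1$, $\gamma_{0,1,1}=3$ used later in the paper.
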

\begin{proof}
To compute $\gamma_{q,j,\ell}$ we consider the following recurrent equation for that numbers which follows from \eqref{eq:mrecurent}:
\begin{align*}
    \gamma_{q,j,\ell} 
=
    \frac{\ell+1/2}{j-1/2} \gamma_{q,j-1,\ell+1}
-
    \frac{1}{2(j-1/2)} \gamma_{q-1,j-1,\ell+2}.
\end{align*}
Then, defining
\begin{align*}
    \theta_{q,j,\ell} = \gamma_{q,j,\ell}
    \left(
        \frac{(-1)^q 2^j}{2^q (2j-1)!!}
    \right)^{-1},
\end{align*}
we get an equation
\begin{align} \label{eq:thetarec}
    \theta_{q,j,\ell} 
=
    (\ell+1/2)\theta_{q,j-1,\ell+1} 
+
    \theta_{q-1,j-1,\ell+2}
\end{align}
with initial conditions 
\begin{align*}
    \theta_{0,0,\ell} = 1 
    \quad \quad \text{and} \quad \quad
    \theta_{q,j,\ell} = 0 \quad \text{for} \quad q>j.
\end{align*}
Obviously $\theta_{j,j,\ell}=1$. Next we show that for $q<j$ one has
\begin{align*}
    \theta_{q,j,\ell} 
=
    \sum
        \prod_{i=1}^{j-q}
            (\ell+2a_i-i-1/2),
\end{align*}
where the sum is taken over all ascending subsets $\{a_1, a_2, \dots, a_{j-q}\}$ of the set $\{1,2,\dots, j\}$, that is  $a_u<a_v$ for $u<v$. Heuristically, we apply \eqref{eq:thetarec} $j$ times and for each subset $\{a_1, a_2, \dots, a_{j-q}\}$ the value $a_i$ is the index of step when we “choose” multiplication by $\ell+1/2$ for $i$-th time. 

Corresponding to \eqref{eq:thetarec} bijection of summands in definition of $\theta_{q,j,\ell}$ can be seen as follows. Every summand in $\theta_{q,j,\ell}$ such that $a_1 > 1$ corresponds to exactly one summand in $\theta_{q-1,j-1,\ell+2}$ by letting $\bar{a}_i = a_i-1$:
\begin{align*}
    \prod_{i=1}^{(j-1)-(q-1)}
        ((\ell+2)+2\bar{a_i}-i-1/2)
&=
    \prod_{i=1}^{j-q}
        ((\ell+2)+2(a_i-1)-i-1/2)
\\
&=
    \prod_{i=1}^{j-q}
        (\ell+2a_i-i-1/2).
\end{align*}
For all other subsets $a_1=1$ and then for $\bar{a}_i = a_{i+1}-1$ one has:

\begin{align*}
    \prod_{i=1}^{(j-1)-q}
        ((\ell+1)+2\bar{a}_i-i-1/2)
&=
    \prod_{i=1}^{j-q-1}
        ((\ell+1)+2(a_{i+1}-1)-i-1/2)\\
&=
    \prod_{i=2}^{j-q}
        ((\ell+1)+2(a_{i}-1)-(i-1)-1/2)\\
&=
    \prod_{i=2}^{j-q}
        (\ell+2a_i-i-1/2).
\end{align*}
Additional multiplication by $(\ell+1/2)$ in the recurrent corresponds to the missing factor for $i=1$:
\begin{align*}
    \ell +2a_1-1-1/2 = \ell + 2\cdot 1-3/2=\ell+1/2.
\end{align*}
Thus, the lemma is proven.
\end{proof}

\begin{lemma} \label{lem:eulerintegralrest}
There exist polynomials $P_{i,j,\ell}$ of degree $i$
such that
\begin{multline*}
    \int_0^1 \big[B_p(u) - B_p\big] 
        \sum_{k = 1}^{n-2} 
            f^{(p)}_{j,\ell,m}(k+1-u)du 
\\
=
    e^{-\frac{z^2}{2n}} 
    \frac{z^m}{n^{\ell+1/2}}
    \sum_{i=0}^{j+p-1}
        \frac{P_{i,j,\ell}(z^2/n)}{n^{i}} 
\\
+
    O\left(\frac{1}{\min (n^{j+\ell+p-m/2}, |z|^{2(j+\ell+p)-m})}\right).
 \end{multline*}
\end{lemma}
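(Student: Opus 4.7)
The plan is to reduce the claim to direct applications of Lemmas~\ref{lem:firsthalfsum} and~\ref{lem:secondhalfsumgeneral}. Since $B_p(u)-B_p$ is a bounded polynomial on $[0,1]$, it suffices to produce a corresponding expansion for $\sum_{k=1}^{n-2} f^{(p)}_{j,\ell,m}(k+1-u)$ uniformly in $u\in [0,1]$, with coefficients that are integrable functions of $u$; integrating against $B_p(u)-B_p$ then converts those coefficients into the numerical constants forming the polynomials $P_{i,j,\ell}$.

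First I would expand $f^{(p)}_{j,\ell,m}$ by iterating \eqref{eq:fderivative}. Each application of $d/du$ produces three summands of types $f_{j,\ell+2,m+2}$, $f_{j+1,\ell,m}$ and $f_{j,\ell+1,m}$, each of which changes the shift $(a,b,c):=(j'-j,\ell'-\ell,(m'-m)/2)$ in such a way that $a+b-c$ grows by exactly $1$. Hence after $p$ iterations
\begin{equation*}
f^{(p)}_{j,\ell,m} = \sum_{(a,b,c)\in I_p} c_{a,b,c}\, f_{j+a,\ell+b,m+2c}, \qquad a+b-c=p,
\end{equation*}
for a finite set $I_p$ of non-negative triples and explicit constants $c_{a,b,c}$.

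For each triple in $I_p$ I would split $\sum_{k=1}^{n-2}$ at $\lf n/2\rf$. In the first half, where $n-k-1+u$ is of order $n$, Lemma~\ref{lem:firsthalfsum} with $a_\star=j+a+\tfrac12$, $b_\star=\ell+b+\tfrac12$ bounds the contribution by a constant multiple of $|z|^{m+2c}n^{-(j+a+1/2)}|z|^{-(2\ell+2b-1)}$; combining this with the trivial bound obtained by dropping the factor $e^{-z^2/(2k)}$, and using $a+b-c=p$, one verifies that the contribution is absorbed in the $\min$-type remainder. In the second half, the substitution $k'=n-1-k$ brings the sum into the form handled by Lemma~\ref{lem:secondhalfsumgeneral} (with $a_\star\notin\mathbb{Z}$ since it is a half-integer); that lemma yields a double sum proportional to $e^{-z^2/(2n)}z^{m+2c}/n^{\ell+b+1/2}$ whose $u$-dependent coefficients are the numbers $g_\lambda(u)$, and whose remainder is $O(n^{-(a_\star+b_\star-(m+2c)/2-1)})=O(n^{-(j+\ell+p-m/2)})$, again thanks to $a+b-c=p$.

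Integrating against $B_p(u)-B_p$ replaces each $g_\lambda(u)$ by a numerical constant, and regrouping the finite multi-indexed sum by total power of $1/n$ delivers the required expansion $\tfrac{z^m}{n^{\ell+1/2}}\sum_{i=0}^{j+p-1} P_{i,j,\ell}(z^2/n)/n^i$ with $\deg P_{i,j,\ell}=i$; the upper index $j+p-1$ is the largest power of $z^2/n$ attainable as $(a,b,c)$ ranges over $I_p$. The main obstacle I anticipate is the bookkeeping: verifying that after regrouping the polynomials have degree exactly $i$ rather than something larger, and that the first-half residue combines with the second-half error into the $\min$-form, which requires a case distinction between $|z|\lesssim\sqrt n$ (where one exploits the $|z|$-decay in Lemma~\ref{lem:firsthalfsum}) and $|z|\gg\sqrt n$ (where the bound in powers of $n$ suffices).
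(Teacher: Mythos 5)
Your proposal follows essentially the same route as the paper: iterate \eqref{eq:fderivative} to write $f^{(p)}_{j,\ell,m}$ as a finite linear combination of functions $f_{j+a,\ell+b,m+2c}$ with the invariant $a+b-c=p$, apply Lemma~\ref{lem:firsthalfsum} to the small-$k$ part and Lemma~\ref{lem:secondhalfsumgeneral} (with half-integer exponent, hence non-integer $a$) to the large-$k$ part uniformly in $u$, and then integrate the $g_\lambda$-coefficients against $B_p(u)-B_p$ to produce the constants in the polynomials $P_{i,j,\ell}$. The paper merely organizes this as a $p=0$ base case followed by a reduction of general $p$ to it via the representation $f^{(p)}_{j,\ell,m}=\sum\lambda_{q_1,q_2}f_{a,b,m+2q_1}$ with $a+b-q_1=j+\ell+p$, which is the same bookkeeping you describe.
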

\begin{proof}
We start with the case $p=0$. 
Applying Lemmas \ref{lem:firsthalfsum} and \ref{lem:secondhalfsumgeneral} with $t=1-u,a=j+1/2$ and $b=\ell+1/2$, we deduce that, uniformly in $u\in [0,1]$, 
\begin{multline*}
   \sum_{k = 1}^{n-2}  f_{j,\ell,m}(k+1-u) 
=
    \frac{z^m}{n^{\ell+1/2}}
    e^{-\frac{z^2}{2 n}}
    \sum_{\nu=0}^{j-1}
        \frac{z^{2\nu}}{n^{\nu}}
        \sum_{i=\nu}^{j-1}
            \frac{c_{i,\nu}g_{a-i}(1-u)}{n^{i}}
\\
+
    O\left(\frac{1}{n^{j+\ell-m/2}}\right)
+
    O\left( \frac{1}{n^{j+1/2}|z|^{2\ell-m-1}} \right),
\end{multline*}
where
\begin{align*}
    g_{\lambda}(t) = \sum_{k=2}^{\infty}\frac{1}{(k-t)^{\lambda}}.
\end{align*}
Letting
\begin{align*}
    G_\lambda = \int_{0}^1 \big[B_p(u) - B_p\big] g_\lambda(1-u) du
\end{align*}
we then have
\begin{multline*}
    \int_0^1 \big[B_p(u) - B_p\big] \sum_{k = 1}^{n-2} f_{j,\ell,m}(k+1-u) du 
\\
=
    \frac{z^m}{n^{\ell+1/2}}
    e^{-\frac{z^2}{2n}}
    \sum_{\nu=0}^{j-1}
        \frac{z^{2\nu}}{n^\nu}  
        \sum_{i=\nu}^{j-1}
            \frac{c_{i,\nu}G_{j+1/2-i}}{n^{i}}
\\
+
    O\left(\frac{1}{n^{j+\ell-m/2}}\right)
+
    O\left( \frac{1}{n^{j+1/2}|z|^{2\ell-m-1}} \right)
\end{multline*}
In other words,
\begin{multline} \label{eq:zeropintegralreminder}
    \int_0^1 \big[B_p(u) - B_p\big] \sum_{k = 1}^{n-2} f_{j,\ell}(k+1-u) du 
\\
=
    e^{-\frac{z^2}{2n}} 
    \frac{z^m}{n^{\ell+1/2}}
    \sum_{i=0}^{j-1}
        \frac{\widehat{P}_{i,j,\ell}(z^2/n)}{n^{i}} 
\\
+
    O\left(\frac{1}{n^{j+\ell-m/2}}\right)
+
    O\left( \frac{1}{n^{j+1/2}|z|^{2\ell-m-1}} \right),
\end{multline}
where $\widehat{P}_{i,j,\ell}$ is a polynomial of degree $i$.

If $p\ge1$ then one can use \eqref{eq:fderivative} to get the following representation:
\begin{align*}
    f_{j,\ell,m}^{(p)}(u) = \sum_{0\le q_1+q_2 \le p}\lambda_{q_1,q_2}f_{a, b, m+2q_1}(u),
\end{align*}
where
\begin{align*}
    a = j+p-q_1-q_2, \quad \quad b = \ell+q_2+2q_1.
\end{align*}
Applying \eqref{eq:zeropintegralreminder} to each term in that representation, we conclude that the main component equals 
\begin{align*}
    \lambda_{q_1, q_2}
    e^{-\frac{z^2}{2n}}
    \frac{z^m}{n^{b+1/2}}
    \sum_{i=0}^{a-1}
        \frac{\widehat{P}_{i,a,b}(z^2/n)}{n^{i+1/2}}
\end{align*}
and that the remainder term is of order
\begin{align*}
    O\left(\frac{1}{n^{j+\ell+p-m/2}}\right)
+
    O\left( \frac{1}{n^{a+1/2}|z|^{2b-m-2q_1-1)}} \right).
\end{align*}
Since $a+b-q_1 = j+\ell+p$ the remainder is of order
\begin{align*}
    O\left( 
        \frac{1}{\min 
        (
            n^{j+\ell+p-m/2},
            |z|^{2(j+\ell+p)-m)
        )}} 
    \right).
\end{align*}
After taking the linear combination, we obtain
\begin{multline*}
    \int_0^1 \big[B_p(u) - B_p\big] 
        \sum_{k = 1}^{n-2} 
            \Big(f_{j,\ell,m}(k+1-u)\Big)^{(p)}_k du 
\\
=
    e^{-\frac{z^2}{2n}} 
    \frac{z^m}{n^{\ell+1/2}}
    \sum_{i=0}^{j+p-1}
        \frac{P_{i,j,\ell}(z^2/n)}{n^{i}} 
\\
+
    O\left(\frac{1}{\min (n^{j+\ell+p-m/2}, |z|^{2(j+\ell+p)-m})}\right)
\end{multline*}
with some appropriate polynomials $P_{i,j,\ell}$ of degree $i$.
\end{proof}

\section{Asymptotic expansions for $\pr(S_n=x)$.}
In this short section we adapt an asymptotic expansion from Petrov's book \cite{Petrov} for our purposes.
\begin{proposition} \label{prop:locprob}
Under the conditions of Theorem \ref{thm:lattice} one has for $z=x/\sigma$
\begin{align*}
    \pr(S_n = x) 
=
    e^{-\frac{z^2}{2 n}}
    \sum_{j=0}^{2r+2}
        \frac{P_j^{(0)}(z)}{n^{j+1/2}}     
+
    w_n(x)
    , \quad n\ge1,
\end{align*}
where every $P^{(0)}_j(z)$ is a polynomial of degree 
$\lf\frac{3j}{2}\rf$ and $P^{(0)}_0(z) = 1/(\sigma \sqrt{2 \pi})$.\\
The remainder term $w_n$ satisfies
\begin{align} \label{eq:wbound}
    |w_n(x)| \le \frac{C}{(1+|x|)^{r+2}}.
\end{align}
\end{proposition}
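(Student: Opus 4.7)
The plan is to invoke the classical Edgeworth--Chebyshev local expansion for lattice random walks from Petrov~\cite{Petrov} and then reorganize the resulting series by powers of $n^{-1/2}$ in a purely algebraic manner.

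First I would apply Petrov's expansion. Under $\e|X|^{r+3}<\infty$ together with the maximal span one assumption it takes the form
\begin{align*}
\pr(S_n=x)
=
\frac{e^{-z^2/(2n)}}{\sigma\sqrt{2\pi n}}
\sum_{k=0}^{r+1}\frac{Q_k(z/\sqrt{n})}{n^{k/2}}
+R_n(x),
\qquad z=x/\sigma,
\end{align*}
where $Q_0\equiv 1$ and, for $k\ge 1$, $Q_k$ is a polynomial of degree at most $3k$ whose coefficients are polynomial expressions in the cumulants of $X$ up to order $k+2$. The remainder obeys a uniform bound of the form $|R_n(x)|\le C n^{-(r+2)/2}(1+|x|/\sqrt{n})^{-(r+3)}$. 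Separating into the regimes $|x|\le\sqrt{n}$ and $|x|>\sqrt{n}$ turns this into $|R_n(x)|\le C/(1+|x|)^{r+2}$, exactly the bound required for $w_n$.

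The second step is purely algebraic. Writing $Q_k(t)=\sum_{i=0}^{3k}b_{k,i}\,t^i$, each summand of Petrov's expansion becomes
\begin{align*}
\frac{e^{-z^2/(2n)}}{\sigma\sqrt{2\pi n}}\cdot\frac{Q_k(z/\sqrt{n})}{n^{k/2}}
=
e^{-z^2/(2n)}\sum_{i=0}^{3k}\frac{b_{k,i}}{\sigma\sqrt{2\pi}}\cdot\frac{z^i}{n^{(k+i+1)/2}}.
\end{align*}
Collecting the monomials that contribute to the coefficient of $n^{-(j+1/2)}$ requires $k+i=2j$, and I would therefore set
\begin{align*}
P_j^{(0)}(z)
:=
\frac{1}{\sigma\sqrt{2\pi}}
\sum_{k=\lceil j/2\rceil}^{\min(r+1,\,2j)}
b_{k,\,2j-k}\,z^{2j-k}.
\end{align*}

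It remains to verify the degree and termination claims. The joint constraints $0\le i$ and $i\le 3k$ translate, via $i=2j-k$, into $\lceil j/2\rceil\le k\le 2j$; the smallest admissible $k$ produces the monomial $z^{2j-\lceil j/2\rceil}=z^{\lfloor 3j/2\rfloor}$, which is the top-degree contribution to $P_j^{(0)}$. For $j>2r+2$ one has $\lceil j/2\rceil>r+1$, so no $k$ in the truncated Petrov sum qualifies and the outer sum terminates at $j=2r+2$. For $j=0$ only the pair $(k,i)=(0,0)$ contributes, yielding $P_0^{(0)}\equiv 1/(\sigma\sqrt{2\pi})$. The main obstacle in the argument is not the algebra but pinning down Petrov's theorem with the tail-refined remainder $(1+|x|)^{-(r+2)}$; the coarser uniform rate $o(n^{-(r+2)/2})$ is completely standard, while the sharpened form is available in Chapter VII of Petrov~\cite{Petrov} and, as a fallback, can be reconstructed by the standard Fourier-inversion argument applied to the characteristic function of $S_n$.
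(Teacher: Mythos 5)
Your proposal is correct and follows essentially the same route as the paper: invoke Petrov's local Edgeworth expansion (Theorem VII.3.16), regroup the monomials of the Edgeworth polynomials by the total power of $n^{-1/2}$ under the constraint $k+i=2j$, read off the degree bound $\lfloor 3j/2\rfloor$ and the truncation at $j=2r+2$ from the index constraints, and convert the weighted remainder $(1+|x|/\sqrt{n})^{-(r+3)}$ into the bound $C(1+|x|)^{-(r+2)}$. The only differences are notational (you keep the Gaussian prefactor inside the leading term where the paper factors out $\widehat{q}_\nu=q_\nu e^{t^2/2}$), so nothing further is needed.
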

\begin{proof}
    By Theorem VII.3.16 in \cite{Petrov} with $k = r+3$ we have,
uniformly in $t$,
\begin{align} \label{eq:asymp4}
    (1+|t|)^{r+3} \left(\sigma \sqrt{n} \pr (S_n = x) - \frac{1}{\sqrt{2\pi}} e^{-t^2/2} - \sum_{\nu=1}^{r+1} \frac{q_{\nu}(t)}{n^{\nu/2}} 
    \right)
    = o\left(\frac{1}{n^{(r+1)/2}}\right),
\end{align}
where $t=\frac{x}{\sigma \sqrt{n}}$ and
\begin{align} \label{eq:qpolydef}
    q_{\nu}(t) = \frac{1}{\sqrt{2\pi}} e^{-t^2/2}\sum H_{\nu + 2s}(t) \prod_{m=1}^\nu \frac{1}{k_m!}
\left(\frac{\gamma_{m+2}}{(m+2)!\sigma^{m+2}}\right)^{k_m}.
\end{align}
The sum is taken over all integer non-negative solutions $(k_1, k_2, \dots, k_\nu)$ to the equation $k_1 + 2k_2 + \dots \nu k_\nu = \nu$
and $s$ denotes the sum of all $k_i$'s, that is, $s = k_1 + k_2 + \dots + k_\nu$.
Here $\gamma_\nu$ is the cumulant of order $\nu$ of the random variable $X$, and $H_m(t)$ is the $m$-th Hermite polynomial:
\begin{equation*}
H_m(t) = (-1)^m e^{t^2/2}\frac{d^m}{d t^m}e^{-t^2/2}.
\end{equation*}
Every $H_m(x)$ has degree $m$ and contains $x^j$ if and only if $j$ and $m$ are of the same parity. 
Combining this with the  definition of $q_\nu(t)$ we infer that the polynomial $\widehat{q}_\nu(t):= q_\nu(t)e^{t^2/2}$ contains $x^j$ iff $j$ and $\nu$ are of the same parity. Furthermore, the degree of $\widehat{q}_\nu(t)$ is $3\nu$. In other words, there exist reals $A_{\nu,j}$ such that 
\begin{align} \label{eq:qpolycoeff}
    \widehat{q}_\nu(t)=
\sum_{j=0}^{\lf\frac{3\nu}{2}\rf}A_{\nu,j}t^{3\nu-2j}.
\end{align}
Therefore, for $t=\frac{x}{\sigma\sqrt{n}} = \frac{z}{\sqrt{n}}$ one has
\begin{align*}
    \frac{1}{\sigma\sqrt{n}}
    \sum_{\nu=1}^{r+1} 
        \frac{\hat{q}_{\nu}(\frac{z}{\sqrt{n}})}{n^{\nu/2}} 
&= \frac{1}{\sigma\sqrt{n}}
    \sum_{\nu=1}^{r+1}
    n^{-\nu/2}\sum_{j=0}^{\lf\frac{3\nu}{2}\rf}A_{\nu,j}\left(\frac{z}{\sqrt{n}}\right)^{3\nu-2j}\\
&=
    \frac{1}{\sigma}
    \sum_{\nu=1}^{r+1}
    \sum_{j=0}^{\lf\frac{3\nu}{2}\rf} 
        A_{\nu,j}
        \frac{z^{3\nu-2j}}{n^{2\nu-j+1/2}}\\
&=
    \frac{1}{\sigma}
    \sum_{m=1}^{2r+2}\frac{1}{n^{m+1/2}}
    \sum_{\nu=\max \{1, \lf\frac{m}{2}\rf\}}^{\min\{2m,r+1\}}
        A_{\nu,2\nu-m}z^{2m-\nu}.
\end{align*}
Thus, 
\begin{align*}
    \frac{1}{\sigma\sqrt{n}}
    \sum_{\nu=1}^{r+1} 
        \frac{\hat{q}_{\nu}(x/(\sigma \sqrt{n}))}{n^{\nu/2}} 
=
    \sum_{j=1}^{2r+2} \frac{P^{(0)}_j(x)}{n^{j+1/2}},
\end{align*}
where
\begin{align} \label{eq:aqjviaA}
    P_{j}^{(0)}(z) 
=
    \frac{1}{\sigma} 
    \sum_{\mu=\max\{0,2j-r-1\}}
    ^{\lf \frac{3j}{2}\rf}
        A_{2j-\mu,3j-2\mu} z^\mu.
\end{align}

Set $P^{(0)}_0(z) = 1/(\sigma \sqrt{2 \pi})$. Then dividing \eqref{eq:asymp4} by $\sigma \sqrt{n}$ one gets
\begin{align*}
    \left(
        1+\frac{|x|}{\sigma \sqrt{n}}\right)^{r+3}
    \left(
        \pr (S_n = x) 
    -
        e^{-\frac{z^2}{2n}} 
        \sum_{j=0}^{2r+2} 
            \frac{P^{(0)}_j(z)}{n^{j+1/2}} 
    \right)
    = o\left(\frac{1}{n^{(r+2)/2}}\right).
\end{align*}
In other words we have
\begin{align*}
|w_n(x)|=    \left| 
        \pr (S_n = x) 
    -
        e^{-\frac{z^2}{2 n}} 
        \sum_{j=0}^{2r+2}
            \frac{P_j^{(0)}(z)}{n^{j+1/2}} 
    \right|
\le
    \frac{C \sqrt{n}}{(\sqrt{n} + |x|)^{r+3}}
\le 
    \frac{C}{(1+|x|)^{r+2}}.
\end{align*}
Thus, the proof of the proposition is finished.
\end{proof}
\section{Asymptotic expansion for $\pr(S_n=x,\tau<n)$.}
\label{sec:4}
This section contains the main step in the proof of our Theorem~\ref{thm:lattice}. More precisely, we derive an expansion for $\pr(S_n=x,\tau<n)$, which is valid for any $x\in\mathbb{Z}$. The fact that we do not restrict our attention to positive values of $x$ allows us later to show that some elements of the expansion have zero coefficients.

Define 
\begin{align*}
    \mathcal{P}_n(x) 
:=
    \sum_{k=2}^{n-1}
        \sum_{y\le|x|/4}
        \pr(S_k = -y, \tau = k) 
        \left(
            \pr(S_{n-k} = x+y) - w_{n-k}\left(x+y\right)
        \right),
\end{align*}
where $w_n(x)$ is defined in Proposition~\ref{prop:locprob}.
Substituting this into \eqref{eq:tau-decomp} we have
\begin{align} \label{eq:maindecomp}
\nonumber
\pr (S_n = x, \tau < n) 
&=\mathcal{P}_n(x) 
+\sum_{y\le|x|/4}^\infty 
        \pr(X = -y)\pr(S_{n-1} = x+y) 
+
    R^{(0)}_n(x)\\
&\hspace{1cm} 
+\sum_{k=1}^{n-1}\sum_{y>|x|/4}
\pr(S_k=-y,\tau=k)\pr(S_{n-k}=x+y),
\end{align}
where 
\begin{align*}
    R^{(0)}_n(x) 
=
    \sum_{y\le|x|/4}
    \sum_{k=2}^{n-1}
        \pr(S_k = -y, \tau = k) 
        w_{n-k}\left(x+y\right).
\end{align*}
\begin{lemma}
\label{lem:large_y}
There exists a constant $C$ such that 
$$
\sum_{k=1}^{n-1}\sum_{y>|x|/4}
\pr(S_k=-y,\tau=k)\pr(S_{n-k}=x+y)
\le\frac{C}{\sqrt{n}(1+|x|^{r+1})}.
$$
\end{lemma}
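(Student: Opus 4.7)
The plan is to apply the strong Markov property at time $k-1$, writing
\begin{align*}
\pr(S_k=-y,\tau=k)
= \sum_{z\ge 1} \pr(S_{k-1}=z,\tau>k-1)\,\pr(X_k=-z-y),
\end{align*}
and then to sum over $y>|x|/4$, which collapses the inner sum into a single tail probability of $X_k$. After interchanging the orders of summation,
\begin{align*}
\sum_{y>|x|/4}\pr(S_k=-y,\tau=k)
= \sum_{z\ge 1}\pr(S_{k-1}=z,\tau>k-1)\,\pr(X\le -z-|x|/4).
\end{align*}

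The heart of the argument is the a priori bound
\begin{align*}
\pr(S_{k-1}=z,\tau>k-1)\le \frac{C(1+z)}{k^{3/2}},\qquad z\ge 1,\ k\ge 2,
\end{align*}
which in the lower-deviation regime $z=o(\sqrt k/\log^{1/2}k)$ follows directly from the leading term of \eqref{eq:locprobtau} together with \eqref{eq:U-bound}, and in the complementary regime from the Gaussian and Fuk--Nagaev tails of $\pr(S_{k-1}=z)$. Combined with the elementary estimate
\begin{align*}
\sum_{z\ge 1}(1+z)\,\pr(X\le -z-y_0)
\le \E\bigl[X^2;X\le -y_0-1\bigr]
\le \frac{C\,\E|X|^{r+3}}{(1+y_0)^{r+1}},
\end{align*}
which uses the finiteness of $\E|X|^{r+3}$, this gives
\begin{align*}
\pr(\tau=k,\, S_\tau\le -|x|/4)\le \frac{C}{k^{3/2}(1+|x|)^{r+1}}.
\end{align*}

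For the final step, the local CLT gives $\pr(S_{n-k}=x+y)\le C/\sqrt{n-k}$, so the double sum in the lemma is bounded by
\begin{align*}
\frac{C}{(1+|x|)^{r+1}}\sum_{k=1}^{n-1}\frac{1}{k^{3/2}\sqrt{n-k}}.
\end{align*}
Splitting at $k=n/2$, the part $k\le n/2$ is bounded by $Cn^{-1/2}\sum_{k\ge 1} k^{-3/2}=O(n^{-1/2})$, while the part $k>n/2$ is bounded by $Cn^{-3/2}\sum_{j=1}^{n/2}j^{-1/2}=O(n^{-1})$, so the whole sum is $O(n^{-1/2})$ and the lemma follows. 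The main technical obstacle is establishing the uniform bound on $\pr(S_{k-1}=z,\tau>k-1)$: \eqref{eq:locprobtau} is sharp only in the lower-deviation regime, and one must patch in classical local CLT tail bounds for larger $z$ in order to avoid picking up spurious logarithmic factors in the final estimate.
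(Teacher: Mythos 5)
Your overall argument coincides with the paper's: the same decomposition of $\pr(S_k=-y,\tau=k)$ via the Markov property at time $k-1$, the same a priori bound $\pr(S_{k-1}=z,\tau>k-1)\le C(1+z)k^{-3/2}$, the same reduction of the $y$- and $z$-sums to $\E[X^2;X<-|x|/4]$ followed by the Markov inequality, the same local limit theorem bound $\pr(S_{n-k}=x+y)\le C(n-k)^{-1/2}$, and the same convolution estimate $\sum_k k^{-3/2}(n-k)^{-1/2}\le Cn^{-1/2}$. All of these steps are fine.

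The one genuine problem is your proposed justification of the key bound $\pr(S_{k-1}=z,\tau>k-1)\le C(1+z)k^{-3/2}$. Deriving it from \eqref{eq:locprobtau} works only for $z=o(\sqrt{k}/\log^{1/2}k)$, as you say, but the patch you suggest for the complementary regime fails: for $z$ of order $\sqrt{k}$ the claimed bound is of order $k^{-1}$, whereas the unconditioned local CLT (and any Gaussian or Fuk--Nagaev tail bound for $\pr(S_{k-1}=z)$) only gives $O(k^{-1/2})$ there, since $z\asymp\sqrt{k}$ is not a large-deviation event. The missing factor $k^{-1/2}$ comes precisely from the conditioning on $\{\tau>k-1\}$ and cannot be recovered from unconditioned estimates; one needs a conditioned local bound of ballot-problem type. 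This is why the paper simply invokes \eqref{eq:conc.f}, i.e.\ Lemma~20 of \cite{VW09}, which asserts exactly $\pr(S_{k-1}=u,\tau>k-1)\le Cu k^{-3/2}$ for all $u\ge1$. If you replace your patching argument by a citation of that lemma, your proof is complete and matches the paper's.
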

\begin{proof}
By the local limit theorem,
\begin{equation}
\label{eq:llt}
\pr(S_{n-k}=x+y)\le\frac{C}{\sqrt{n-k}}.
\end{equation}
According to Lemma 20 in \cite{VW09},
\begin{align}
\label{eq:conc.f}
    \pr(S_{k-1} = u, \tau > k-1)
\le
    C\frac{u}{k^{3/2}},\quad u\ge1.
\end{align}
Combining this estimate with the total probability law, we get
\begin{align}
\label{eq:y-tau}
\nonumber
    \pr(S_k=-y,\tau=k)
&=
    \sum_{u=1}^\infty
        \pr(S_{k-1}=u,\tau>k)\pr(X=-(u+y))\\
&\le
    \frac{C}{k^{3/2}}
    \sum_{u =1}^\infty
        u\pr(X=-(u+y))
\le 
    \frac{C}{k^{3/2}}\e[-X;-X>y].
\end{align}
Combining this with \eqref{eq:llt}, we have 
\begin{align}
\label{eq:interm}
\nonumber
&\sum_{k=1}^{n-1}\sum_{y>|x|/4}
\pr(S_k=-y,\tau=k)\pr(S_{n-k}=x+y)\\
\nonumber
&\hspace{1cm}
\le C\sum_{k=1}^{n-1}(n-k)^{-1/2}k^{-3/2}
\sum_{y>|x|/4}\e[-X;-X>y]
\\
&\hspace{1cm}
\le C\sum_{k=1}^{n-1}(n-k)^{-1/2}k^{-3/2}
\e[X^2;X<-|x|/4].
\end{align}
By the Markov inequality,
$$
\e[X^2;X<-|x|/4]\le C\frac{\e|X|^{r+3}}{1+|x|^{r+1}}.
$$
Moreover,
$$
\sum_{k=1}^{n-1}(n-k)^{-1/2}k^{-3/2}
\le\frac{C}{\sqrt{n}}. 
$$
Plugging these bound into \eqref{eq:interm}, we obtain the desired result.
\end{proof}

We next derive an upper bound for the remainder term $R_n^{(0)}(x)$.
\begin{lemma} \label{lem:rest0estim}
Under the conditions of Theorem \ref{thm:lattice} one has
\begin{align*}
    |R^{(0)}_n(x)| \le \frac{C}{1+|x|^{r+2}},
    \quad x\in\mathbb{Z}.
\end{align*}
\end{lemma}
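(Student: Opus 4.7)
The plan is to estimate $R^{(0)}_n(x)$ termwise using the pointwise bound on $w_m$ from Proposition~\ref{prop:locprob}, exploiting the crucial fact that the summation range $y\le |x|/4$ keeps $x+y$ bounded away from zero in terms of $|x|$.

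First I would show that for every $y$ with $0\le y\le |x|/4$ one has $|x+y|\ge 3|x|/4$. Indeed, if $x\ge 0$ then $x+y\ge x=|x|$, while if $x<0$ then $x+y\le x+|x|/4=-3|x|/4$, so $|x+y|\ge 3|x|/4$ in either case. Substituting this into the bound $|w_m(u)|\le C/(1+|u|)^{r+2}$ from \eqref{eq:wbound}, which is uniform in $m$, yields
\begin{align*}
|w_{n-k}(x+y)|\le \frac{C}{(1+3|x|/4)^{r+2}}\le \frac{C'}{1+|x|^{r+2}}
\end{align*}
uniformly in $k\in\{2,\ldots,n-1\}$ and $y\in\{0,1,\ldots,\lfloor|x|/4\rfloor\}$.

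Second, I would pull this uniform factor outside the double sum defining $R^{(0)}_n(x)$ and use the elementary observation that $\pr(S_k=-y,\tau=k)$ is a sub-probability measure on $(k,y)\in\N\times\Z_{\ge 0}$, namely
\begin{align*}
\sum_{y\ge 0}\sum_{k=2}^{n-1}\pr(S_k=-y,\tau=k)\le \pr(\tau<\infty)\le 1.
\end{align*}
Combining this with the previous step gives exactly $|R^{(0)}_n(x)|\le C'/(1+|x|^{r+2})$, as required.

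There is no genuine obstacle: the argument reduces to the two facts that (a) the truncation $y\le |x|/4$ prevents cancellation between $x$ and $y$, and (b) the Petrov-type remainder $w_m$ decays in its argument at the required rate $(r+2)$ uniformly in the time parameter. The only point to be mindful of is that the bound \eqref{eq:wbound} is stated uniformly in $n$, which is precisely what allows us to sum freely over $k$ without accumulating any extra factor.
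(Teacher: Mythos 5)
Your proof is correct, but it takes a genuinely different and more elementary route than the paper's. The paper first derives the pointwise bound $\pr(S_k=-y,\tau=k)\le C k^{-3/2}(1+y)^{-(r+2)}$ from the concentration estimate \eqref{eq:conc.f} together with the Chebyshev/Markov inequality and the moment assumption, and then sums the product of this bound with \eqref{eq:wbound} over $k$ and $y$, using $\sum_k k^{-3/2}<\infty$ and $|x+y|\ge |x|/2$ on the truncated range. You instead observe that the events $\{\tau=k,\,S_k=-y\}$ are pairwise disjoint, so the double sum of $\pr(S_k=-y,\tau=k)$ is at most $\pr(\tau<\infty)\le 1$, which lets the uniform bound $|w_{n-k}(x+y)|\le C(1+|x|)^{-(r+2)}$ (valid since $y\ge 0$ and $y\le|x|/4$ force $|x+y|\ge 3|x|/4$) pass through directly. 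Your argument is shorter and avoids invoking \eqref{eq:conc.f} for this step; the paper's heavier machinery is not wasted, though, since the intermediate bound on $\pr(S_k=-y,\tau=k)$ with its joint decay in $k$ and $y$ is reused in several other places (e.g.\ in Lemma~\ref{lem:large_y} and in the estimates for $T_{j,q,s}$). Both arguments rest on the same two external inputs you identify: the truncation $y\le|x|/4$ preventing cancellation, and the uniformity in $n$ of the constant in \eqref{eq:wbound} from Proposition~\ref{prop:locprob}.
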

\begin{proof} By the Markov property,
\begin{align*}
    \pr(S_k = -y, \tau = k)
=
    \sum_{u=1}^\infty 
        \pr(S_{k-1} = u, \tau > k-1)
        \pr(X = -u-y).
\end{align*}
Applying \eqref{eq:conc.f} and the Chebyshev inequality, we obtain
\begin{align*}
    \pr(S_k = -y, \tau = k)
&\le
    \frac{C}{k^{3/2}}
    \sum_{u=1}^\infty 
        u
        \pr(X = -u-y)
=
    \frac{C_1}{k^{3/2}}
    \sum_{u=1}^\infty
        \pr(X \le-u-y)\\
&\le
    \frac{C_2}{k^{3/2}}
    \sum_{u=1}^\infty
        \frac{1}{(u+y)^{r+3}}
\le
    \frac{C_3}{k^{3/2}(1+y)^{r+2}}.
\end{align*}
Combining this with \eqref{eq:wbound}, we obtain
\begin{align*}
    |R^{(0)}_{n}(x)| 
&\le
    C_1
    \sum_{y\le |x|/4}
        \frac{1}{(1+y)^{r+2}(1+|x+y|)^{r+2}}
        \sum_{k=2}^{n-1}
            \frac{1}{k^{3/2}}
\\
&\le
    C_2
    \sum_{y\le|x|/4}
        \frac{1}{(1+y)^{r+2}(1+|x+y|)^{r+2}}.
\end{align*}
It remains to notice that
\begin{align*}
    \sum_{y\le|x|/4}
        \frac{1}{(1+y)^{r+2}(1+|x+y|)^{r+2}} 
&\le
    \frac{1}{(1+|x|/2)^{r+2}}
    \sum_{y\le|x|/4}\frac{1}{(1+y)^{r+2}}\\
&\le \frac{C}{1+|x|^{r+2}},
\end{align*}
which gives the desired upper bound.
\end{proof}
We now turn to the second summand on the right-hand side of \eqref{eq:maindecomp}.
\begin{lemma}
\label{lem:second-summand}
    Under the conditions of Theorem \ref{thm:lattice} it holds, with $z=x/\sigma$,
\begin{multline*}
    \sum_{y\le|x|/4} 
            \pr(X = -y)\pr(S_{n-1} = x+y)
=
    e^{-\frac{z^2}{2n}}
    \sum_{\nu=1}^{r+1}
        \frac{P_{\nu}^{(2)}\big(z/\sqrt{n}\big)}{n^{\nu/2}}
\\
+
    O\left(\frac{1}{(\min(\sqrt{n},|x|))^{r+2}}\right).
\end{multline*}
where $P_\nu^{(2)}$ are some polynomials. 
\end{lemma}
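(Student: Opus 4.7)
The plan is to invoke Proposition~\ref{prop:locprob} (with $n-1$ and $x+y$ in place of $n$ and $x$) to expand $\pr(S_{n-1}=x+y)$, and then to rearrange the resulting expression into the claimed expansion in inverse powers of $n$ in front of the principal Gaussian $e^{-z^2/(2n)}$.

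First I factor the Gaussian:
$$
e^{-\frac{(x+y)^2}{2\sigma^2(n-1)}}
= e^{-\frac{z^2}{2n}}\cdot e^{-\frac{z^2}{2n(n-1)}}\cdot e^{-\frac{y(2x+y)}{2\sigma^2(n-1)}}.
$$
I Taylor-expand the two correction exponentials to order $r+3$. For $e^{-z^2/(2n(n-1))}$ the combination with $e^{-z^2/(2n)}$ is controlled via $(z^2/n)^k e^{-z^2/(2n)}=O(1)$, giving a remainder $O(n^{-(r+3)})$. For $e^{-y(2x+y)/(2\sigma^2(n-1))}$, since $|y|\le|x|/4$ we have $|y(2x+y)|\le 9|x||y|/4$, and Taylor's theorem with Lagrange remainder gives a bound of order
$$
C\,\frac{|y|^{r+3}|x|^{r+3}}{n^{r+3}}\cdot\exp\!\Bigl(\tfrac{9x^2}{32\sigma^2(n-1)}\Bigr);
$$
multiplying by $e^{-z^2/(2n)}$ retains exponential decay in $x^2/n$ (the overall exponent is $\le -7x^2/(32\sigma^2n)$ for large $n$), so $e^{-z^2/(2n)}|x|^{r+3}\le Cn^{(r+3)/2}$. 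Summing against $\pr(X=-y)$ and using $\E|X|^{r+3}<\infty$ produces a total contribution of order $n^{-(r+3)/2}$, which is absorbed into the error.

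Next, I expand $P_j^{(0)}((x+y)/\sigma)=P_j^{(0)}(z+y/\sigma)$ as a polynomial in $y$ whose coefficients are polynomials in $z$, and expand $(n-1)^{-(j+1/2)}$ as a finite Taylor series in $1/n$. After these manipulations, each surviving term is a linear combination of expressions
$$
C\cdot e^{-\frac{z^2}{2n}}\, z^{b}\,y^{a}\,n^{-c}\,\pr(X=-y).
$$
Summing over $y\le|x|/4$ yields the truncated moments $m_a(x):=\sum_{y\le|x|/4}y^{a}\pr(X=-y)$, which I split as
$$
m_a(x)=\E\bigl[(-X)^{a};X<0\bigr]-\E\bigl[(-X)^{a};-X>|x|/4\bigr].
$$
The first piece is, for $a\le r+3$, a finite constant contributing to the coefficients of the desired polynomials $P_\nu^{(2)}(z/\sqrt n)$ after regrouping terms according to powers of $n^{-1/2}$. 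The second is bounded by Markov's inequality by $C/|x|^{r+3-a}$; together with $e^{-z^2/(2n)}|z|^{b}\le Cn^{b/2}$, it contributes an error dominated by $1/\min(\sqrt n,|x|)^{r+2}$.

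Finally, the contribution of the remainder $w_{n-1}(x+y)$ from Proposition~\ref{prop:locprob} is handled as in the proof of Lemma~\ref{lem:rest0estim}: since $|x+y|\ge 3|x|/4$ whenever $|y|\le|x|/4$, the bound \eqref{eq:wbound} gives
$$
\sum_{y\le|x|/4}\pr(X=-y)\,|w_{n-1}(x+y)|\le \frac{C}{(1+|x|)^{r+2}},
$$
which is dominated by $1/\min(\sqrt n,|x|)^{r+2}$. The main obstacle is the bookkeeping: one must carry out four simultaneous finite expansions (of the two correction exponentials, of $P_j^{(0)}(z+y/\sigma)$, and of $(n-1)^{-(j+1/2)}$), collect contributions of each inverse power $n^{-\nu/2}$ into a single polynomial $P_\nu^{(2)}(z/\sqrt n)$, and verify that all resulting remainders — truncation errors of the Taylor expansions together with the moment tails $\E[(-X)^{a};-X>|x|/4]$ — fit inside the claimed error $O(1/\min(\sqrt n,|x|)^{r+2})$.
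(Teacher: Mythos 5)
Your overall strategy is the same as the paper's: expand $\pr(S_{n-1}=x+y)$ via Proposition~\ref{prop:locprob}, split off the $w$-remainder, factor the Gaussian around $e^{-z^2/(2n)}$, Taylor-expand the correction exponentials and the polynomial $P_j^{(0)}(z+y/\sigma)$, and convert truncated $y$-moments into constants plus Markov tails. The treatment of the $w_{n-1}$ term and of the two exponential remainders is fine.

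There is, however, a genuine gap in the moment bookkeeping. You expand $e^{-y(2x+y)/(2\sigma^2(n-1))}$ uniformly to order $r+3$; the term of order $k$ contains $y^{k+i}$ with $i\le k\le r+2$, hence powers of $y$ up to $2r+4$, and multiplying by the $y$-part of $P_j^{(0)}(z+y/\sigma)$ (degree up to $\lf 3j/2\rf\le 3r+3$) pushes the exponent $a$ as high as roughly $5r+7$. For such $a$ the quantity $\E[(-X)^a;X<0]$ need not be finite under $\E|X|^{r+3}<\infty$, so your split $m_a(x)=\E[(-X)^a;X<0]-\E[(-X)^a;-X>|x|/4]$ is meaningless and the Markov bound $C/|x|^{r+3-a}$ is unavailable; your phrase ``for $a\le r+3$'' silently excludes exactly the terms that cause trouble, without saying what to do with them. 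The paper avoids this by making the truncation orders adaptive: it first shows directly that the monomials of $P_j^{(0)}(z+\hat y)$ with $y$-degree $s\ge r+3$ contribute only $O((1+|x|)^{-r-2})$, and then expands $e^{-xy/(\sigma^2 n)}$ only to order $r-s+2$ and $e^{-y^2/(2\sigma^2 n)}$ only to order $r-s-\nu+1$, so that the total power of $y$ never exceeds $r+2$ and every moment $E_a$ used is finite. Your scheme can be repaired without changing its architecture --- for $a>r+3$ one keeps the truncated moment and bounds it by $(|x|/4)^{a-r-3}E_{r+3}$, and a check on the exponents (using $q\le\lf 3j/2\rf\le 2j$) shows these terms fall into the error $O(1/\min(\sqrt n,|x|)^{r+2})$ --- but this extra argument, or the adaptive truncation, must actually be supplied.
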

\begin{proof}
Set
\begin{align*}
    \Sigma_n
&=\sum_{y\le |x|/4} \pr(X = -y)\pr(S_{n} = x+y).
\end{align*}
By Proposition \ref{prop:locprob}, 
\begin{align} \label{eq:sn-1decomp}
    \pr(S_{n} = x+y) 
&=
    e^{-\frac{(z+\y)^2}{2n}}
    \sum_{j=0}^{2r+2}
        \frac{P_j^{(0)}(z+\y)}{n^{j+1/2}} 
+
    w_{n}(x+y),
\end{align}
where $z=x/\sigma$ and $\y=y/\sigma$.
In view of \eqref{eq:wbound},
\begin{align} 
\label{eq:boundrestsumy1}
\nonumber
&\left|
        \sum_{y\le|x|/4} 
            \pr(X=-y) w_{n}(x+y) 
    \right|\\
\nonumber    
&\hspace{1cm}\le
    C
    \sum_{y\le|x|/4}
        \frac{\pr(X=-y)}{(1+|x+y|)^{r+2}}
    \le
    \frac{C2^{r+2}}{1+|x|^{r+2}}
    \sum_{y\le|x|/4 }\pr(X=-y)\\
&\hspace{1cm}\le 
    \frac{C2^{r+2}}{1+|x|^{r+2}}.
\end{align}
Therefore, 
\begin{align*}
    \Sigma_n
=
    \sum_{j=0}^{2r+2}
        \sum_{y\le\sigma |z|/4}
            \frac{P_j^{(0)}(z+\y)}{n^{j+1/2}}
            e^{-\frac{(z+\y)^2}{2n}}
            \pr(X=-y)
+
    O\left(\frac{1}{1+|x|^{r+2}}\right).
\end{align*}
\newcommand{\ca}{c_{\bar{\alpha}}}
\renewcommand{\a}{\bar{\alpha}}
For every tuple  $\a=(j,q,s,n)$ we define 
\begin{align} \label{eq:Talpha}
    T_{\a}(z)
=
    \frac{z^{q-s}}{n^{j+1/2}}
    \sum_{y\le\sigma|z|/4}
        \y^s e^{-\frac{(z+\y)^2}{2n}}
            \pr(X=-y).
\end{align}
Then, there exist constants $\ca$ such that
\begin{align*}
    \Sigma_n
=
    \sum_{j=0}^{2r+2}
        \sum_{q=0}^{\lf \frac{3j}{2} \rf}
            \sum_{s=0}^q
                \ca
                T_{\a}(x/\sigma)
+
    O\left(\frac{1}{1+|x|^{r+2}}\right),
\end{align*}
the upper limit in the second sum is $\lf \frac{3j}{2}\rf$ since, due to Proposition \ref{prop:locprob}, 
the degree of $P_j^{(0)}$ does not exceed $\lf \frac{3j}{2}\rf$.

We now show that the summands with $s\ge r+3$ are of order $(1+|x|)^{-r-2}$. Indeed, 
\begin{align*}
\sum_{y\le|x|/4}\y^se^{-\frac{(z+\y)^2}{2n}}\pr(X=-y) 
&\le e^{-\frac{z^2}{8n}} 
\sum_{y\le\sigma|z|/2}\y^s\pr(X=-y)\\
&\le C e^{-\frac{z^2}{8n}}|z|^{s-r-3}\e|X|^{r+3}.
\end{align*}
Therefore,
$$
|T_{\a}(z)|
\le C \e|X|^{r+3}
    \frac{|z|^{q-r-3}}{n^{j+1/2}}
    e^{-\frac{z^2}{8n}}
\le \frac{C}{1+|x|^{r+3}}n^{q/2-j-1/2}.  
$$
Noting that $q/2<j$ for all $q\le\lf\frac{3j}{2}\rf$,
we get the desired bound. Thus,
\begin{align*}
    \Sigma_n
=
    \sum_{j=0}^{2r+2}
        \sum_{q=0}^{\lf \frac{3j}{2} \rf}
            \sum_{s=0}^{q\wedge(r+2)}
                \ca
                T_{\a}
+
    O\left(\frac{1}{1+|x|^{r+3}}\right).
\end{align*}

In the case $s\le r+2$ we write
$$
e^{-\frac{(x+y)^2}{2\sigma^2n}}
=e^{-\frac{x^2}{2\sigma^2n}}
e^{-\frac{xy}{\sigma^2n}}
e^{-\frac{y^2}{2\sigma^2n}}
$$
and apply the decomposition 
\begin{align*}
    e^{-\frac{xy}{\sigma^2 n}}
=
    \sum_{\nu = 0}^{r-s+2}
        \frac{(-1)^\nu}{\nu!\sigma^{2\nu}}
        \frac{x^\nu y^\nu}{n^{\nu}}
+
   \psi_s(y),
\end{align*}
where
$$
|\psi_s(y)|\le C\frac{(|x|y)^{r-s+3}}{n^{r-s+3}}
\left(1+e^{-\frac{xy}{\sigma^2 n}}\right).
$$
Then we have 
\begin{align*}
&e^{-\frac{x^2}{2\sigma^2 n}}
    \sum_{y\le|x|/4}
        \frac{|x|^{q-s}y^{s}}{n^{j+1/2}}
        e^{-\frac{y^2}{2\sigma^2 n}}
        \pr(X=-y)
        |\psi_s(y)|
\\
&\hspace{1cm}\le
    Ce^{-\frac{x^2}{2\sigma^2 n}}
    \left(1+e^{\frac{x^2}{4\sigma^2 n}}\right)
    \frac{|x|^{r+q-2s+3}}{n^{j+r-s+7/2}}
    \sum_{y\le |x|/2}
        y^{r+3} \pr(X=-y)\\
&\hspace{1cm}\le
    C_1e^{-\frac{x^2}{4\sigma^2 n}}
    \frac{|x|^{r+q-2s+3}}{n^{j+r-s+7/2}}.
\end{align*}
Recalling that $q\le 2j$ and using \eqref{eq:simple-ineq},
we conclude that 
$$
e^{-\frac{x^2}{2\sigma^2 n}}
    \sum_{y\le|x|/4}
        \frac{|x|^{q-s}y^{s}}{n^{j+1/2}}
        e^{-\frac{y^2}{2\sigma^2 n}}
        \pr(X=-y)
        |\psi_s(y)|
\le\frac{C}{1+|x|^{r+2}}.
$$
Consequently,
\begin{align}
\label{eq:next_step}
\nonumber
T_{\a}
&=
    e^{-\frac{x^2}{2\sigma^2 n}}
    \sum_{\nu = 0}^{r-s+2}
        \frac{(-1)^\nu}{\nu!\sigma^{2\nu}}
        \frac{x^{q+\nu-s}}{n^{j+\nu+1/2}}
        \sum_{y\le |x|/4}
                y^{s+\nu}
                e^{-\frac{y^2}{2\sigma^2 n}}
                \pr(X=-y)\\
&\hspace{1cm}+
    O\left(\frac{1}{1+|x|^{r+2}}\right).
\end{align}
Using \eqref{eq:exp-taylor} with $R=r+3-(s+\nu)$, we have 
\begin{align*}
&\sum_{y\le |x|/4}y^{s+\nu}
e^{-\frac{y^2}{2\sigma^2 n}}\pr(X=-y)\\
&\hspace{1cm}=\sum_{\mu=0}^{\lf\frac{r-s-\nu+2}{2}\rf}
\frac{h_{2\mu}(0)}{\sigma^{2\mu}(2\mu)!}n^{-\mu}
\sum_{y\le |x|/4}y^{s+\nu+2\mu}\pr(X=-y)
+O\left(n^{(s+\nu-r-3)/2}\right).
\end{align*}
Set $E_\lambda:=\sum_{y=0}^\infty y^\lambda \pr (X=-y)$.
It follows from the Markov inequality that
$$
\sum_{y\le |x|/4}y^{s+\nu+2\mu}\pr(X=-y)
=E_{s+\nu+2\mu}+O\left(|x|^{s+\nu+2\mu-r-3}\right).
$$
Plugging these equalities into \eqref{eq:next_step}
and applying \eqref{eq:simple-ineq} to the remainder terms, we conclude that
\begin{align*}
    T_{\a} (x/\sigma)
&=
    e^{-\frac{x^2}{2\sigma^2 n}}
    \sum_{\nu+2\mu\le r-s+2}
        \frac{(-1)^\nu}{\nu!\sigma^{2\nu}}
        \frac{h_{2\mu}(0)}{\sigma^{2\mu}(2\mu)!}
        E_{s+\nu+2\mu}
        \frac{x^{q+\nu-s}}{n^{j+\nu+\mu+1/2}}
\\
&\hspace{2cm}+
    O\left(\frac{1}{1+|x|^{r+2}}\right).
\end{align*}
This implies that for every $\bar{\beta} = (j,q,s,\nu,\mu)$ there exists $c_{\bar{\beta}}$ such that
\begin{multline*}
    \Sigma_n(x/\sigma)
=   e^{-\frac{x^2}{2\sigma^2 n}}
    \sum_{j=0}^{2r+2}
    \sum_{q=0}^{\lf \frac{3j}{2} \rf}
    \sum_{s=0}^{q\wedge(r+2)}
    \sum_{\nu+2\mu\le r-s+2}
        c_{\bar{\beta}}
        \frac{x^{q+\nu-s}}{n^{j+\nu+\mu+1/2}}
    \\
+
    O\left(\frac{1}{1+|x|^{r+2}}\right).
\end{multline*}

Rearranging the terms, we finally obtain
\begin{align*}
    \Sigma_n
=
    e^{-\frac{x^2}{2\sigma^2 n}}
    \sum_{t=1}^{r+1}
        \frac{P_{t}^{(1)}\big(x/\sqrt{n}\big)}{n^{t/2}}
+
    O\left(\frac{1}{(\min(\sqrt{n},1+|x|))^{r+2}}\right),
\end{align*}
where $P_t^{(1)}$ is a polynomial of degree $3t$. For even $t$ the polynomial $P_t^{(1)}$ is odd and for odd $t$ it is even. 

Finally, using this formula for $n-1$ instead of $n$ and decomposing $(n-1)^{-t/2}$ and $e^{-\frac{x^2}{2\sigma^2(n-1)}}$, we get the desired representation.
\end{proof}
Applying Lemmata \ref{lem:large_y}, \ref{lem:rest0estim} and \ref{lem:second-summand} to the corresponding terms in \eqref{eq:maindecomp}, we obtain 
\begin{align} \label{eq:maindecomp1}
    \pr(S_n =x, \tau < n)
=
    \mathcal{P}_n(x)
+
e^{-\frac{z^2}{2n}}
    \sum_{\nu=1}^{r+1}
        \frac{P_{\nu}^{(2)}\big(z/\sqrt{n}\big)}{n^{\nu/2}}
+
    O\left(\frac{1}{(\min(\sqrt{n},|x|))^{r+2}}\right).
\end{align}
Thus, it remains to analyse $\mathcal{P}_n(x)$.
Recall that
\begin{align*}
    \mathcal{P}_n(x)
=
    \sum_{k=2}^{n-1}
        \sum_{y\le|x|/4}
        \pr(S_k = -y, \tau = k) \big(\pr(S_{n-k} = x+y) - w_{n-k}(x+y)\big)
\end{align*}
and, by Proposition \ref{prop:locprob}, 
\begin{align*}
    \pr(S_{n-k} = x+y) 
-
    w_{n-k}(x+y)
=
    e^{-\frac{(z+\y)^2}{2(n-k)}}
    \sum_{j=0}^{2r+2}
        \frac{P^{(0)}_j(z+\y)}{(n-k)^{j+1/2}},
\end{align*}
where $\y = y/\sigma$ and $z=x/\sigma$.
Consequently,
\begin{align*}
    \mathcal{P}_n(x)
= 
    \sum_{j=0}^{2r+2}
        \sum_{k=2}^{n-1}
            \sum_{y\le|x|/4}
                P^{(0)}_j(z+\y)
                e^{-\frac{(z+\y)^2}{2(n-k)}}
                \frac{\pr(S_k=-y, \tau = k)}{(n-k)^{j+1/2}}.
\end{align*}
Let $a_{q,j}$ denote the coefficients of $P_j^{(0)}$, i.e.,
\begin{align*}
    P^{(0)}_j(z+\y) 
=
    \sum_{q=0}^{\lf \frac{3j}{2}\rf}
        a_{q,j} (z+\y)^q
=
    \sum_{q=0}^{\lf \frac{3j}{2}\rf}
        \sum_{s=0}^q
            a_{q,j} 
            \binom{q}{s}
            z^{q-s}\y^s.
\end{align*}
Letting
\begin{align*}
    T_{j,q,s}(z)
= 
    z^{q-s}
    \sum_{k=2}^{n-1}
        \sum_{y\le\sigma|z|/4}
            \y^s
            e^{-\frac{(z+\y)^2}{2(n-k)}}
            \frac{\pr(S_k=-y, \tau = k)}{(n-k)^{j+1/2}},
\end{align*}
one has
\begin{align*}
    \mathcal{P}_n(x)
= 
    \sum_{j=0}^{2r+2}
        \sum_{q=0}^{\lf \frac{3j}{2}\rf}
            \sum_{s=0}^q
                a_{q,j} 
                \binom{q}{s}
                T_{j,q,s}(x/\sigma).
\end{align*}
For $s\ge r+1$
\begin{align*}
    |T_{j,q,s}(z)|
&\le
    C |z|^{q-s}
    \sum_{k=2}^{n-1}
        \frac{
        e^{-\frac{z^2}{8(n-k)}}
        }
        {(n-k)^{j+1/2}}
        \sum_{y\le \sigma |z|/4}
            \hat{y}^s
            \pr(S_k=-y,\tau=k)\\
&\le
    C |z|^{q-r-1}
    \sum_{k=2}^{n-1}
        \frac{
        e^{-\frac{z^2}{8(n-k)}}
        }
        {(n-k)^{j+1/2}}
        \sum_{y\le \sigma |z|/4}
            \hat{y}^{r+1}
            \pr(S_k=-y,\tau=k).
\end{align*}
Due to \eqref{eq:y-tau}
\begin{align*}
    \sum_{y\le \sigma |z|/4}
        \hat{y}^{r+1}
        \pr(S_k=-y,\tau=k)
&\le 
    \frac{C}{k^{3/2}}
    \sum_{y=0}^\infty
        \sum_{u =1}^\infty
            \hat{y}^{r+1}
            u\pr(X=-(u+y))\\
&\le
    \frac{C}{k^{3/2}} \e |X|^{r+3}.
\end{align*}
Hence
\begin{align*}
    |T_{j,q,s}(z)|
\le
    C |z|^{q-r-1}
    \sum_{k=2}^{n-1}
        \frac{
        e^{-\frac{z^2}{8(n-k)}}
        }
        {k^{3/2}(n-k)^{j+1/2}}.
\end{align*}
Since $2j \ge q \ge s \ge r+1$ we have by the Lemma~\ref{lem:firsthalfsum}
\begin{align*}
    \sum_{k=\lf n/2\rf}^{n-1}
        \frac{|z|^{q-r-1}
        e^{-\frac{z^2}{8(n-k)}}
        }
        {k^{3/2}(n-k)^{j+1/2}}
\le
    |z|^{q-r-1}
    \frac{\gamma_{3/2,j+1/2}}{n^{3/2}(|z|/2)^{2j}}
=
    O\left(\frac{1}{n^{3/2}(\textcolor{red}{1+}|z|)^{r+1}}
    \right).
\end{align*}
For the remaining sum using \eqref{eq:simple-ineq} and inequality $q\le 2j$ we obtain
\begin{align*}
    \sum_{k=2}^{\lf n/2\rf-1}
        \frac{
        |z|^{q-r-1}
        e^{-\frac{z^2}{8(n-k)}}
        }
        {k^{3/2}(n-k)^{j+1/2}}
\le 
    C\frac{|z|^{q-r-1}}{n^{j+1/2}}e^{-\frac{z^2}{8n}}
    \sum_{k=2}^\infty \frac{1}{k^{3/2}}
=
    O\left(\frac{1}{\sqrt{n}(\textcolor{red}{1+}|z|)^{r+1}}\right).
\end{align*}
Combining all together one has for $s\ge r+1$
\begin{align*}
    |T_{j,q,s}(z)| 
=
    O\left(\frac{1}{\min(\sqrt{n},1+|z|)^{r+2}}\right)
\end{align*}
and therefore
\begin{align} \label{eq:pxydecomp1}
    \mathcal{P}_n(x)
= 
    \sum_{j=0}^{2r+2}
        \sum_{q=0}^{\lf \frac{3j}{2}\rf}
            \sum_{s=0}^{q\wedge r}
                a_{q,j} 
                \binom{q}{s}
                T_{j,q,s}(x/\sigma)
+
    O\left(\frac{1}{\min(\sqrt{n},1+|z|)^{r+2}}\right).
\end{align}
According to the Taylor formula, there exists 
$\theta:=\theta(x,y,n,k,s)\in(0,1)$ such that 
\begin{align}\label{eq:expxydecom}
    e^{-\frac{z\y}{n-k}}
=
    \sum_{\nu=0}^{r-s}\frac{(-1)^\nu}{\nu!}\frac{z^\nu \y^\nu}{(n-k)^\nu} + 
    \frac{(-1)^{r-s+1}}{(r-s+1)!}
    e^{-\theta\frac{z\y}{n-k}} 
    \frac{z^{r-s+1}\y^{r-s+1}}{(n-k)^{r-s+1}}.
\end{align}
Hence
\begin{multline*}
    T_{j,q,s}(z) 
= 
    \sum_{\nu=0}^{r-s}\frac{(-1)^\nu}{\nu!}
    z^{q-s+\nu}
    \sum_{k=2}^{n-1}
        e^{-\frac{z^2}{2(n-k)}}
        \sum_{y\le\sigma|z|/4}^\infty
            \y^{s+\nu}
            e^{-\frac{\y^2}{2(n-k)}}
            \frac{\pr(S_k=-y, \tau = k)}{(n-k)^{j+\nu+1/2}}
\\
+\frac{(-1)^{r-s+1}}{(r-s+1)!}
   R_{q,s}(z),
\end{multline*}
where
\begin{align} \label{eq:restqs}
    R_{q,s}(z) 
=
    z^{q+r-2s+1}
    \sum_{k=2}^{n-1}
        e^{-\frac{z^2}{2(n-k)}}
        \sum_{y\le\sigma|z|/4}
            \y^{r+1}
            e^{-\theta\frac{z\y}{n-k}}
            e^{-\frac{\y^2}{2(n-k)}}
            \frac{\pr(S_k=-y, \tau = k)}{(n-k)^{j+r-s+3/2}}.
\end{align}

\begin{lemma}\label{lem:sumovery}
    Under the conditions of Theorem \ref{thm:lattice} there exist numbers $b_\ell^{(h)}$ such that, for every $h=0,1,\dots, r+1,$
\begin{align*}
   \Theta_{k}^{(h)}:
=
   \sum_{y=0}^\infty
        \left(\frac{y}{\sigma}\right)^{h}
        \pr(S_k=-y, \tau = k)
=
    \sum_{\ell=0}^{\lf \frac{r-h}{2}\rf}
        \frac{b_\ell^{(h)}}{k^{\ell+3/2}} + v^{(h)}_k,
\end{align*}
where
\begin{align*}
    |v_k^{(h)}|
\le
    \frac{C\log^{\lfloor \frac{r-h+1}{2} \rfloor} k}{k^{(r-h+4)/2}}.
\end{align*}
Moreover,
$$
    \Theta_k^{(h)}(z)
:=
    \sum_{0\le y\le\sigma|z|/4}
        \left(\frac{y}{\sigma}\right)^{h}
        \pr(S_k=-y,\tau = k)
=
    \Theta_k^{(h)}
+
    O\left(\frac{1}{k^{3/2}(1+|z|)^{r-h+2}}\right).
$$
\end{lemma}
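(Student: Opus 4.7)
The plan is to apply the strong Markov property at time $k-1$ in order to reduce the study of $\Theta_k^{(h)}$ to sums involving $\pr(S_{k-1}=u,\tau>k-1)$, for which the asymptotic expansion \eqref{eq:locprobtau} is already available. Writing
\[
\pr(S_k=-y,\tau=k)=\sum_{u=1}^\infty \pr(S_{k-1}=u,\tau>k-1)\,\pr(X=-u-y),
\]
substituting into the definition of $\Theta_k^{(h)}$ and exchanging the order of summation gives
\[
\Theta_k^{(h)}=\sum_{u=1}^\infty \pr(S_{k-1}=u,\tau>k-1)\,G_h(u),\qquad G_h(u):=\sum_{y=0}^\infty(y/\sigma)^h\,\pr(X=-u-y).
\]
The change of variable $v=u+y$ combined with Markov's inequality applied to $\e|X|^{r+3}$ produces the pointwise decay $G_h(u)\le C u^{h-r-3}$ for $u\ge 1$.

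Plugging the expansion \eqref{eq:locprobtau} into the above representation, the natural candidates for the coefficients of $k^{-\ell-3/2}$ are
\[
b_{\ell}^{(h)}:=\sum_{u=1}^\infty U_{\ell+1}(u)\,G_h(u).
\]
Using $|U_j(u)|\le C(1+u^{2j-1})$ from \eqref{eq:U-bound} together with the decay of $G_h$, the summand is of order $u^{2\ell+h-r-2}$, so this series converges absolutely in the range $\ell\le\lf(r-h)/2\rf$, which matches the stated summation range exactly. To produce a remainder $v_k^{(h)}$ of the claimed order I would split the $u$-sum at the Gaussian scale $u_k=\sqrt{k}$ and combine three estimates. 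For $u>u_k$ the uniform bound $\pr(S_{k-1}=u,\tau>k-1)\le Cu/k^{3/2}$ from \eqref{eq:conc.f} yields $O(k^{-3/2}\sum_{u>\sqrt{k}}u^{h-r-2})=O(k^{-(r-h+4)/2})$; the tails $\sum_{u>u_k}U_{\ell+1}(u)G_h(u)/k^{\ell+3/2}$ produce errors of exactly the same order by a parallel calculation; the remainder of \eqref{eq:locprobtau}, summed against $G_h$ for $u\le u_k$, contributes $(\log^{\lf r/2\rf}k/k^{(r+3)/2})\sum_{u\le\sqrt{k}}u^{h-2}$, which is already of the required order $\log^{\lf(r-h+1)/2\rf}k/k^{(r-h+4)/2}$ for $h\ge 2$, while the cases $h=0,1$ call for a somewhat more careful treatment in order not to lose a factor $\sqrt{k}$.

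For the second assertion one only needs to control the tail
\[
\Theta_k^{(h)}-\Theta_k^{(h)}(z)=\sum_{y>\sigma|z|/4}\y^h\,\pr(S_k=-y,\tau=k).
\]
Inserting the bound $\pr(S_k=-y,\tau=k)\le Ck^{-3/2}\e[-X;-X>y]$ from \eqref{eq:y-tau} (just as in the proof of Lemma~\ref{lem:large_y}), interchanging the sums in $y$ and $v=-X$, and applying Markov's inequality to the $(r+3)$-th moment of $X$ produces the asserted bound $O\bigl(k^{-3/2}(1+|z|)^{-(r-h+2)}\bigr)$. The main obstacle, in my view, is the bookkeeping in the second paragraph: the natural cutoff $u_k=\sqrt{k}$ gives the correct $k$-exponent in the main contribution, but reaching the exact log power $\lf(r-h+1)/2\rf$ for small $h$ requires either applying \eqref{eq:locprobtau} with an intermediate parameter or exploiting the asymptotic polynomial structure of the next polyharmonic function $U_{\lf(r+1)/2\rf+1}$ to extract one further leading term.
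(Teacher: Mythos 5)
Your skeleton coincides with the paper's: the decomposition via the Markov property at time $k-1$, the definition $b_\ell^{(h)}=\sum_u U_{\ell+1}(u)G_h(u)$, the convergence range $\ell\le\lf\frac{r-h}{2}\rf$ obtained from \eqref{eq:U-bound}, and the proof of the second claim via \eqref{eq:y-tau} plus Markov's inequality are all exactly what the paper does. The genuine gap is in the remainder estimate, and it is the one you half-diagnose yourself. Your primary route --- apply \eqref{eq:locprobtau} with the full parameter $r$ and split the $u$-sum at $\sqrt{k}$ --- does not deliver the stated bound on $v_k^{(h)}$: (a) for $h=0,1$ the $\rho$-term of the level-$r$ expansion contributes $O\bigl(\log^{\lf r/2\rf}k\,/\,k^{(r+3)/2}\bigr)$ even after restricting to $u\le\sqrt{k}$, which is larger than the claimed $k^{-(r-h+4)/2}$ by a factor $\sqrt{k}$ (you concede this, but it is not a bookkeeping nuisance --- the downstream application in Lemma 10 needs the full power of $k$); (b) even for $h\ge2$ your route yields the log exponent $\lf r/2\rf$ rather than $\lf\frac{r-h+1}{2}\rf$; (c) the level-$r$ expansion contains terms with $\lf\frac{r-h}{2}\rf<\ell\le\lf\frac{r-1}{2}\rf$ whose coefficient series $\sum_u U_{\ell+1}(u)G_h(u)$ diverge, and you never say how these are pushed into $v_k^{(h)}$; (d) the pointwise bound $G_h(u)\le Cu^{h-r-3}$ decouples the $u$- and $y$-sums and is too lossy at the endpoints: for $h=r+1$ it makes $\sum_u u\,G_{r+1}(u)$ appear logarithmically divergent, whereas the double sum $\sum_{u,y}u\,y^{r+1}\pr(X=-(y+u))\le C\,\e|X|^{r+3}$ is finite, which is what the $h=r+1$ case of the lemma actually rests on.

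The device you relegate to the final sentence --- ``applying \eqref{eq:locprobtau} with an intermediate parameter'' --- is not a patch for small $h$; it is the entire proof in the paper, applied uniformly for every $h$. One invokes \eqref{eq:locprobtau} with parameter $r-h+1$ in place of $r$: the expansion then has exactly the $\lf\frac{r-h}{2}\rf+1$ terms of the lemma, its remainder is already $O\bigl((1+u)^{r-h+2}k^{-(r-h+4)/2}\log^{\lf\frac{r-h+1}{2}\rf}k\bigr)$ with the correct power of $k$ \emph{and} the correct log exponent, and both the coefficients and $v_k^{(h)}$ are closed by a single double-sum computation with moment budget $h+(r-h+2)+1=r+3$ --- no cutoff at $\sqrt{k}$, no pointwise bound on $G_h$. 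So you should promote that remark to the main argument and delete the splitting. One caveat that your worry about $h=0$ correctly points at: for $h=0$ the ``intermediate'' parameter is $r+1>r$, and \eqref{eq:locprobtau} at level $r+1$ is quoted in the paper under $\E|X|^{r+4}<\infty$, one moment more than Theorem~\ref{thm:lattice} assumes; this moment-counting subtlety sits in the paper's own proof of Lemma~\ref{lem:sumovery} and is not resolved by either route as written.
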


\begin{proof}
By the total probability law,
\begin{align} \label{eq:jumptonegative}
    \pr (S_k = -y, \tau =k) 
=
    \sum_{u=1}^\infty \pr(S_{k-1} 
=
    u, \tau > k-1)\pr(X=-(y+u)).
\end{align}
Applying \eqref{eq:locprobtau} with parameter $r-h+1$ instead of $r$,  we have
\begin{align} \label{eq:localtaudecomp}
    \pr(S_{k-1} = u, \tau > k-1)
= 
    \sum_{\ell= 1}^{\lf \frac{r-h}{2}\rf+1}
        \frac{U_\ell(u)}{k^{\ell+1/2}} + \rho_{k}^{(r-h+1)}(u), \quad k
    \ge 2
\end{align}
and
\begin{align} \label{eq:boundukz}
    |\rho_k^{(r-h+1)} (u)| \le C \frac{(1+u)^{r-h+2}}{k^{(r-h+4)/2}}\log^{\lfloor \frac{r-h+1}{2} \rfloor} k.
\end{align}

Plugging \eqref{eq:localtaudecomp} into \eqref{eq:jumptonegative}, we get
\begin{multline*}
    \sum_{y=0}^\infty
        \left(\frac{y}{\sigma}\right)^{h}
        \pr (S_k = -y, \tau = k)
\\
=
    \sum_{\ell = 0}^{\lf \frac{r-h}{2}\rf}
        \frac{1}{k^{\ell + 3/2}}
        \sum_{y=0}^{\infty}
            \sum_{u=1}^\infty
                \left(\frac{y}{\sigma}\right)^{h}
                U_{\ell+1}(u)
                \pr(X=-(y+u))
\\
+
    \sum_{y=0}^{\infty}
        \sum_{z=1}^\infty
            y^h
            \rho_k^{(r-h+1)}(u)
            \pr(X=-(y+u)).
\end{multline*}
Set 
\begin{align*}
    b_\ell^{(h)}
&:=
    \sum_{y=0}^{\infty}
    \sum_{u=1}^\infty
        \left(\frac{y}{\sigma}\right)^{h}
        U_{\ell+1}(u)\pr(X=-(y+u))\\
&=\frac{1}{\sigma^h}\sum_{u=1}^\infty
 U_{\ell+1}(u)\E[(-X-u)^h;-X>u].
\end{align*}
It follows from \eqref{eq:U-bound} that 
\begin{align*}
    b_\ell^{(h)}
 &\le
    C
    \sum_{y=0}^{\infty}
    \sum_{u=1}^\infty
        \left(\frac{y}{\sigma}\right)^{h}
        (1+u)^{2\ell+1}\pr(X=-(y+u))
\\
&=
    C
    \sum_{y=0}^\infty
        \left(\frac{y}{\sigma}\right)^{h}
        \e[(1-y-X)^{2\ell+1};-X>y]
\\
&\le
    C
    \sum_{y=0}^\infty
        \left(\frac{y}{\sigma}\right)^{h}
        \e\left[(1-X)^{2\ell+1};-X>y\right]
\\
&\le
    \frac{C}{h+1}
    \e[(1-X)^{2\ell+h+2};X\le0].
\end{align*}
Noting that $2\ell+h+2\le r+2$ and using our moment assumption $\e|X|^{r+3}<\infty$, we infer that the numbers $b_\ell^{(h)}$ are finite. Thus, we are left to estimate the remainder 
$$
    v_k^{(h)}
:=
    \sum_{y=0}^{\infty}
    \sum_{u=1}^\infty
        \left(\frac{y}{\sigma}\right)^{h}
        \rho_k^{(r-h+1)}(u)\pr(X=-(y+u)).
$$
It follows from \eqref{eq:boundukz} that 
\begin{align*}
    |v_k^{(h)}|
&\le  
    \frac{C\log^{\lfloor \frac{r-h+1}{2} \rfloor} k}
    {k^{(r-h+4)/2}}
    \sum_{y=0}^{\infty}
    \sum_{u=1}^\infty
        \left(\frac{y}{\sigma}\right)^{h}
        (1+u)^{r-h+2}\pr(X=-(y+u))\\
&\le
    \frac{C_1\log^{\lfloor \frac{r-h+1}{2} \rfloor} k}
    {k^{(r-h+4)/2}}
    \sum_{y=0}^\infty 
        \left(\frac{y}{\sigma}\right)^{h}
        \e\left[(1-X)^{r-h+2};-X>y\right]\\
&\le
    \frac{C_2\log^{\lfloor \frac{r-h+1}{2} \rfloor} k}
{k^{(r-h+4)/2}}\E[|X|^{r+3}].
\end{align*}
This completes the proof of the first claim.

By the definition of $\Theta_k^{(h)}(z)$,
\begin{equation*}
    \Theta_k^{(h)}-\Theta_k^{(h)}(z)
=
    \sum_{y>\sigma|z|/4}
        \left(\frac{y}{\sigma}\right)^{h}
        \pr(S_k=-y,\tau=k).
\end{equation*}
Applying now \eqref{eq:y-tau}, we have 
\begin{align*}
\Theta_k^{(h)}-\Theta_k^{(h)}(x)
&\le \frac{C}{k^{3/2}}
\sum_{y>|x|/4} y^h\e[-X,-X>y]\\
&\le \frac{C}{k^{3/2}}\e[(-X)^{h+1};-X>|x|/4].
\end{align*}
Applying the Markov inequality to the expectation on the right-hand side, we get the desired bound. 
\end{proof}
\begin{remark}
\label{rem:lemma18}
It is clear that \eqref{eq:locprobtau2} allows one to prove, repeating word for word the proof of Lemma~\ref{lem:sumovery}, that for every $h\le r+1$ one has 
$$
    \overline{\Theta}_k^{(h)}(z)
:=
    \sum_{0\le y\le\sigma|z|/4}
        \left(\frac{y}{\sigma}\right)^{h}
        \pr(S_k=-y,\overline{\tau} = k)
=
    \overline{\Theta}_k^{(h)}
+
    O\left(\frac{1}{k^{3/2}(1+|z|)^{r-h+2}}\right)
$$
and 
\begin{align*}
   \overline{\Theta}_{k}^{(h)}:
=
   \sum_{y=0}^\infty
        \left(\frac{y}{\sigma}\right)^{h}
        \pr(S_k=-y, \overline{\tau} = k)
=
    \sum_{\ell=0}^{\lf \frac{r-h}{2}\rf}
        \frac{\overline{b}_\ell^{(h)}}{k^{\ell+3/2}} 
        + \overline{v}^{(h)}_k,
\end{align*}
where
\begin{align*}
    |\overline{v}_k^{(h)}|
\le
    \frac{C\log^{\lfloor \frac{r-h+1}{2} \rfloor} k}{k^{(r-h+4)/2}}.
\end{align*}
The numbers $\overline{b}_\ell^{(h)}$ are given by the equality
$$
\overline{b}_\ell^{(h)}
=\frac{1}{\sigma^h}\sum_{u=1}^\infty
 \overline{U}_{\ell+1}(u)\E[(-X-u)^h;-X>u].
$$
This is the only change one needs to prove Theorem~\ref{thm:lattice2}.
\hfill$\diamond$
\end{remark}

It is easy to see that 
$$
    e^{-\frac{z^2}{2 (n-k)}}
    e^{-\theta\frac{z\y}{n-k}}
    e^{-\frac{\y^2}{2 (n-k)}}
\le
    e^{-\frac{z^2}{4(n-k)}}
$$
for all $z$, $\y\le|z|/4$ and all $\theta\in(0,1)$. Therefore,
\begin{align*}
    |R_{q,s}(z)| 
\le
    |z|^{q+r-2s+1}
    \sum_{k=2}^{n-1}
        e^{-\frac{z^2}{4(n-k)}}
        \frac{
        \sum_{y=0}^\infty
        \y^{r+1}\pr(S_k=-y, \tau = k)
        }{(n-k)^{j+r-s+3/2}},
\end{align*}
Applying Lemma \ref{lem:sumovery} with $h=r+1$, we then have
\begin{align}
\label{eq:Rqs-bound}
|R_{q,s}(z)| 
\le
    C
    |z|^{q+r-2s+1}
    \sum_{k=2}^{n-1}
        e^{-\frac{z^2}{4(n-k)}}
        \frac{1}{k^{3/2}(n-k)^{j+r-s+3/2}}.
\end{align} 
We split the sum on the right-hand side into two parts:
$k\le n/2$ and $k>n/2$. For the first part we have 
\begin{multline*}
    |z|^{q+r-2s+1}
    \sum_{k=2}^{\lf n/2\rf}
        e^{-\frac{z^2}{4 (n-k)}}
        \frac{1}{k^{3/2}(n-k)^{j+r-s+3/2}}
\\
\le
    |z|^{q+r-2s+1}
    e^{-\frac{z^2}{8 n}}
    \frac{C}{n^{j+r-s+3/2}}
    \sum_{k=2}^{\lf n/2\rf}
        \frac{1}{k^{3/2}}
\\
\le
    \frac{C}{n^{(r+2)/2 + (2j-q)/2}}\left(\frac{|z|}{\sqrt{n}}\right)^{q+r-2s+1}
    e^{-\frac{z^2}{8 n}}
\le 
    \frac{C}{n^{(r+2)/2}}.
\end{multline*}
In the last step we have used \eqref{eq:simple-ineq}
and the fact that $q\le 2j$.

To estimate the sum over $k \ge n/2$ we apply Lemma~\ref{lem:firsthalfsum} with $a = 3/2$,
$b = j+r-s+3/2$ and $t=0$ to obtain
\begin{multline*}
    |z|^{q+r-2s+1}
    \sum_{k=n/2}^{n-1}
        e^{-\frac{z^2}{4(n-k)}}
        \frac{1}{k^{3/2}(n-k)^{j+r-s+3/2}}\\
\le
    \frac{C|z|^{q+r-2s+1}}{n^{3/2}|z|^{2j+2r-2s+1}}
=
    \frac{C}{n^{3/2}|z|^{r+(2j-q)}}
\le
    \frac{C}{n^{3/2}(1+|z|^{r})}.
\end{multline*}
As a result we have from \eqref{eq:Rqs-bound}
$$
    |R_{q,s}(z)|
\le 
    \frac{C}{(\min(\sqrt{n},1+|z|))^{r+2}}.
$$
This implies that 
\begin{multline*}
    T_{j,q,s}(z) 
= 
    \sum_{\nu=0}^{r-s}
    \frac{(-1)^\nu}{\nu!}
    z^{q-s+\nu}
    \sum_{k=2}^{n-1}
        e^{-\frac{z^2}{2 (n-k)}}
        \sum_{y\le\sigma|z|/4}
            \y^{s+\nu}
            e^{-\frac{\y^2}{2(n-k)}}
            \frac{\pr(S_k=-y, \tau = k)}{(n-k)^{j+\nu+1/2}}
\\
+
    O\left(\frac{1}{(\min(\sqrt{n},1+|z|))^{r+2}}\right).
\end{multline*}
Applying \eqref{eq:exp-taylor} with $R=r-s-\nu+1$, we have 
$$
    e^{-\frac{\y^2}{2n(n-k)}}
=
    \sum_{\mu=0}^{\lf\frac{r-s-\nu}{2}\rf}
        \frac{h_{2\mu}(0)}{(2\mu)!}
        \frac{\y^{2\mu}}{(n-k)^\mu}
+
    O\left(\frac{\y^{r-s-\nu+1}}{(n-k)^{(r-s-\nu+1)/2}}\right).
$$
Therefore,
\begin{align*}
    &T_{j,q,s}(z)
=
    \sum_{k=2}^{n-1}
        e^{-\frac{z^2}{2 (n-k)}}
        \sum_{\nu+2\mu\le r-s}
                \frac{(-1)^\nu z^{q-s+\nu}h_{2\mu}(0)}{\nu!(2\mu)!
                (n-k)^{j+\nu+\mu+1/2}}
                \Theta_k^{(s+\nu+2\mu)}(z)
\\
&\hspace{1cm}
+
    O\left(
        \sum_{k=2}^{n-1}e^{-\frac{z^2}{2(n-k)}}
        \sum_{\nu=0}^{r-s}
            |z|^{q-s+\nu}
            \frac{\Theta_k^{(r+1)}}
            {(n-k)^{(r+2j+\nu-s+2)/2}}
    \right)
\\
&\hspace{1cm}
+
    O\left(\frac{1}{(\min(\sqrt{n},1+|z|))^{r+2}}\right),
\end{align*}
where
$$
    \Theta_k^{(t)}(z)
:=
    \sum_{y\le\sigma|z|/4}
    \left(\frac{y}{\sigma}\right)^t
    \pr(S_k=-y,\tau=k).
$$
According to Lemma~\ref{lem:sumovery} with $h=r+1$, 
$\Theta_k^{(r+1)}\le Ck^{-3/2}$. Therefore,
\begin{align*}
&
    \sum_{k=2}^{n-1}
        e^{-\frac{z^2}{2 (n-k)}}
        |z|^{q-s+\nu}
        \frac{\Theta_k^{(r+1)}}
        {(n-k)^{(r+2j+\nu-s+2)/2}}
\\
&\hspace{1cm}
\le
    C
    \sum_{k=2}^{n-1}
        \frac{|z|^{q-s+\nu}
        e^{-\frac{z^2}{2(n-k)}}
        }
        {k^{3/2}(n-k)^{(r+2j+\nu -s +2)/2}}
\\
&\hspace{1cm}
\le
    C
    \frac{|z|^{q-s+\nu}
    e^{-\frac{z^2}{2n}}}   
    {n^{(r+2j+\nu-s+2)/2}}
    \sum_{k=2}^{n/2}k^{-3/2}
+
    C
    \sum_{k=\lf\frac{n}{2}\rf+1}^{n-1}
        \frac
        {
            |z|^{q-s+\nu}e^{-\frac{z^2}{2 (n-k)}}
        }
        {
            k^{3/2}(n-k)^{(r+2j+\nu -s +2)/2}
        }.
\end{align*}
Applying Lemma~\ref{lem:firsthalfsum} to the second summand and \eqref{eq:simple-ineq} to the first one,
we have 
\begin{align*}
    \sum_{k=2}^{n-1}
        e^{-\frac{z^2}{2 (n-k)}}
        |z|^{q-s+\nu}
        \frac{\Theta_k^{(r+1)}}{(n-k)^{(r+2j+\nu-s+2)/2}}
\le
    \frac{C}{\min(\sqrt{n},1+|z|)^{r+2}}.
\end{align*}
Consequently,
\begin{multline*}
    T_{j,q,s}(z)
=
    \sum_{k=2}^{n-1}
    e^{-\frac{z^2}{2(n-k)}}
    \sum_{\nu+2\mu\le r-s}
            \frac{(-1)^\nu z^{q-s+\nu}h_{2\mu}(0)}{\nu!(2\mu)!(n-k)^{j+\nu+\mu+1/2}}
            \Theta_k^{(s+\nu+2\mu)}(z)
\\
+
    O\left(\frac{1}{(\min(\sqrt{n},1+|z|))^{r+2}}\right).
\end{multline*}
Using first the second claim in Lemma~\ref{lem:sumovery}
and applying then \eqref{eq:simple-ineq}, we obtain 
\begin{multline*}
    T_{j,q,s}(x)
=
    \sum_{k=2}^{n-1}
    e^{-\frac{z^2}{2(n-k)}}
    \sum_{\nu+2\mu\le r-s}
            \frac{(-1)^\nu z^{q-s+\nu}h_{2\mu}(0)}{\nu!(2\mu)!
            (n-k)^{j+\nu+\mu+1/2}}
            \Theta_k^{(s+\nu+2\mu)}
\\
+
    O\left(\frac{1}{(\min(\sqrt{n},1+|z|))^{r+2}}\right).
\end{multline*}
Using now the first part of Lemma~\ref{lem:sumovery}, we conclude that for some reals $\bar{c}_{\nu,\mu,s}$ it holds
\begin{multline*}
    T_{j,q,s}(z) 
=
    \sum_{\nu+2\mu+2\ell \le r-s}
        \frac{(-1)^\nu h_{2\mu}(0)}{\nu!(2\mu)!}
        b_{\ell}^{(s+\nu+2\mu)}
\sum_{k=2}^{n-1}
    \frac{z^{q-s+\nu}
    e^{-\frac{z^2}{2(n-k)}}
    }{
    k^{\ell+3/2}
    (n-k)^{j+\nu+\mu+1/2}
    }
\\
+
    \sum_{\nu+2\mu\le r-s}
        \bar{c}_{\nu,\mu,s}
        z^{q-s+\nu}
            \sum_{k=2}^{n-1}
                \frac{
                v_k^{(s+\nu+2\mu)}
                e^{-\frac{z^2}{2(n-k)}}
                }
                {
                (n-k)^{j+\nu+\mu+1/2}
                }
+
    O\left(\frac{1}{(\min(\sqrt{n},1+|z|))^{r+2}}\right),
\end{multline*}
where
\begin{align} \label{eq:vkestimation}
    |v_k^{(s+\nu+2\mu)}|
\le 
    \frac{C 
    \log^{
    \lf \frac{r-s-\nu-2\mu}{2} \rf
    }
    k
    }{
    k^{(r-s-\nu-2\mu)/2+2}
    }.
\end{align}
Substituting this decomposition of $T_{j,q,s}(z)$ into \eqref{eq:pxydecomp1}, we obtain
\begin{align} \label{eq:Pdecomp}
        \mathcal{P}_n(x)
=
    \Sigma_1(x/\sigma)
+
    \Sigma_2(x/\sigma)
+
    O\left(\frac{1}{(\min(\sqrt{n},1+|x|))^{r+2}}\right),
\end{align}
where 
\begin{multline} \label{eq:sigma1def}
    \Sigma_1(z)
=
    \sum_{j=0}^{2r+2}
    \sum_{q=0}^{\lf \frac{3j}{2}\rf}
    \sum_{s=0}^{q \wedge r}
    \sum_{\nu+2\mu+2\ell \le r-s}
            a_{q,j} 
            \binom{q}{s}
            \frac{(-1)^\nu h_{2\mu}(0)}{\nu!(2\mu)!}
            b_{\ell}^{(s+\nu+2\mu)}
\\                    
\times 
    \sum_{k=2}^{n-1}
        \frac{
        z^{q-s+\nu}
        }{k^{\ell+3/2}
        (n-k)^{j+\nu+\mu+1/2}}
        e^{-\frac{z^2}{2(n-k)}}
\end{multline}
and
\begin{align*}
    \Sigma_2(z)
=
    \sum_{j=0}^{2r+2}
    \sum_{q=0}^{\lf \frac{3j}{2}\rf}
    \sum_{s=0}^{q \wedge r}
    \sum_{\nu+2\mu\le r-s}
        z^{q-s+\nu}
        \sum_{k=2}^{n-1}
            \frac
            {
                v_k^{s+\nu+2\mu}
                e^{-\frac{z^2}{2(n-k)}}
            }
            {
                (n-k)^{j+\nu+\mu+1/2}
            }.
\end{align*}
We next derive a decomposition for remainders collected in $\Sigma_2(x)$.
\begin{lemma}
\label{lem:restvk} 
For every tuple $(j,q,s,\nu,\mu)$ satisfying $s+\nu+2\mu \le r+3$, $q\le 2j$ there exist polynomials $H_\eta$ of degree $\eta$, 
$\eta\le\lf \frac{r+3}{2} \rf$ such that 
\begin{multline*}
    z^{q-s+\nu}
    \sum_{k=2}^{n-1}
        \frac{
        v_k^{(s+\nu+2\mu)}
        e^{-\frac{z^2}{2(n-k)}}
        }
        {
        (n-k)^{j+\nu+\mu+1/2}
        }
=
    \frac{z^{q-s+\nu}}{n^{j+\nu+\mu+1/2}}
    e^{-\frac{z^2}{2n}}
    \sum_{\eta=0}^{\lf \frac{r+3}{2} \rf}
        \frac{H_\eta(z^2/n)}{n^{\eta}}
\\
+
    O\left(\frac{1}{(\min(\sqrt{n},1+|z|))^{r+2}}\right).
\end{multline*}
\end{lemma}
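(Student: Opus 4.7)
The plan is to split the sum at $k=\lf n/2\rf$ and handle each half separately. On $k\in[2,\lf n/2\rf]$ the factor $(n-k)^{-(j+\nu+\mu+1/2)}$ is a smooth perturbation of $n^{-(j+\nu+\mu+1/2)}$ and $e^{-z^2/(2(n-k))}$ is close to $e^{-z^2/(2n)}$, so Lemma~\ref{lem:secondhalfsum} is tailor-made to produce the advertised asymptotic expansion. On the complementary range $k>\lf n/2\rf$, the decay of $v_k^{(h)}$ dominates and the whole contribution can be absorbed into the error term.

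For the first half, apply Lemma~\ref{lem:secondhalfsum} with the parameter choices $m=q-s+\nu$, $b=j+\nu+\mu+\tfrac12$, $a=(r-s-\nu-2\mu+4)/2$ and logarithmic exponent $d=\lf (r-s-\nu-2\mu+1)/2\rf$, as dictated by \eqref{eq:vkestimation}. The main term takes exactly the shape $\frac{z^{q-s+\nu}}{n^{j+\nu+\mu+1/2}}e^{-z^2/(2n)}\sum_{\eta=0}^{\lf a\rf-1}H_\eta(z^2/n)/n^\eta$ with $\deg H_\eta=\eta$; checking both parities of $r$ one sees $\lf a\rf-1\le\lf(r+3)/2\rf$, so padding with zero polynomials if necessary extends the sum to the claimed upper index. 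A short algebraic computation gives $a+b-m/2-1=(r+3)/2+j-q/2$, so using $q\le 2j$ the error produced by Lemma~\ref{lem:secondhalfsum} is $O(\log^{d+\delta_a} n\cdot n^{-(r+3)/2})$; absorbing the logarithm into the surplus $\sqrt n$ factor, this is $O(\min(\sqrt n,1+|z|)^{-(r+2)})$.

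For the second half, on $k>\lf n/2\rf$ the uniform bound $|v_k^{(s+\nu+2\mu)}|\le C\log^{d} n\cdot n^{-(r-s-\nu-2\mu+4)/2}$ holds. After the change of variables $u=n-k$ the second-half contribution is at most
\[
\frac{C\log^{d} n\cdot|z|^{q-s+\nu}}{n^{(r-s-\nu-2\mu+4)/2}}\sum_{u=1}^{\lf n/2\rf}\frac{e^{-z^2/(2u)}}{u^{j+\nu+\mu+1/2}}.
\]
When $j+\nu+\mu\ge 1$ the sum is bounded by $C\min(1,|z|^{-(2j+2\nu+2\mu-1)})$, which follows from the same change of variable used in Lemma~\ref{prop:integral}. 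Splitting into the regimes $|z|\le\log^d n$ and $|z|>\log^d n$ and using the identity $E+F=2j-q+r+3\ge r+3$ for the exponents $E$ of $|z|$ and $F/2$ of $n$, together with the crude inequality $|z|^E n^{F/2}\ge\min(|z|,\sqrt n)^{E+F}$, one checks that the resulting bound is $O(\min(\sqrt n,1+|z|)^{-(r+2)})$ in both regimes.

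The main obstacle is the corner case $j=\nu=\mu=0$, where $b=1/2$ and the tail bound from Lemma~\ref{lem:firsthalfsum} breaks down because the reference integral $\int u^{-1/2}e^{-z^2/(2u)}\,du$ is not integrable near $0$. Here the constraint $q\le 2j=0$ forces $q=s=0$, so the prefactor $|z|^{q-s+\nu}$ is $1$, and the same change of variable applied to the integral approximation of the sum shows $\sum_{u=1}^{\lf n/2\rf}u^{-1/2}e^{-z^2/(2u)}=O(\sqrt n)$ uniformly in $z$. Combined with the $n^{-(r+4)/2}$ factor this yields $O(\log^d n\cdot n^{-(r+3)/2})$, again absorbed into the claimed error. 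Combining the two halves finishes the proof.
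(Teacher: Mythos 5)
Your proof follows the paper's argument essentially verbatim: the same split at $k=\lfloor n/2\rfloor$, the same application of Lemma~\ref{lem:secondhalfsum} with $a=(r-s-\nu-2\mu+4)/2$, $b=j+\nu+\mu+1/2$, $m=q-s+\nu$ (yielding the identical exponent $a+b-m/2-1=(r+3)/2+j-q/2$), and the same use of the decay bound \eqref{eq:vkestimation} combined with a Lemma~\ref{lem:firsthalfsum}-type estimate on the tail $k>\lfloor n/2\rfloor$. The only difference is that you explicitly isolate the corner case $j=\nu=\mu=0$, where $b=1/2$ falls outside the hypothesis $b>1$ of Lemma~\ref{lem:firsthalfsum}; the paper passes over this silently, so your extra care is welcome but does not change the route.
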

\begin{proof}
To bound the sum over $k\ge n/2$ we apply \eqref{eq:vkestimation} and Lemma~\ref{lem:firsthalfsum} to get
\begin{align*}
    \left|z^{q-s+\nu}
        \sum_{k=\lf n/2\rf +1}^{n-1}
            \frac{
            v_k^{(s+\nu+2\mu)}
            e^{-\frac{z^2}{2(n-k)}}
            }
            {
            (n-k)^{j+\nu+\mu+1/2}
            }
    \right|
\le
    \frac{C\log^{\lf \frac{r-s-\nu-2\mu}{2} \rf}n
    |z|^{q-s+\nu}}{n^{(r-s-\nu-2\mu+4)/2}
    |z|^{2j+2\nu+2\mu-1}}.
\end{align*}
Recalling that $q\le 2j$ and noting that any power of 
$\log n$ is of smaller order than $\sqrt{n}$ we conclude that 
\begin{align}
\label{eq:lem:rem3:0}
\nonumber
\left|
    z^{q-s+\nu}
    \sum_{k=\lf n/2\rf +1}^{n-1}
        \frac{v_k^{(s+\nu+2\mu)}
        e^{-\frac{z^2}{2(n-k)}}}
        {(n-k)^{j+\nu+\mu+1/2}}
\right|
&\le
    \frac{C}{n^{(r-s-\nu-2\mu+3)/2}|z|^{s+\nu+2\mu-1}}\\
&=
    O\left(\frac{1}{(\min(\sqrt{n},1+|z|))^{r+2}}\right).
\end{align}
In the last step we have used inequality $s+\nu+2\mu \le r+3$. 

To study the sum over $k\le n/2$ we apply Lemma~\ref{lem:secondhalfsum} with 
\begin{align*}
    a=\frac{r-s-\nu-2\mu+4}{2} \quad \text{and} \quad  b=j+\nu+\mu+1/2.
\end{align*}
As a result we have
\begin{align*}
&
    z^{q-s+\nu}
    \sum_{k=2}^{\lf n/2\rf}
        \frac{v_k^{(s+\nu+2\mu)}
        e^{-\frac{z^2}{2(n-k)}}}
        {(n-k)^{j+\nu+\mu+1/2}}
\\
&\hspace{1cm}
=
    \frac{z^{q-s+\nu}}{n^{j+\nu+\mu+1/2}}
    e^{-\frac{z^2}{2 n}}
    \sum_{\eta=0}^{\lf a \rf-1}
        \frac{H_\eta(z^2/n)}{n^{\eta}}
+
    O\left(\frac{\log^{{
    \lf \frac{r-s-\nu-2\mu}{2} \rf
    }+\delta_a} n}{n^{(r+2j-q+3)/2}}\right).
\end{align*}
If $\lf a\rf-1$ is strictly smaller than $\lf \frac{r+3}{2} \rf$ then we can still write the sum up to $\lf \frac{r+3}{2} \rf$. Then, due to $q\le2j$, we have
\begin{multline}\label{eq:lem:rem3:1}
    z^{q-s+\nu}
    \sum_{k=2}^{\lf n/2\rf}
        \frac{
        v_k^{(s+\nu+2\mu)}
        e^{-\frac{z^2}{2(n-k)}}
        }
        {
        (n-k)^{j+\nu+\mu+1/2}
        }
\\
=
    \frac{z^{q-s+\nu}}
    {n^{j+\nu+\mu+1/2}}
    e^{-\frac{z^2}{2 n}}
    \sum_{\eta=0}^{\lf \frac{r+3}{2} \rf}
        \frac{H_\eta(z^2/n)}{n^{\eta}}
+
    O\left(\frac{1}{n^{(r+2)/2}}\right).
\end{multline}
Combining \eqref{eq:lem:rem3:0} and \eqref{eq:lem:rem3:1} we get the desired equality.
\end{proof}
Applying Lemma \ref{lem:restvk} to every summand 
in $\Sigma_2(z)$, we get
\begin{multline*}
    \Sigma_2(z)
=    
    e^{-\frac{z^2}{2n}}
    \sum_{j=0}^{2r+2}
    \sum_{q=0}^{\lf \frac{3j}{2}\rf}
    \sum_{s=0}^{q \wedge r}
    \sum_{\nu+2\mu\le r-s}
    \sum_{\eta=0}^{\lf \frac{r+3}{2}\rf}
        \frac{z^{q-s+\nu}Q_\eta(z^2/n)}{n^{j+\nu+\mu+\eta+1/2}}
\\
+
    O\left(\frac{1}{(\min(\sqrt{n},1+|z|))^{r+2}}\right).
\end{multline*}
Noting that all monomials in this sum are of the form
$\frac{z^\alpha}{n^{\beta+1/2}}$ with $\alpha \le 2\beta$,
we can rewrite the previous expression for $\Sigma_2(x)$ as follows
\begin{align} \label{eq:sigma2}
    \Sigma_2(z)
=
    e^{-\frac{z^2}{2n}}
    \sum_{\nu=1}^{r+2}
        \frac{P_\nu^{(3)}\big(z/\sqrt{n}\big)}{n^{\nu/2}}
+
    O\left(\frac{1}{(\min(\sqrt{n},1+|z|))^{r+2}}\right),
\end{align}
where $P_\nu^{(3)}$ are some polynomials.

Recall the formula for $\Sigma_1$ and substitute the exact expression \eqref{eq:mucoefficients} for $h_{2\mu}(x)$:
\begin{multline*}
    \Sigma_1(z)
=
    \sum_{j=0}^{2r+2}
    \sum_{q=0}^{\lf \frac{3j}{2}\rf}
    \sum_{s=0}^{q \wedge r}
    \sum_{\nu+2\mu+2\ell \le r-s}
            a_{q,j} 
            \binom{q}{s}
            \frac{(-1)^{\nu+\mu}}{2^\mu\nu!\mu!}
            b_{\ell}^{(s+\nu+2\mu)}
\\                    
\times 
    \sum_{k=2}^{n-1}
        \frac{
        z^{q-s+\nu}
        }{k^{\ell+3/2}
        (n-k)^{j+\nu+\mu+1/2}}
        e^{-\frac{z^2}{2(n-k)}}.
\end{multline*}
Applying Proposition~\ref{prop:basisconvolution} to every single sum $\sum_{k=2}^{n-1}$ and using additionally Lemma~\ref{lem:qjellsmall} to all terms that contains $Q_{a,b,c}$ with $2(a+b)-c\ge r+2$, we obtain
\begin{multline}\label{eq:sigma1decomp}
    \Sigma_1(z)
=
    {\rm sgn}(z) e^{-\frac{z^2}{2n}}
    \sum_{\eta=2}^{r+1}
        \frac{Q_{\eta}(z/\sqrt{n})}{n^{\eta/2}}
+
    e^{-\frac{z^2}{2n}}
    \sum_{\nu=1}^{r+1}
    \frac{P_{\nu}^{(4)}(z/\sqrt{n})}{n^{\nu/2}}
\\
+
    O\left(\frac{1}{(\min(\sqrt{n},1+|z|))^{r+2}}\right),
\end{multline}
where $P_{\nu}^{(4)}$ are some polynomials. It is clear from the Proposition~\ref{prop:basisconvolution} that $Q_{a,b,c}$ is a summand of $Q_\eta$ if and only if $2(a+b)-c=\eta$. Therefore for $\eta = 1, 2,\dots,r+1$ one has
\begin{align}\label{eq:qnu}
    Q_\eta
= 
    \sqrt{2\pi}
    \sum_
    {
    \begin{smallmatrix}
    j,q,s,\nu,\mu,\ell \in \mathbb{Z}_{\ge0}\\
    s\le q\le \lf \frac{3j}{2} \rf\\
    \eta-2=2(j+s+\mu+\ell)+\nu-q
    \end{smallmatrix}
    }
        a_{q,j} \binom{q}{s}\frac{(-1)^\nu h_{2\mu}(0)}{\nu!(2\mu)!}
        b_{\ell}^{(s+\nu+2\mu)}
        Q_{
        \ell+1, j+\nu+\mu, q-s+\nu
        }.
\end{align}
Substituting \eqref{eq:sigma1decomp} and \eqref{eq:sigma2} into \eqref{eq:Pdecomp} and then into \eqref{eq:maindecomp1} we finally obtain
\begin{multline} \label{eq:maindecomp5}
    \pr(S_n = x, \tau < n)
=
    {\rm sgn}(z) e^{-\frac{z^2}{2n}}
    \sum_{\nu=2}^{r+1}
        \frac{Q_{\nu}(z/\sqrt{n})}{n^{\nu/2}}
+
    e^{-\frac{z^2}{2n}}
    \sum_{\nu=1}^{r+1}
    \frac{P_{\nu}^{(5)}(z/\sqrt{n})}{n^{\nu/2}}
\\
+
    O\left(\frac{1}{(\min(\sqrt{n},1+|x|))^{r+2}}\right),
\end{multline}
where as usual $z = x/\sigma$.

\textcolor{blue}{Combining this expansion for $\pr(S_n = x, \tau < n)$
with known expansions for $\pr(S_n = x)$ we get an expansion for $\pr(S_n = x, \tau > n)$, but the result will differ from the claim in Theorem~\ref{thm:lattice}. To finish the proof of Theorem~\ref{thm:lattice} we need to show that the functions $Q_\nu$ are polynomials. This will be done in the next section. Furthermore, we have to determine degrees of polynomials $P_\nu$. This final part of the proof is performed in Lemma~\ref{lem:degree}.}
\section{Simplification of the expansion.}
In the previous section we have derived an expansion
for $\pr(S_n=x,\tau<n)$. One can plug this into \eqref{eq:reflection}
and obtain, after expanding $\pr(S_n=x)$, an expansion for $\pr(S_n=x,\tau>n)$. But this expansion would formally differ from our claim in Theorem~\ref{thm:lattice}. More precisely, at the moment we have only shown that the functions $Q_\nu$ are polynomials in $z/\sqrt{n}$ and 
$\sqrt{n}/z$, but in Theorem~\ref{thm:lattice} we have only polynomials in $z/\sqrt{n}$. A further weakness of \eqref{eq:maindecomp5} is the fact that we can not compute the polynomials $P_\nu^{(5)}$, we have only their existence. To obtain a more practical expression we shall use the fact that \eqref{eq:maindecomp5} is valid for {\it all} $x<0$.

If $x$ is non-positive then 
$$
\pr(S_n=x)=\pr(S_n=x,\tau\le n)=
\pr(S_n=x,\tau<n)+\pr(S_n=x,\tau=n).
$$
Equivalently,
\begin{equation}
\label{eq:x_negativ}
\pr(S_n=x,\tau<n)=\pr(S_n=x)-\pr(S_n=x,\tau=n),
\quad x\le0.
\end{equation}
This equality allows us to compare asymptotic expansions for 
$\pr(S_n=x)$ and $\pr(S_n=x,\tau<n)$ and to gain some additional information on $Q_u$ and $P_\nu^{(5)}$.

To compare the expansion for $\pr(S_n=x,\tau<n)$ and 
$\pr(S_n=x)$, we have to estimate $\pr(S_n=x,\tau=n)$.
We know from \eqref{eq:y-tau} that 
$$
\pr(S_n=x,\tau=n)\le C\frac{\e[-X;-X>|x|]}{n^{3/2}}.
$$
Due to the assumption $\e|X|^{r+3}<\infty$,
$$
\e[-X;-X>x]\le C\frac{1}{1+|x|^{r+2}}.
$$
Thus, 
$$
\pr(S_n=x,\tau=n)\le C\frac{1}{n^{3/2}(1+|x|^{r+2})}.
$$
Plugging this into \eqref{eq:x_negativ}, we have 
\begin{equation}
\label{eq:x_negativ2}
\pr(S_n=x,\tau<n)=\pr(S_n=x)
+O\left(\frac{1}{n^{3/2}(1+|x|^{r+2})}\right).
\end{equation}
We know from \eqref{eq:asymp4} that there exist polynomials $\widehat{q}_\nu$ such that with $z=x/\sigma$ it holds
\begin{align}
\label{eq:uncond}
    \pr(S_n=x)
 =
    e^{-\frac{z^2}{2n}}
    \sum_{\nu=1}^{r+2}
        \frac{\widehat{q}_\nu(z/\sqrt{n})}{n^{\nu/2}}
+
    o\left(\frac{1}{n^{(r+2)/2}}\right)
\end{align}
uniformly in $x$.
Plugging this into \eqref{eq:x_negativ2}, we have 
\begin{align*}
    \pr(S_n=x,\tau<n)
=
    e^{-\frac{z^2}{2n}}
    \sum_{\nu=1}^{r+2}
        \frac{\widehat{q}_\nu(z/\sqrt{n})}{n^{\nu/2}}
+
    o\left(\frac{1}{n^{(r+2)/2}}\right)
+
    O\left(\frac{1}{n^{3/2}(1+|x|^{r+2})}\right)
\end{align*}
for all $x\le0$. Comparing this with \eqref{eq:maindecomp5}, we conclude that
\begin{align*}
    e^{-\frac{z^2}{2n}}
    \sum_{\nu=1}^{r+2}
        \frac{\widehat{q}_\nu(z/\sqrt{n})}{n^{\nu/2}}
+
    o\left(\frac{1}{n^{(r+2)/2}}\right)
+
    O\left(\frac{1}{(\min(\sqrt{n},1+|x|))^{r+2}}\right)\\
=
-
    e^{-\frac{z^2}{2n}}
    \sum_{\nu=2}^{r+1}
        \frac{Q_{\nu}(z/\sqrt{n})}{n^{\nu/2}}
+
    e^{-\frac{z^2}{2n}}
    \sum_{\nu=1}^{r+1}
        \frac{P_{\nu}^{(5)}(z/\sqrt{n})}{n^{\nu/2}}
\end{align*}
for all $x\le0$.
Letting here $x=u\sqrt{n}/\sigma$ with some $u<0$, we have 
\begin{align*}
    \sum_{\nu=1}^{r+2}
        \frac{\widehat{q}_\nu(u)}{n^{\nu/2}}
+
    O\left(\frac{1}{n^{(r+2)/2}}\right)
=
-
    \sum_{\nu=2}^{r+1}
    \frac{Q_{\nu}(u)}{n^{\nu/2}}
+
    \sum_{\nu=1}^{r+1}
        \frac{P_{\nu}^{(5)}(u)}{n^{\nu/2}}.
\end{align*}
Multiplying both sides by $\sqrt{n}$ and letting $n\to\infty$, we get 
$$
\widehat{q}_1(u)=P_1^{(5)}(u),\quad u<0.
$$
After cancellation of that terms we can multiply both sides by $n$ to obtain 
$$
\widehat{q}_2(u)=-Q_2(u)+P_2^{(5)}(u),\quad u<0.
$$
Repeating this procedure $r+1$ times, we have 
$$
\widehat{q}_\nu(u)=-Q_\nu(u)+P_\nu^{(5)}(u),\quad u<0
$$
for all $\nu\in\{2,3,\ldots,r+1\}$. Since $\widehat{q}_\nu$ and 
$P_\nu^{(5)}$ are polynomials, we infer that $Q_\nu$ are polynomials too. Furthermore, equality of polynomials on the negative half-axis implies the equality of polynomials on the whole real line.
Consequently,
\begin{align}
\label{eq:nu=1}
\widehat{q}_1(u)=P_1^{(5)}(u)
\end{align}
and 
\begin{align}
\label{eq:nu>1}
\widehat{q}_\nu(u)=-Q_\nu(u)+P_\nu^{(5)}(u),\quad 
\nu\in\{2,3,\ldots,r+1\}.
\end{align}
Applying \eqref{eq:uncond} and \eqref{eq:maindecomp5} to the corresponding terms on the right-hand side of \eqref{eq:reflection},
we obtain 
\begin{align*}
&
    \pr(S_n=x,\tau>n)
\\
&\hspace{1cm}
=
    e^{-\frac{z^2}{2n}}
    \left( 
        \sum_{\nu=1}^{r+1}
            \frac{\widehat{q}_\nu(z/\sqrt{n})}{n^{\nu/2}}
    -
        \sum_{\nu=2}^{r+1}
            \frac{Q_\nu(z/\sqrt{n})}{n^{\nu/2}}
    -
        \sum_{\nu=1}^{r+1}
            \frac{P^{(5)}_\nu(z/\sqrt{n})}{n^{\nu/2}}
    \right)
\\
&\hspace{3cm}
+
    O\left(\frac{1}{(\min(\sqrt{n},1+|x|))^{r+2}}\right).
\end{align*}
Combining this with the equalities \eqref{eq:nu=1} and \eqref{eq:nu>1}, we finally get with $t=\frac{x}{\sigma \sqrt{n}}$
\begin{align} \label{eq:maindecompsimplified}
\pr(S_n=x,\tau>n)
=
    -2e^{-\frac{t^2}{2}}
    \sum_{\eta=2}^{r+1}
        \frac{Q_\eta(t)}{n^{\eta/2}}
+
    O\left(\frac{1}{(\min(\sqrt{n},1+x))^{r+2}}\right).
\end{align}
This proves Theorem~\ref{thm:lattice} with $P_\nu(t)=-2Q_\nu(t)$,
$\nu=2,3,\ldots,r+1$
\section{Explicit expressions for $Q_2$ and $Q_3$: proof of Corollary~\ref{cor:r=2}}
In Section~\ref{sec:4} we have shown that the coefficients $Q_\nu$ of our expansion are given by the following equality:
\begin{align} \label{eq:qgeneral}
    Q_\eta
= 
    \sqrt{2\pi}
    \sum_
    {
    \begin{smallmatrix}
    j,q,s,\nu,\mu,\ell \in \mathbb{Z}_{\ge0}\\
    s\le q\le \lf \frac{3j}{2} \rf\\
    \eta-2=2(j+s+\mu+\ell)+\nu-q
    \end{smallmatrix}
    }
        a_{q,j} \binom{q}{s}\frac{(-1)^{\nu+\mu}}{2^\mu\nu!\mu!}
        b_{\ell}^{(s+\nu+2\mu)}
        Q_{\ell+1, j+\nu+\mu,q-s+\nu}.
\end{align}
We know all the elements in the sum:
\begin{itemize}
\item $a_{q,j}$ are the coefficients of the polynomials $P_j^{(0)}$
      which come from the asymptotic expansions for unconditioned probability $\pr(S_n=x)$, see Proposition~\ref{prop:locprob},
\item the numbers $b_{\ell}^{(s+\nu+2\mu)}$ are defined in    
      Lemma~\ref{lem:sumovery},  
\item for the functions $Q_{j,\ell,m}$ we have, see  Proposition~\ref{prop:basisconvolution}, the following representation:
\begin{align}\label{eq:qjellm} 
    Q_{j,\ell,m}(t)
=
    t^m
    \sum_{q=0}^{j}
        \gamma_{q,j,\ell} t^{2q}
        \sum_{k=0}^{\ell+j+q-1}
            \frac{(2k-1)!!
            \binom{\ell+j+q-1}{k}}
            {t^{2k+1}},
\end{align}      
\item reals $\gamma_{q,j,\ell}$ in the definition of $Q_{j,\ell,m}$ are given by (see Lemma~\ref{lem:gammas}):
\begin{align}
\label{eq:gammas1}
    \gamma_{q,j,\ell}
=
    \frac{(-1)^q2^j}{2^q(2j-1)!!}
    \sum
        \prod_{i=1}^{j-q}
            (\ell+2a_i-i-1/2)
\end{align}
where the sum is taken over all subsets $\{a_1, a_2, \dots, a_{j-q}\}$ of $\{1,2,\dots, j\}$, (if $j=q$ then we put the sum to be equal to $1$).
\end{itemize}
Thus, we are able to compute all functions $Q_\nu$.  It is also clear that the amount of calculations increases when $\eta$ grows. In this chapter we make all the calculations in the cases $\eta=2$ and $\eta=3$.
This will give the main term and one correction term in the expansion for
$\pr(S_n=x,\tau>n)$. In the course of our calculations we shall also see that all negative powers in $Q_2$ and $Q_3$ disappear. We already know that this happens, but we were using some indirect arguments. Thus, the calculations below can be seen as an additional explanation
of that effect.

Before we switch to particular cases $\eta=2$ and $\eta=3$ we determine the degree of every $\eta$.
\begin{lemma}
\label{lem:degree}
    It holds
\begin{align*}
    \deg Q_\eta = 3\eta - 5.
\end{align*}
\end{lemma}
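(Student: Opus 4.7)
The plan is to analyse the sum in \eqref{eq:qgeneral} summand by summand, compute the degree of each contribution as a function of the six summation indices, and identify the unique index tuple that realises the maximum.

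First, from the explicit representation \eqref{eq:qjellm} the highest power of $t$ in $Q_{j',\ell',m'}(t)$ is attained when the inner summation index equals $j'$ and the outer one equals $0$, giving the monomial $\gamma_{j',j',\ell'}\,t^{m'+2j'-1}$; in particular $\deg Q_{j',\ell',m'}=m'+2j'-1$. Applied to the factor $Q_{\ell+1,\,j+\nu+\mu,\,q-s+\nu}$ appearing in \eqref{eq:qgeneral} this yields degree $q-s+\nu+2\ell+1$, and eliminating $\nu$ via the constraint $\nu=q+\eta-2-2(j+s+\mu+\ell)$ rewrites it as
\begin{align*}
d(j,q,s,\mu,\ell)=2q-3s-2j-2\mu+\eta-1.
\end{align*}

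Next I would maximise $d$ over the admissible region. The negative coefficients of $s$ and $\mu$ force $s=\mu=0$ at the maximum, and then $\nu\ge 0$ becomes $q\ge 2(j+\ell)+2-\eta$, which combined with $q\le\lf 3j/2\rf$ is best served by $\ell=0$, $q=\lf 3j/2\rf$. For even $j$ one gets $d=j+\eta-1$ subject to $j\le 2(\eta-2)$, and for odd $j$ one gets $d=j+\eta-2$ subject to $j\le 2\eta-5$; in both cases $d\le 3\eta-5$, with equality attained \emph{uniquely} at
\begin{align*}
(j,q,s,\nu,\mu,\ell)=\bigl(2(\eta-2),\,3(\eta-2),\,0,\,0,\,0,\,0\bigr).
\end{align*}
This proves the upper bound $\deg Q_\eta\le 3\eta-5$.

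For the matching lower bound the uniqueness of the maximiser implies that the coefficient of $t^{3\eta-5}$ in $Q_\eta$ consists of one summand, a nonzero rational multiple of $a_{3(\eta-2),\,2(\eta-2)}\,b_0^{(0)}\,\gamma_{1,1,\,2(\eta-2)}$. A direct computation from \eqref{eq:gammas1} gives $\gamma_{1,1,\ell}=-1$, while $b_0^{(0)}$ equals the positive constant $\theta_0$ of Corollary~\ref{cor:r=2}. The entire question therefore reduces to the nonvanishing of $a_{3(\eta-2),\,2(\eta-2)}$, the leading coefficient of the Edgeworth polynomial $P_{2(\eta-2)}^{(0)}$.

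The main obstacle is exactly this last nonvanishing. Inspection of \eqref{eq:aqjviaA} and \eqref{eq:qpolydef} shows that the top-degree monomial of $P_{2(\eta-2)}^{(0)}$ comes exclusively from the partition $k_1=\eta-2$, $k_2=\cdots=k_{\eta-2}=0$ and is proportional to $m_3^{\eta-2}$. For $\eta=2$ the proportionality constant reduces to $1/(\sigma\sqrt{2\pi})\ne 0$ and the identity $\deg Q_2=1$ holds unconditionally; for $\eta\ge 3$ the equality $\deg Q_\eta=3\eta-5$ is to be read under the generic non-degeneracy $m_3\ne 0$, consistently with the role of $m_3$ in the explicit formula for $P_3$ in Corollary~\ref{cor:r=2}.
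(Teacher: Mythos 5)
Your upper-bound argument is essentially the paper's: both reduce $\deg Q_\eta$ to maximising the degree $q-s+\nu+2\ell+1$ of $Q_{\ell+1,\,j+\nu+\mu,\,q-s+\nu}$ over the index set, eliminate $\nu$ through the constraint $\eta-2=2(j+s+\mu+\ell)+\nu-q$, observe that $s$ and $\mu$ enter with negative coefficients, and use $q\le\lfloor 3j/2\rfloor$ to land on the bound $3\eta-5$ with the unique maximiser $(j,q)=(2(\eta-2),3(\eta-2))$, $s=\nu=\mu=\ell=0$. The paper packages this slightly differently (setting $p=\nu+2\ell$ and using $2j-q\ge q/3$), but the content is identical.

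Where you genuinely go beyond the paper is the lower bound. The paper's proof stops at ``it is a unique maximum, hence the degree is exactly $3\eta-5$,'' which tacitly assumes the single surviving coefficient is nonzero. You actually compute it: up to nonzero factors it is $a_{3(\eta-2),\,2(\eta-2)}\,b_0^{(0)}\,\gamma_{1,1,2(\eta-2)}$, and tracing $a_{3(\eta-2),\,2(\eta-2)}$ back through \eqref{eq:aqjviaA} and \eqref{eq:qpolydef} shows it is a nonzero multiple of $m_3^{\eta-2}$. Your conclusion is correct and exposes a real imprecision in the lemma as stated: for $\eta\ge 3$ the equality $\deg Q_\eta=3\eta-5$ (and hence the degree claim for $P_\nu$ in Theorems~\ref{thm:lattice} and~\ref{th:absolute}) holds only when $m_3\neq 0$; if the increment distribution is symmetric the degree drops. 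This is already visible in Corollary~\ref{cor:r=2}, where $P_3$ has degree $4$ precisely when $m_3\neq 0$. For $\eta=2$ your unconditional argument is fine, since $a_{0,0}=1/(\sigma\sqrt{2\pi})$ and $\theta_0=b_0^{(0)}>0$. So: correct proof, same optimisation for the upper bound, and a more honest (conditional) lower bound than the paper's own.
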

\begin{proof}
    By definition $\deg Q_{j,\ell,m} = m+2j-1$. So according to \eqref{eq:qjellm}
\begin{align*}
    \deg Q_\eta 
\le 
    \max\limits_{\eta-2=2(j+s+\mu+\ell)+\nu-q} 
    \big(
        q-s+\nu+2\ell+1
        \big).
\end{align*}
It is easy to see that if $\mu$ or $s$ are positive we can get a bigger value by letting them be zero and adding $\mu+s$ to $\ell$. Also we can denote $p:=\nu+2\ell$ then one has
\begin{align*}
    \deg Q_\eta 
\le 
    \max\limits_{\eta-2=p + 2j -q}
    (q+p+1).
\end{align*}
Since $q\le \lf \frac{3j}{2}\rf \le \frac{3j}{2}$ we have $2j-q \ge q/3$ hence $\xi + q/3 \le \eta-2$. So
\begin{align*}
    \deg Q_\eta 
\le 
    \max\limits_{\eta-2=p + 2j -q}
    (q+p+1)
\le
    \max\limits_{p +q/3 \le \eta-2}
    (q+p+1).
\end{align*}
Then obviously we have unique maximization of $q+p+1$ when $q=3(\eta-2), p = 0$. So
\begin{align*}
    \deg Q_\eta 
\le
    3(\eta-2) + 1 
=
    3\eta  - 5.
\end{align*}
The example of maximum realisation is $j=2(\eta-2),q=3(\eta-2), \mu=\nu=s=\ell=0$ and by construction it is a unique maximum hence the degree of $Q_\eta$ is exactly $3\eta-5$.
\end{proof}

To give exact expressions for $Q_2$ and $Q_3$ we need first to find all tuples $(\ell,j,s,q,\nu,\mu)$ such that 
$s\le q\le\lf\frac{3j}{2}\rf$ and 
\begin{align*}
    \eta-2=2(j+s+\mu+\ell)+\nu-q
\end{align*}
Note that for $\eta=2,3$ it holds $\eta-2=0,1$. Combining that with the restriction $q \le \lf \frac{3j}{2}\rf$ we infer that 
$\mu = s = \ell = 0$. Consequently, we are left to find all integer solution to the equation
\begin{align*}
    \eta - 2 = 2j - q + \nu,
    \quad 
    j,q,\nu \in \mathbb{Z}_{\ge 0}, \; \; \; q\le \lf \frac{3j}{2}\rf .
\end{align*}
If $\eta = 2$ then the only possible solution is $\nu=j=q=0$. In the case $\eta=3$ we have three solutions:
$$(0,2,3),\  (0,1,1)\text{ and } (1,0,0).$$
This implies that 
\begin{align}
\label{eq:Q2Q3}
\nonumber
    Q_2 
&=
    \sqrt{2\pi}a_{0,0} b_{0}^{(0)} Q_{1,0,0},
\\
    Q_3
&=
    \sqrt{2\pi}a_{3,2} b_{0}^{(0)} Q_{1,2,3} 
+
    \sqrt{2\pi}
    \big(
        a_{0,0}b_{0}^{(1)} 
    +
        a_{1,1}b_0^{(0)}
    \big) Q_{1,1,1}.
\end{align}
In the proof of Proposition~\ref{prop:locprob} we have defined 
$P_{0}^{(0)}(x) = 1/(\sigma\sqrt{2\pi})$. This implies that 
$a_{0,0} = 1/(\sigma\sqrt{2\pi})$. To find the values of $a_{1,1}$
and $a_{3,2}$ we first notice that, due to \eqref{eq:qpolydef}, 
\begin{align*}
   \widehat{q}_1(t) 
=
   \frac{m_3}{{6}\sqrt{2\pi}\sigma^3}H_3(t)
=
    \frac{m_3}{{6}\sqrt{2\pi}\sigma^3}(t^3-3t),
\end{align*}
where $m_3$ is the third central moment and $H_3$ is the third Hermite polynomial. Combining this with \eqref{eq:qpolycoeff}, we get 
$$
A_{1,0}=\frac{m_3}{6\sigma^3\sqrt{2\pi}}
\quad\text{and}\quad 
A_{1,1}=-\frac{m_3}{2\sigma^3\sqrt{2\pi}}.
$$
Noting that \eqref{eq:aqjviaA} implies that $a_{1,1}=A_{1,1}$ and 
$a_{3,2}=A_{1,0}$, we finally have
\begin{align}
\label{eq:a-coef}
a_{0,0}=\frac{1}{\sigma\sqrt{2\pi}},\quad
a_{1,1} =-\frac{m_3}{2\sqrt{2\pi}\sigma^3}
\quad\text{and}\quad
a_{3,2} =\frac{m_3}{6\sqrt{2\pi}\sigma^3}.
\end{align}

Substituting values of $a_{q,j}$ from \eqref{eq:a-coef} into \eqref{eq:Q2Q3} and writing, for brevity, $\theta_0$ and $\theta_1$
instead of $b_0^{(0)}$ and $b_0^{(1)}$ we have
\begin{align}
\label{eq:Q2Q3-1}
\nonumber
    &Q_2(t)
=
    \frac{\theta_0}{\sigma}Q_{1,0,0}(t)\\
    &Q_3(t)
=
    \frac{\theta_0m_3}{6\sigma^3}Q_{1,2,3}(t)
+
    \left(\frac{\theta_1}{\sigma}-\frac{\theta_0m_3}{2\sigma^3}\right)
    Q_{1,1,1}(t).
\end{align}
Thus, it remains to determine the functions $Q_{j,\ell,m}$ appearing in \eqref{eq:Q2Q3-1}. In view of \eqref{eq:qjellm} we only need to determine the corresponding numbers $\gamma_{q,j,\ell}$, for which we have \eqref{eq:gammas1}. Noting that $\gamma_{0,1,0} = 1$ and 
$\gamma_{1,1,0} = -1$, we get
\begin{align*}
    Q_{1,0,0} (t)
=
    \gamma_{0,1,0}\frac{1}{t}
+
    \gamma_{1,1,0}
    t^2
    \left(
        \frac{1}{t}
    +
        \frac{1}{t^3}
    \right)
=
    -t.
\end{align*}
Noting next that $\gamma_{0,1,1} = 3, \gamma_{1,1,1} = -1$ we have
\begin{align*}
    Q_{1,1,1}(t) 
&=
    t\gamma_{0,1,1}
    \left(
        \frac{1}{t}
    +
        \frac{1}{t^3}
    \right)
+
    t\gamma_{1,1,1}t^2
    \left(
        \frac{1}{t}
    +
        \frac{2}{t^3}
    +
        \frac{3}{t^5}
    \right)
\\
&=
    3+\frac{3}{t^2}-t^2-2-\frac{3}{t^2}
=
    1-t^2.
\end{align*}
Finally, the equalities $\gamma_{0,1,2} = 5, \gamma_{1,1,2} = -1$
imply that 
\begin{align*}
    Q_{1,2,3}(t) 
&=
    t^3\gamma_{0,1,2}
    \left(
        \frac{1}{t}
    +
        \frac{2}{t^3}
    +
        \frac{3}{t^5}
    \right)
+
    t^3\gamma_{1,1,2}t^2
    \left(
        \frac{1}{t}
    +
        \frac{3}{t^3}
    +
        \frac{9}{t^5}
    +
        \frac{15}{t^7}
    \right)
\\
&=
    5t^2+10+\frac{15}{t^2}-t^4-3t^3-9-\frac{15}{t^2} = -t^4+2t^2+1.
\end{align*}
Here we see that all negative power of $t$ indeed disappear. This can be seen as a confirmation of our general argument in the previous section.

Substituting these representations for $Q_{1,0,0}$, $Q_{1,1,1}$ and 
$Q_{1,2,3}$ into \eqref{eq:Q2Q3-1}, we finally obtain 
$$
Q_2(t)=-\frac{\theta_0}{\sigma}t
$$
and
\begin{align*}
    Q_3(t)
&=
    \frac{\theta_0m_3}{6\sigma^3}(1+2t^2-t^4)
+
    \left(\frac{\theta_1}{\sigma}-
    \frac{\theta_0m_3}{2\sigma^3}\right)
    (1-t^2)\\
&=
    \frac{\theta_0 m_3}{6\sigma^3}\left(-2+5t^2-t^4\right)
+
    \frac{\theta_1}{\sigma}(1-t^2). 
\end{align*}
Recalling that $P_\nu(t)=-2Q_\nu(t)$ for all $\nu$ completes the proof
of the corollary.

We conclude this section by discussing alternative representations for the numbers $\theta_0$ and $\theta_1$, see \eqref{eq:theta0} and \eqref{eq:theta1}. By the total probability law we have 
$$
\pr(\tau=k)=\sum_{z=1}^\infty\pr(S_{k-1}=z,\tau>k-1)\pr(X<-z).
$$
According to Theorem 5 from \cite{VW09},
$$
\lim_{k\to\infty}k^{3/2}\pr(S_{k-1}=z,\tau>k-1)=U_0(z)
$$
for every fixed $z$. Moreover, \eqref{eq:conc.f} implies that 
$$
k^{3/2}\pr(S_{k-1}=z,\tau>k-1)\pr(X<-z)\le Cz\pr(X<-z).
$$
Since the function on the right-hand side is summable, we may apply the dominated convergence theorem. As a result we have
$$
\lim_{k\to\infty} k^{3/2}\pr(\tau=k)
=\sum_{z=1}^\infty U_0(z)\pr(X<-z)=b_0^{(0)};
$$
the last equality follows from the definition of $b_0^{(0)}$ in the proof of Lemma~\ref{lem:sumovery}. Similar arguments show the validity
of \eqref{eq:theta1}.

\section{Comparison of decompositions}\label{sec:7}
Setting $r=2m-1$ in Theorem 3 of \cite{DTW23}, we have
\begin{align*}
    \pr(S_n = x, \tau > n) 
=
    \sum_{j=1}^m U_j(x)a_n^{(j+1)} 
+
    O\left(
    \frac{(1+x)^{2m}}{n^{m+1}}
    \log^{m-1} n
    \right)
\end{align*}
for some functions $U_j$. Changing the basis of the decomposition from $a_n^{(j+1)}$ to $n^{-j-1/2}$, one gets
\begin{align*}
    \pr(S_n = x, \tau > n) 
=
    \sum_{j=1}^m W_j(x)n^{-j-1/2} 
+
    O\left(
    \frac{(1+x)^{2m}}{n^{m+1}}
    \log^{m-1} n
    \right)
\end{align*}
where every $W_j$ is given by a linear combination of $U_1, U_2, \dots, U_j$. Due to Theorem 6 in \cite{DTW23}, functions $U_j$ are almost polynomials and hence the same is valid for functions $W_j$. More precisely there exist polynomials $Q_j(x)$, each of degree $2j-1$, such that, for some $\varepsilon > 0$,
\begin{align*}
    W_j(x) = Q_j(x) + O(e^{-\varepsilon x}).
\end{align*}
Fixing some $\alpha \in (1/4, 3/8)$ and letting 
$x = \sigma^{-1} n^{1/2-\alpha}$, we obtain
\begin{align}
    \pr(S_n = x, \tau > n) 
&=
    \sum_{j=1}^m 
        Q_j(n^{1/2-\alpha})n^{-j-1/2} 
+
    O\left(n^{-m/4}
    \right)\\
&= \nonumber
    \sum_{j=1}^m
    \sum_{k=0}^{2j-1}
        \sigma^{-k}
        [t^k]Q_j(t) n^{k/2-j-1/2 - \alpha k} 
+
    O\left(n^{-m/4}
    \right).
\end{align}
On the other hand, for the same $r$ and $x$ we have from Theorem \ref{thm:lattice}
\begin{align}
    \pr(S_n = x, \tau > n) 
&=
    e^{-\frac{1}{2}n^{-2\alpha}}
    \sum_{\nu=2}^{2m}
        P_\nu(n^{-\alpha}) n^{-\nu/2}
+
    O(n^{-m/4})\\
&= \nonumber
    \left(
    \sum_{\mu = 0}^{m}
        \frac{(-1)^\mu}{2^\mu \mu!}
        n^{-2\mu \alpha}
    \right)
    \left(
    \sum_{\nu=2}^{2m}
        P_\nu(n^{-\alpha}) n^{-\nu/2}
    \right)
+
    O(n^{-m/4}).
\end{align}
Taking any irrational $\alpha \in (1/4, 3/8)$ and assuming that $m$ is large enough, we can write equality for the coefficients in front of the same powers of $n$. This leads to the equality
\begin{align*}
    [t^k] Q_j(t)
=
    \sigma^k
    \sum_{2\mu + q = k}
        \frac{(-1)^\mu}{2^\mu \mu!}
        [t^q] P_{2j+1-k}(t),
\end{align*}
where the sum is taken over the nonnegative integer $q$ and $\mu$. Note that although polyharmonic functions $W_j(x)$ depend heavily on the whole  distribution of increments of our random walk, the polynomials $P_\nu$ depend only on some moments and cumulants of that distribution. Moreover, those calculations show that the polynomial part of $W_j$, and hence of $U_j$, determined by the polynomials $P_\nu$ with $\mu \le 2j+1$.

 
\end{document}